\newtheorem{theorem}{Theorem}[section]
\newtheorem{lemma}[theorem]{Lemma}
\newtheorem{corollary}[theorem]{Corollary}
\theoremstyle{definition}
\theoremstyle{remark}
\numberwithin{equation}{section}
\newcommand{\mmod}[1]{\,\,(\text{mod}\,\,#1)}
  \def\bff{{\mathbf f}}
\def\bfm{{\mathbf m}}
\def\bfn{{\mathbf n}}
\def\bfu{{\mathbf u}}
\def\bfv{{\mathbf v}}
\def\bfw{{\mathbf w}}
\def\bfx{{\mathbf x}}
\def\bfy{{\mathbf y}}
\def\bfz{{\mathbf z}}
\def\calB{{\mathcal B}} 
\def\calC{{\mathcal C}} 
\def\calD{{\mathcal D}}
\def\calJ{{\mathcal J}}
 \def\Ktil{{\widetilde K}}
\def\calN{{\mathcal N}}
\def\calR{{\mathcal R}}
\def\Gtil{\widetilde G}\def\Itil{\widetilde I}\def\Ktil{\widetilde K}
\def\dbC{{\mathbb C}}\def\dbN{{\mathbb N}}
\def\dbR{{\mathbb R}}
\def\dbZ{{\mathbb Z}}
\def\grf{{\mathfrak f}}\def\grF{{\mathfrak F}}
\def\grG{{\mathfrak G}}
\def\grm{{\mathfrak m}}\def\grM{{\mathfrak M}}
\def\grS{{\mathfrak S}}
\def\alp{{\alpha}} \def\bfalp{{\boldsymbol \alpha}}
\def\bet{{\beta}}  \def\bfbet{{\boldsymbol \beta}}
\def\gam{{\gamma}} \def\Gam{{\Gamma}}
\def\del{{\delta}} \def\Del{{\Delta}}
\def\zet{{\zeta}} \def\bfzet{{\boldsymbol \zeta}} 
\def\bfeta{{\boldsymbol \eta}} \def\Eta{{\mathrm H}}
\def\tet{{\theta}}  
\def\kap{{\kappa}}
\def\lam{{\lambda}}  
\def\bfnu{{\boldsymbol \nu}}
\def\bfxi{{\boldsymbol \xi}}
\def\sig{{\sigma}} \def\Sig{{\Sigma}} \def\bfsig{{\boldsymbol \sig}}
\def\bftau{{\boldsymbol \tau}}
\def\Ups{{\Upsilon}} 
\def\bfpsi{{\boldsymbol \psi}}
\def\ome{{\omega}} \def\Ome{{\Omega}}
\def\d{{\partial}}
\def\eps{\varepsilon}
\def\le{\leqslant} \def\ge{\geqslant}
\def\d{{\,{\rm d}}}
\begin{document}
\title[Vinogradov's mean value theorem]{Vinogradov's mean value theorem\\ via efficient congruencing, II}
\author[Trevor D. Wooley]{Trevor D. Wooley$^*$}
\address{School of Mathematics, University of Bristol, University Walk, Clifton, Bristol BS8 1TW, United Kingdom}
\email{matdw@bristol.ac.uk}
\thanks{$^*$Supported by a Royal Society Wolfson Research Merit Award.}
\subjclass[2010]{11L15, 11L07, 11P05, 11P55}
\keywords{Exponential sums, Waring's problem, Hardy-Littlewood method}
\date{}
\begin{abstract} We apply the efficient congruencing method to estimate Vinogradov's integral for moments of order $2s$, with $1\le s\le k^2-1$. Thereby, we show that quasi-diagonal behaviour holds when $s=o(k^2)$, we obtain near-optimal estimates for $1\le s\le \frac{1}{4}k^2+k$, and optimal estimates for $s\ge k^2-1$. In this way we come half way to proving the main conjecture in two different directions. There are consequences for estimates of Weyl type, and in several allied applications. Thus, for example, the anticipated asymptotic formula in Waring's problem is established for sums of $s$ $k$th powers of natural numbers whenever $s\ge 2k^2-2k-8$ $(k\ge 6)$.
\end{abstract}
\maketitle

\section{Introduction} Estimates stemming from Vinogradov's mean value theorem deliver bounds for exponential sums of large degree, both in mean and pointwise, beyond the competence of alternate approaches. The ubiquity of such exponential sums in analytic number theory, in the analysis for example of the Riemann zeta function, in Waring's problem, and beyond, accounts for the high profile of Vinogradov's methods in the associated literature. In recent work, we established a version of Vinogradov's mean value theorem which achieves an essentially optimal upper bound with a number of variables only twice the number conjectured to be best possible (see \cite{Woo2011a}). For systems of degree $k$, previous estimates missed such a bound by a factor of order $\log k$. Our earlier approach provides no upper bounds when the number of variables is smaller, precluding the possibility of applications involving the finer features of these mean values. Our goal in this paper is to remedy this deficiency, at the same time strengthening our previous conclusions. It transpires that we are able to come within a hair's breadth of proving the main conjecture concerning Vinogradov's mean value theorem in half of the basic interval of relevant moments. Such developments illustrate the flexibility of the new efficient congruencing method introduced in \cite{Woo2011a}.\par

We now introduce some notation. When $k\in \dbN$ and $\bfalp\in \dbR^k$, define
$$f_k(\bfalp;X)=\sum_{1\le x\le X}e(\alp_1x+\ldots +\alp_kx^k),$$
where $e(z)$ denotes $e^{2\pi iz}$. Our goal is to estimate the mean value
$$J_{s,k}(X)=\oint |f_k(\bfalp;X)|^{2s}\d\bfalp,$$
which by orthogonality counts the solutions of the Diophantine system
$$x_1^j+\ldots +x_s^j=y_1^j+\ldots +y_s^j\quad (1\le j\le k),$$
with $1\le \bfx,\bfy\le X$. Here and elsewhere, we employ the convention that whenever $G:[0,1)^k\rightarrow \dbC$ is measurable, then
$$\oint G(\bfalp)\d\bfalp =\int_{[0,1)^k}G(\bfalp)\d\bfalp .$$
In addition, we make slightly unconventional use of vector notation. Thus, for example, we may write $1\le \bfx\le X$ to denote that $1\le x_i\le X$ $(1\le i\le s)$.\par

We complete the proof of our basic estimate for $J_{s,k}(X)$ in \S8. Here and elsewhere, so far as implicit constants associated with Vinogradov's notation $\ll $ and $\gg $ are concerned, we suppress mention of dependence on $s$, $k$ and $\eps$.

\begin{theorem}\label{theorem1.1} Suppose that $s$ and $k$ are natural numbers with $k\ge 3$ and $s\ge k^2-1$. Then, for each $\eps>0$, one has $J_{s,k}(X)\ll X^{2s-\frac{1}{2}k(k+1)+\eps}$.
\end{theorem}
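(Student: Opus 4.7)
The plan is to adapt the efficient congruencing machinery of \cite{Woo2011a}, with parameter choices refined for the range $s\ge k^2-1$. Define $\lam^*_{s,k}$ to be the infimum of those real $\lam$ for which $J_{s,k}(X)\ll X^{\lam+\eps}$ holds for all $\eps>0$, and write $\lam^*_{s,k}=2s-\tfrac{1}{2}k(k+1)+\del_{s,k}$. The classical lower bound $J_{s,k}(X)\gg X^{2s-\frac{1}{2}k(k+1)}$ forces $\del_{s,k}\ge 0$, so it suffices to prove $\del_{s,k}\le 0$.

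First I would introduce auxiliary mean values $K_{a,b}(X;\xi,\eta;p)$ counting those solutions of the Vinogradov system in $[1,X]$ in which a distinguished pair of variables $(x,y)$ satisfies prescribed congruences $x\equiv\xi\pmod{p^a}$ and $y\equiv\eta\pmod{p^b}$, while the remaining $2s-2$ variables vary freely. Here $p$ is a prime of size $X^\tet$ for a parameter $\tet$ (roughly $1/(2k)$) to be chosen. A preliminary conditioning step --- combining Hölder's inequality with the pigeon-hole principle applied over residue classes modulo $p$ --- reduces the estimation of $J_{s,k}(X)$ to that of a representative $K_{0,1}(X;\xi,\eta;p)$, at the cost of an explicit power of $p$.

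The heart of the argument is the congruencing step. Exploiting the translation-invariance $x\mapsto x+\xi$ of the Vinogradov system modulo $p^{kb}$, together with Cauchy--Schwarz applied in a suitable dual variable, one promotes the congruence information from level $p^a$ up to level $p^{kb}$. After optimising the Hölder weights, this should yield an inequality of the shape
\begin{equation*}
K_{a,b}\ll p^{\gam_1}X^{\gam_2}\Bigl(\max_{\xi',\eta'}K_{b,kb}(X;\xi',\eta';p)\Bigr)^{\tau_1}J_{s,k}(X)^{\tau_2},
\end{equation*}
with non-negative weights satisfying $\tau_1+\tau_2\le 1$. Iterating along the sequence $(a_n,b_n)=(b_{n-1},kb_{n-1})$ for $N\asymp \log\log X$ generations --- continuing until $p^{b_N}>X$, at which point the congruence conditions become essentially determinative and the terminal auxiliary mean value admits only a trivial count --- one assembles a compounded inequality that, when compared with the definition of $\lam^*_{s,k}$, gives a recursion of the form $\del_{s,k}\ll k^{-N}$. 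Letting $N\to\infty$ forces $\del_{s,k}\le 0$.

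The main obstacle will be the congruencing step: one must arrange that the Hölder and Cauchy--Schwarz losses are precisely compensated by the sharpening of congruence information, so that the recursion closes at the conjecturally optimal exponent $\tfrac{1}{2}k(k+1)$ rather than at some larger value. The hypothesis $s\ge k^2-1$ enters critically here, through a Vandermonde-type counting estimate for $s$-tuples whose power sums lie in prescribed residue classes modulo $p^{kb}$: only when $s$ exceeds roughly $k^2$ does the elementary symmetric structure yield the clean factor of $p^{-k(k+1)b/2}$ required to propagate the efficiency across all $N$ iterations without cumulative loss.
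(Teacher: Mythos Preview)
Your sketch captures the broad shape of efficient congruencing, but the auxiliary mean values you describe are structurally too weak to reach $s\ge k^2-1$, and your account of where that threshold enters is mistaken. The paper's $K_{a,b}(X)$ is not built from a single distinguished pair $(x,y)$ with the remaining $2s-2$ variables free: it carries $r$ variables on each side constrained to lie in \emph{distinct} residue classes modulo $p^{a+1}$ (all agreeing modulo $p^a$), together with $2s=2ru$ further variables conditioned modulo $p^{b+1}$; the mean value under study is $J_{s+r}$, not $J_s$. The congruencing step (Lemmas \ref{lemma3.3} and \ref{lemma5.1}) exploits an $r\times r$ Vandermonde determinant arising from those $r$ distinct classes to bound the number of lifts of the distinguished $r$-tuple from modulus $p^a$ to $p^{kb}$. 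With a single distinguished pair there is no such non-singularity to exploit, and the H\"older--Cauchy bookkeeping in your displayed inequality cannot be balanced at the optimal exponent; what you have written is, at best, a caricature of the argument of \cite{Woo2011a}, which reached only $s\ge k(k+1)$.

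The threshold $k^2-1$ is not a consequence of any counting lemma for $s$-tuples requiring $s\gtrsim k^2$. It arises from the exponent bookkeeping in the iteration (Lemma \ref{lemma8.2}): the recursion closes at $\kap=\tfrac{1}{2}k(k+1)$ precisely when one takes $r=k-1$ in the formula $\kap_{s+r}=(rk-\tfrac{1}{2}r(r+1))(k+1)/(k-1)$ of (\ref{2.6}), and the method then applies to $J_{s_0+r}$ with $s_0=rk$, giving $s_0+r=(k-1)(k+1)=k^2-1$. The improvement from $k(k+1)$ to $k^2-1$ hinges on the freedom to take $r<k$ distinguished variables and on the refined congruence count (Lemma \ref{lemma3.3}, via the elimination identity of Lemma \ref{lemma3.2}) that this choice permits; a single-pair setup discards exactly this flexibility, so your proposal as written cannot deliver the stated range.
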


Prior to the introduction of the efficient congruencing method, conclusions of the type supplied by Theorem \ref{theorem1.1} were available only for $s\ge (1+o(1))k^2\log k$ (see \cite{ACK2004}, \cite{Vin1947}, \cite{Woo1992}, \cite{Woo1996} and earlier work of Hua \cite{Hua1965}). In \cite[Theorem 1.1]{Woo2011a}, meanwhile, we showed that $J_{s,k}(X)\ll X^{2s-\frac{1}{2}k(k+1)+\eps}$ for $s\ge k(k+1)$, and this yields the conclusion of Theorem \ref{theorem1.1} with the condition 
$s\ge k^2-1$ replaced by $s\ge k^2+k$. Our new result is consequently rather sharper than that of \cite{Woo2011a}, which in terms of the constraint on the number of variables already comes within a factor $2$ of the widely held conjecture that $J_{s,k}(X)\ll X^{2s-\frac{1}{2}k(k+1)+\eps}$ for $s\ge \frac{1}{2}k(k+1)$.\par

There are numerous consequences of Theorem \ref{theorem1.1}, with refinements available for estimates of Weyl sums, fractional parts of polynomials, and various Diophantine problems. Since these improvements are modest in scale compared to those made available in our previous work \cite{Woo2011a}, we defer discussion of the bulk of such matters to \S11. For the moment, we choose instead to pursue the more subtle features of the behaviour of the mean value $J_{s,k}(X)$.\par

In order to motivate a discussion of the mean value $J_{s,k}(X)$ for smaller values of $s$, we begin by recalling the lower bound
\begin{equation}\label{1.1}
J_{s,k}(X)\gg X^s+X^{2s-\frac{1}{2}k(k+1)}.
\end{equation}
The closely associated conjectural upper bound
\begin{equation}\label{1.2}
J_{s,k}(X)\ll X^\eps (X^s+X^{2s-\frac{1}{2}k(k+1)}).
\end{equation}
is approximated for $0<s\le \frac{1}{2}k(k+1)$ by an estimate of the shape
\begin{equation}\label{1.3}
J_{s,k}(X)\ll X^{s+\del_{s,k}+\eps},
\end{equation}
provided that $\del_{s,k}\ge 0$ is small. Suppose that (\ref{1.3}) holds for an exponent sequence $\del_{s,k}$ with $\del_{s,k}\rightarrow 0$ as $k\rightarrow \infty$. Then, motivated by our earlier work \cite{Woo1994}, we say that the sequence of mean values $J_{s,k}(X)$ $(k\in \dbN)$ exhibits {\it quasi-diagonal behaviour} for the exponent $s$. It follows from \cite[Theorem 1]{Woo1994} that whenever $s\le k^{3/2}(\log k)^{-1}$, quasi-diagonal behaviour holds for the mean value $J_{s,k}(X)$ in a particularly strong form. Indeed, subject to the latter condition on $s$, the bound (\ref{1.3}) holds for the exponent $\del_{s,k}=\exp(-Ak^3/s^2)$, for a certain positive constant $A$. In \S9 we establish that the mean value $J_{s,k}(X)$ exhibits quasi-diagonal behaviour whenever $s=o(k^2)$.

\begin{theorem}\label{theorem1.2} Suppose that $r$, $k$ and $s$ are natural numbers with $k\ge 3$, $1\le r\le \min\{ k-2,\frac{1}{2}k+1\}$ and $s\le r(k-r+2)$. Put
$$\nu_{r,k}=\frac{r-1}{k-r}.$$
Then for each $\eps>0$, one has the estimate $J_{s,k}(X)\ll X^{s+\nu_{r,k}+\eps}$.
\end{theorem}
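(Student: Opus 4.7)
The plan is to adapt the efficient congruencing framework underlying the proof of Theorem~\ref{theorem1.1}, running the iteration for only $r$ stages rather than to the point at which it self-improves, so as to capture quasi-diagonal behaviour at small $s$. Since H\"older's inequality implies that $J_{s,k}(X)^{1/s}$ is non-decreasing in $s$, it suffices to treat the endpoint case $s=r(k-r+2)$, the bound for smaller $s$ following on extracting the $s$th root.

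For a prime $p$ of size $p\asymp X^{\theta}$, where $\theta$ is a parameter to be chosen, I would introduce a family of auxiliary mean values $K_{0}(X),K_{1}(X),\dots ,K_{r}(X)$ that count solutions of the underlying Vinogradov system subject to a nested sequence of congruence conditions: at stage $j$, distinguished pairs of variables are constrained to lie in prescribed arithmetic progressions modulo successive powers $p^{h_{j}}$ of the prime. An initial pigeonhole step on the $X$ available values of each variable, separated into residue classes modulo $p$, furnishes a base estimate $J_{s,k}(X)\ll p^{C_{0}}K_{0}(X)$. The recursive congruencing step $K_{j}(X)\ll X^{\eps}p^{D_{j}}K_{j+1}(X)$ then exploits the fact that the defining polynomial identities, once read modulo $p^{h_{j}}$, force the $k$ free exponents to satisfy a lifted congruence to a higher power of $p$; this stronger condition, when combined with Cauchy--Schwarz on the associated auxiliary integrals, yields the passage to $K_{j+1}(X)$.

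After $r$ iterations the variables localised in $K_{r}(X)$ have been trapped in arithmetic progressions of controlled length, which permits a trivial closing bound. Collecting the accumulated losses yields an estimate of the shape $J_{s,k}(X)\ll X^{s+\eps}X^{\mu (\theta ,r,k)}$; optimising the free parameter $\theta$ produces the claimed loss $\nu_{r,k}=(r-1)/(k-r)$, the denominator $k-r$ recording the per-iteration saving and the $r-1$ in the numerator reflecting the fact that the first congruencing step is essentially lossless since no variable has yet been constrained.

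The principal obstacle will be the bookkeeping of the accumulating $p$-adic valuations $h_{j}$ and losses $D_{j}$, together with the verification that the conditions $r\le k-2$ and $r\le \tfrac{1}{2}k+1$ suffice to drive the recursion through. The first of these ensures that $k-r\ge 2$, so that each congruencing step yields a genuine saving; the second is needed so that the nested arithmetic progressions remain compatible throughout, keeping the polynomial Taylor expansions driving the lifting step non-degenerate. The upper bound $s\le r(k-r+2)$ is exactly what is required for $K_{r}(X)$ to admit the trivial closing bound without demanding further iteration, and it marks the edge of the regime in which this truncated version of the method remains productive.
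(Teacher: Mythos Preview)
Your reduction to the endpoint $s=r(k-r+2)$ via H\"older is correct and matches the paper. But the core mechanism you describe does not: you have misread the role of $r$ and the structure of the efficient congruencing iteration, and the scheme you outline will not produce the exponent $(r-1)/(k-r)$.

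In the paper's argument, $r$ is \emph{not} the number of iteration steps. It is the number of conditioned variables carried in each auxiliary block (the sums $\mathfrak{F}_c^{\boldsymbol\sigma}$ contain $r$ factors, and the congruence lemmata of \S3 work with $r$ variables modulo $p^{jb}$). The iteration itself passes from $K_{a,b}(X)$ to $K_{b,\rho b+h}(X)$ with $\rho=k-r+1$, and is run not $r$ times but $N$ times with $N$ an arbitrarily large parameter. Crucially, the method does not end with a trivial closing bound on some $K_r(X)$; instead one assumes by contradiction that $J_{s+r}(X)\gg X^{2(s+r)-\kappa+\eta}$ with $\eta>0$, propagates this through $N$ iterations to obtain $X^{\eta(1+\psi_N\theta)}\ll X^{\eta}\cdot(\text{bounded})$ with $\psi_N\to\infty$, and concludes $\eta=0$. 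The parameter $\theta$ is not optimised but taken tiny, of order $N^{-1/2}(r/s)^{N+2}$, precisely so that $N$ iterations fit inside the range $[1,X]$.

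The exponent $(r-1)/(k-r)$ does not emerge from counting losses over $r$ steps. It is baked into the choice of target $\kappa_{s+r}=s_0+r-(r-1)/(k-r)$ in (\ref{2.5}), which is tuned so that the per-step loss $\omega(a,b)$ in Lemma~\ref{lemma7.2} is exactly controlled by $r(b-\rho a)$, making the iteration self-consistent. Your interpretation of the constraints is also off: $r\le\tfrac12 k+1$ is needed so that $b_n\ge\rho b_{n-1}\ge(r-1)a_n$, which is the hypothesis $b\ge(r-1)a$ required in Lemma~\ref{lemma3.5} for the congruence count; $r\le k-2$ ensures $\rho\ge3$ and is used in the endgame inequality $\kappa-(2s-2r+1)<0$. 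A finite $r$-step scheme with a trivial terminal bound is the shape of the classical Linnik--Karatsuba argument, and as the paper notes in the introduction, those methods lose by a factor of order $k^2$ relative to what Theorem~\ref{theorem1.2} claims.
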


In order to compare the strength of the estimate supplied by Theorem \ref{theorem1.2} with that of previous work, it is useful to consider the situation in which $s$ and $k$ are natural numbers with $k$ large and $s\le \frac{1}{4}k^2$, and to put $\lam=s/k^2$. Then the work of Arkhipov and Karatsuba \cite{AK1978} shows that (\ref{1.3}) holds with a permissible exponent $\del_{s,k}$ satisfying $\del_{s,k}\ll \lam^{3/2}k^2$, Tyrina \cite{Tyr1987} obtains $\del_{s,k}\ll \lam^2k^2$, whilst Theorem \ref{theorem1.2} yields the significantly stronger bound $\del_{s,k}\ll \lam$. Notice also that by taking $r=1$ in Theorem \ref{theorem1.2}, one recovers the estimate $J_{k+1,k}(X)\ll X^{k+1+\eps}$ obtained in a slightly sharper form in Hua \cite[Lemma 5.4]{Hua1965}, and sharpened further by Vaughan and Wooley \cite{VW1997}. Finally, by putting $r=[(k+1)/2]$ in Theorem \ref{theorem1.2}, one obtains an attractive estimate simple to state.

\begin{corollary}\label{corollary1.3} Suppose that $s$ and $k$ are natural numbers with $k\ge 4$ and $s\le \frac{1}{4}k^2+k$. Then for each $\eps>0$, one has $J_{s,k}(X)\ll X^{s+1+\eps}$.
\end{corollary}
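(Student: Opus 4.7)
The plan is to deduce Corollary~\ref{corollary1.3} as a direct specialization of Theorem~\ref{theorem1.2} by making the choice $r=\lfloor (k+1)/2\rfloor$, which roughly equalises the numerator and denominator in the expression $\nu_{r,k}=(r-1)/(k-r)$ and so pushes $\nu_{r,k}$ down to a value at most $1$. The entire argument therefore reduces to two elementary verifications: (i) that this choice of $r$ satisfies the hypothesis $1\le r\le \min\{k-2,\tfrac{1}{2}k+1\}$ for every $k\ge 4$, and (ii) that under the hypothesis $s\le \tfrac14 k^2+k$ of the corollary one indeed has $s\le r(k-r+2)$, so that Theorem~\ref{theorem1.2} applies.

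For step (i), I would split into the parities of $k$. When $k=2m$ is even, $r=m$, and the bound $m\le k-2=2m-2$ is immediate once $m\ge 2$, while $m\le \tfrac{1}{2}k+1=m+1$ is trivial. When $k=2m+1$ is odd, $r=m+1$, and one checks $m+1\le k-2=2m-1$ for $m\ge 2$ (i.e.\ $k\ge 5$), together with $m+1\le \tfrac{1}{2}k+1=m+\tfrac{3}{2}$. The boundary case $k=4$ of the corollary is the even case with $m=2$, and it passes the test.

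For step (ii), I would again split by parity. If $k=2m$, then $r(k-r+2)=m(m+2)=m^2+2m=\tfrac14 k^2+k$, so the bound $s\le r(k-r+2)$ is identical to the hypothesis of the corollary. If $k=2m+1$, then
\[
r(k-r+2)=(m+1)(m+2)=m^2+3m+2,
\]
whereas $\tfrac14 k^2+k=m^2+3m+\tfrac54$, and so any integer $s$ satisfying $s\le \tfrac14 k^2+k$ necessarily satisfies $s\le m^2+3m+1<r(k-r+2)$. Thus the hypothesis of Theorem~\ref{theorem1.2} is met in both cases.

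Finally, with this choice of $r$ one has $\nu_{r,k}=(m-1)/m$ when $k=2m$ and $\nu_{r,k}=m/(m+1)$ when $k=2m+1$, both of which are strictly less than $1$, so Theorem~\ref{theorem1.2} delivers $J_{s,k}(X)\ll X^{s+\nu_{r,k}+\eps}\ll X^{s+1+\eps}$, as required. There is no substantive obstacle here: the corollary is a packaging result, and the only mild point requiring care is the parity split needed to verify that the integer constraint $s\le \tfrac14 k^2+k$ is subsumed by $s\le r(k-r+2)$ for the chosen $r$.
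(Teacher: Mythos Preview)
Your approach is essentially identical to the paper's: both set $r=\lfloor (k+1)/2\rfloor$, split by parity, verify $r(k-r+2)\ge \tfrac14 k^2+k$, and then check $\nu_{r,k}\le 1$ so that Theorem~\ref{theorem1.2} applies. Your additional explicit verification of the constraint $r\le \min\{k-2,\tfrac12 k+1\}$ is a welcome point the paper leaves to the reader.

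There is, however, one arithmetic slip in your final paragraph. When $k=2m+1$ and $r=m+1$, one has $k-r=m$, not $m+1$, so
\[
\nu_{r,k}=\frac{r-1}{k-r}=\frac{m}{m}=1,
\]
not $m/(m+1)$. Thus $\nu_{r,k}$ is not strictly less than $1$ in the odd case; it equals $1$. This does not damage the conclusion, since Theorem~\ref{theorem1.2} still gives $J_{s,k}(X)\ll X^{s+\nu_{r,k}+\eps}=X^{s+1+\eps}$, which is exactly what the corollary asserts. The paper itself only claims $\nu_{r,k}\le 1$ for this reason. Just correct the computed value and drop the word ``strictly''.
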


The estimate supplied by this corollary comes very close indeed to establishing the conjectured estimate (\ref{1.2}) in the interval $1\le s\le \frac{1}{4}k^2+k$. If one were to establish an analogue of Corollary \ref{corollary1.3} in the longer interval $1\le s\le \frac{1}{2}k(k+1)$, then the full conjecture (\ref{1.2}) would essentially follow. In a sense, therefore, Corollary \ref{corollary1.3} comes half way to proving the main conjecture in this subject. When $s\ge k^2-1$, on the other hand, Theorem \ref{theorem1.1} establishes the conjectured bound  (\ref{1.2}). If one were to establish an analogue of Theorem \ref{theorem1.1} for $s\ge \frac{1}{2}k(k+1)$, this would again prove the main conjecture. Thus one comes half way to proving the main conjecture in two different directions.\par

The conclusion of Theorem \ref{theorem1.1} delivers essentially optimal estimates for $J_{s,k}(X)$ when $s\ge k^2-1$. In \S8 we consider the behaviour of $J_{s,k}(X)$ when $s$ is somewhat smaller than $k^2-1$. In this context, it is useful to define the exponent
\begin{equation}\label{1.4}
\Del_{t,k}=\tfrac{1}{2}t(t-1)\Bigl(\frac{k+1}{k-1}\Bigr).
\end{equation}

\begin{theorem}\label{theorem1.4}
Suppose that $s$, $t$ and $k$ are natural numbers with $k\ge 3$, $1\le t\le k-1$ and $s\ge (k-t)(k+1)$. Then for each $\eps>0$, one has
$$J_{s,k}(X)\ll X^{2s-\frac{1}{2}k(k+1)+\Del_{t,k}+\eps}.$$
\end{theorem}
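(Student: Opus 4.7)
The plan is to adapt the efficient congruencing argument of Theorem \ref{theorem1.1} to the broader range $s\ge (k-t)(k+1)$, where the iteration no longer drives the defect exponent to zero but stabilises at $\Del_{t,k}$. It suffices to establish the bound at the threshold $s=(k-t)(k+1)$, since for larger $s$ the conclusion then follows from the trivial bound $J_{s+1,k}(X)\le X^2J_{s,k}(X)$, which preserves the deficit $\Del_{t,k}$ whilst shifting the main exponent by $2$.

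First I would fix a prime $p$ with $p\sim X^{1/k}$ (selected via a standard pigeonhole argument ensuring good distribution of residues), and introduce the conditional mean values $K_{a,b}(X)$ that count solutions of the Vinogradov system in which two distinguished variables lie in distinct prescribed residue classes modulo $p^a$ and $p^b$ respectively, together with translation-invariant companions obtained from $J_{s,k}(X)$ by an initial application of Hölder's inequality. Let $\lam^*=\lam^*_{s,k}$ denote the infimum of $\lam$ for which $J_{s,k}(X)\ll X^{2s-\frac{1}{2}k(k+1)+\lam+\eps}$; the goal is to establish $\lam^*\le \Del_{t,k}$.

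The congruencing step propagates information from depth $(a,b)$ to depth $(b,ka)$ via Vandermonde elimination modulo $p^{ka}$. Executing the Hölder exchange with moment $2s=2(k-t)(k+1)$, in place of $2s=2(k^2-1)$ used in Theorem \ref{theorem1.1}, introduces a ``missing variable'' deficit equivalent to $t-1$ pairs of variables, producing a recursive inequality of the shape $\lam^*\le \psi_{t,k}+\rho_k\,\lam^*$, where $\rho_k=(k-1)/(k+1)$ encodes the per-step contraction intrinsic to the Vinogradov system (the ratio of $k-1$ continuing variables to $k+1$ Taylor conditions) and $\psi_{t,k}$ is the per-step loss. Solving this linear recurrence yields $\lam^*\le \psi_{t,k}/(1-\rho_k)=\tfrac{1}{2}(k+1)\psi_{t,k}$; careful bookkeeping of the Hölder exponents should give $\psi_{t,k}=t(t-1)/(k-1)$, which collapses to $\tfrac{1}{2}t(t-1)(k+1)/(k-1)=\Del_{t,k}$, matching the target exponent from (\ref{1.4}).

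The principal obstacle is the precise calibration of the Hölder exponents at the congruencing step. When $s<k^2-1$ the balance achieved in the proof of Theorem \ref{theorem1.1} is perturbed, and one must verify that the non-diagonal contributions are still controllable given only $s\ge (k-t)(k+1)$ variables, whilst simultaneously ensuring that the per-step loss $\psi_{t,k}$ is no larger than $t(t-1)/(k-1)$ — any slack here would enlarge the fixed point and yield a bound strictly weaker than $\Del_{t,k}$. The routine matters — pigeonholing the prime $p$, terminating the iteration at depth $J$ once $p^{k^J}$ reaches $X$, and extracting the limiting exponent via the standard induction-on-$X$ device — follow the template established in \cite{Woo2011a}.
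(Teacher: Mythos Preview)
Your proposal misidentifies the mechanism by which $\Del_{t,k}$ arises, and as a result the key structural idea of the paper's proof is absent. You describe an iteration of the form $\lam^*\le \psi_{t,k}+\rho_k\lam^*$ that stabilises at a positive fixed point $\Del_{t,k}$. That is not what happens. The paper instead chooses a \emph{weaker target}: writing $r=k-t$ and setting $\kap=(rk-\tfrac{1}{2}r(r+1))(k+1)/(k-1)$, one has $\tfrac{1}{2}k(k+1)-\kap=\Del_{t,k}$, and the efficient congruencing iteration (Lemmata \ref{lemma7.1}, \ref{lemma8.1}, \ref{lemma8.2}) still drives the defect $\eta_{s+r}$ all the way to zero relative to this $\kap$. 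The exponent $\Del_{t,k}$ is not a fixed point of a contraction; it is simply the gap between the chosen $\kap$ and $\tfrac{1}{2}k(k+1)$.

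What makes this work --- and what your proposal does not contain --- is that the conditioning uses $r=k-t$ distinguished variables rather than the full $k$. The auxiliary congruence count (Lemma \ref{lemma3.3}) then gives $B_{a,b}^{r,k}(p)\le k!\,p^{\frac{1}{2}r(r-1)(a+b)}$, and it is precisely this $\tfrac{1}{2}r(r-1)$ that calibrates $\kap$ so that the iteration closes. Your sketch, with two distinguished variables and an unspecified ``missing variable deficit'', does not supply this ingredient; without it there is no reason the Hölder bookkeeping should yield $\psi_{t,k}=t(t-1)/(k-1)$ rather than something larger. Several of the technical details are also off: the prime satisfies $p\asymp M=X^\tet$ with $\tet$ extremely small (see (\ref{2.9})), not $p\sim X^{1/k}$; and the step proceeds from $(a,b)$ to $(b,kb+h)$, not to $(b,ka)$.
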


The exponent $\Del_{t,k}$ in the upper bound presented in Theorem \ref{theorem1.4} converges quadratically to zero as $t$ decreases to zero, representing a substantial improvement over the bounds made available by means of linear interpolation via H\"older's inequality. Notice that Theorem \ref{theorem1.1} follows from Theorem \ref{theorem1.4} by simply setting $t=1$.\par

We turn next to applications of our methods in the context of Waring's problem. When $s$ and $k$ are natural numbers, let $R_{s,k}(n)$ denote the number of representations of the natural number $n$ as the sum of $s$ $k$th powers of positive integers. A formal application of the circle method suggests that for $k\ge 3$ and $s\ge k+1$, one should have
\begin{equation}\label{1.5}
R_{s,k}(n)=\frac{\Gam(1+1/k)^s}{\Gam(s/k)}\grS_{s,k}(n)n^{s/k-1}+o(n^{s/k-1}),
\end{equation}
where
$$\grS_{s,k}(n)=\sum_{q=1}^\infty \sum^q_{\substack{a=1\\ (a,q)=1}}\Bigl( q^{-1}\sum_{r=1}^qe(ar^k/q)\Bigr)^se(-na/q).$$
Subject to suitable congruence conditions, one has $1\ll \grS_{s,k}(n)\ll n^\eps$, so that the conjectured relation (\ref{1.5}) represents an honest asymptotic formula. Let $\Gtil(k)$ denote the least integer $t$ with the property that, for all $s\ge t$, and all sufficiently large natural numbers $n$, one has the asymptotic formula (\ref{1.5}). By incorporating the estimates supplied by Theorems \ref{theorem1.1} and \ref{theorem1.4} into our recent work concerning the asymptotic formula in Waring's problem \cite{Woo2011b}, in \S10 we derive the upper bounds for $\Gtil(k)$ contained in the following theorem. We make use here of the notation defined in (\ref{1.4}).

\begin{theorem}\label{theorem1.5}
Let $k$ be a natural number with $k\ge 3$. Then one has
$$\Gtil(k)\le 2k^2-2k+1-\max_{\substack{0\le r\le k-2\\ 2^r\le k^2-k-1}}\left\lceil \frac{2(k-1)(r+1)-2^{r+1}}{k-r}\right\rceil ,$$
and also
$$\Gtil(k)\le 2k^2-1-\underset{2(t-1)(k+1)+m(m-1)<2k^2-2}{\max_{1\le m\le k}\max_{1\le t\le k-1}}\left\lceil \frac{2(k+1)(t-1)-m(m-1)}{1+\Del_{t,k}/m}\right\rceil .$$
\end{theorem}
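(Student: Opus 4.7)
The plan is to insert the new mean value estimates from Theorems \ref{theorem1.1} and \ref{theorem1.4} into the circle method apparatus for the asymptotic formula in Waring's problem developed in \cite{Woo2011b}. Writing $X=\lfloor n^{1/k}\rfloor$, one dissects the unit torus into major arcs $\grM$ and minor arcs $\grm$, the former delivering the anticipated main term of (\ref{1.5}) by a standard analysis. It therefore suffices to establish that
\begin{equation*}
\int_{\grm}|f_k(\bfalp;X)|^s\d\bfalp = o(X^{s-k}),
\end{equation*}
whenever $s$ exceeds the stated bounds on $\widetilde G(k)$.

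The minor arc integral I would bound by the standard H\"older interpolation
\begin{equation*}
\int_{\grm}|f_k|^s\d\bfalp \le \Bigl(\sup_{\bfalp\in\grm}|f_k(\bfalp;X)|\Bigr)^{s-2u}J_{u,k}(X),
\end{equation*}
for a parameter $u$ to be selected. The supremum should be controlled by a Weyl-type estimate $\sup_{\grm}|f_k|\ll X^{1-\sigma+\eps}$, obtained by combining a limited number of Weyl differencing steps with the efficient congruencing mean value bounds, in the manner of \cite{Woo2011b}. Writing $\Lam$ for the excess exponent in the mean value bound ($\Lam=0$ when using Theorem \ref{theorem1.1}, and $\Lam=\Del_{t,k}$ when using Theorem \ref{theorem1.4}), the requirement $\int_{\grm}|f_k|^s\d\bfalp=o(X^{s-k})$ reduces, after comparison of exponents, to the inequality
\begin{equation*}
\sigma(s-2u) > \tfrac{1}{2}k(k-1) + \Lam.
\end{equation*}

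For the first inequality of Theorem \ref{theorem1.5}, I would take $u=k^2-1$ and apply Theorem \ref{theorem1.1} with $\Lam=0$. With the Weyl-type bound corresponding to $r$ differencing steps, valid under the constraint $2^r\le k^2-k-1$ (which ensures that the reduced polynomial of degree $k-r$ admits a usable mean value bound), rearranging the above inequality produces exactly the stated ceiling expression $\lceil(2(k-1)(r+1)-2^{r+1})/(k-r)\rceil$, and one optimises over $r$. For the second inequality, I would instead apply Theorem \ref{theorem1.4} with parameter $t$ and take $u=(k-t)(k+1)$, so that $\Lam=\Del_{t,k}$; an additional auxiliary parameter $m$ enters through a refined H\"older split that weights the Weyl supremum against a further factor of $|f_k|^{2m}$, yielding the denominator $1+\Del_{t,k}/m$ in the ceiling. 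Optimisation over admissible pairs $(t,m)$ subject to $2(t-1)(k+1)+m(m-1)<2k^2-2$ then produces the second bound.

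The principal technical obstacle is the careful derivation of the Weyl-type supremum bound in the second case, where the $\Del_{t,k}/m$ refinement must be tracked through the interpolation, and the verification that the permissible ranges of $r$, $t$, and $m$ over which the underlying inequalities of \cite{Woo2011b} remain sharp coincide precisely with the ranges appearing in the maxima of Theorem \ref{theorem1.5}. Once this dictionary between the parameters of the framework in \cite{Woo2011b} and the exponents of Theorems \ref{theorem1.1} and \ref{theorem1.4} is established, the remainder of the argument is a direct arithmetical optimisation.
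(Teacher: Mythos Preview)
Your proposal has a genuine structural gap: the route you sketch is not the one that produces the precise ceiling expressions in Theorem~\ref{theorem1.5}, and the ingredients you name are not the ones that are actually needed.

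First, the generating function for Waring's problem is $g_k(\alp;X)=\sum_{1\le x\le X}e(\alp x^k)$ with a \emph{single} real variable $\alp$; the minor arc integral is $\int_\grm |g_k|^s\,d\alp$, not an integral of $|f_k(\bfalp;X)|^s$ over $[0,1)^k$. More importantly, the paper does \emph{not} use a $\sup\times\text{mean}$ decomposition. The bridge from Vinogradov's mean value to the minor arcs is \cite[Theorem~2.1]{Woo2011b}, which gives
\[
\int_\grm |g_k(\alp;X)|^{2s}\,d\alp \ll X^{\frac{1}{2}k(k-1)-1}(\log X)^{2s+1}J_{s,k}(2X),
\]
and with Theorem~\ref{theorem1.1} this yields $\int_\grm |g_k|^{2k^2-2}\,d\alp\ll X^{2k^2-2-k-1+\eps}$ (Theorem~\ref{theorem10.1}). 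Both bounds of Theorem~\ref{theorem1.5} then come from a H\"older interpolation between this minor-arc mean value at $2k^2-2$ and a \emph{second} mean value, not a Weyl supremum.

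For the first bound, that second mean value is Hua's lemma, $\int_0^1|g_k|^{2^{j+1}}\,d\alp\ll X^{2^{j+1}-j-1+\eps}$; this is the source of the $2^{r+1}$ in the formula, and the constraint $2^r\le k^2-k-1$ simply ensures $2^{r+1}<2k^2-2$ so that the interpolation is non-trivial. It has nothing to do with ``reduced polynomials of degree $k-r$''. For the second bound, the second mean value comes from Ford's theorem \cite[Theorem~1]{For1995} (recorded here as Theorem~\ref{theorem10.3}), which bounds $\int_0^1|g_k|^{2s}\,d\alp$ in terms of $J_{s-\frac{1}{2}m(m-1),k}(X^{1/m})$; applying Theorem~\ref{theorem1.4} to that $J$-term is what produces both the $m(m-1)$ and the denominator $1+\Del_{t,k}/m$. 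Your description of $m$ as arising from ``a refined H\"older split that weights the Weyl supremum against a further factor of $|f_k|^{2m}$'' is not correct and would not yield that denominator. Without Hua's lemma and Ford's theorem in the picture, your outline cannot recover the stated formulae.
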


Two consequences of Theorem \ref{theorem1.5} deserve to be recorded.

\begin{corollary}\label{corollary1.6} When $k$ is a large natural number, one has
$$\Gtil(k)\le 2k^2-k^{4/3}+O(k).$$
\end{corollary}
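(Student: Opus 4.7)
The plan is to deduce the corollary from the second of the two upper bounds in Theorem~\ref{theorem1.5} by optimising over $(t,m)$. The first bound, subject to the constraint $2^r\le k^2-k-1$, permits only $r=O(\log k)$, and so yields a saving of size merely $O(\log k)$ below $2k^2-2k+1$; this falls far short of $k^{4/3}$. The second bound, by contrast, admits polynomial flexibility in $t$ and $m$, and the task is to extract from it a saving of magnitude $k^{4/3}$.

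To locate the optimum, I would first perform a scaling heuristic. Writing $t=ak^{1/3}$ and $m=bk^{2/3}$ for positive reals $a,b$, and noting that $\Del_{t,k}\sim \tfrac{1}{2}t^2$, the quantity
$$\frac{2(k+1)(t-1)-m(m-1)}{1+\Del_{t,k}/m}$$
behaves asymptotically like $\dfrac{2b(2a-b^2)}{2b+a^2}\,k^{4/3}$. Maximising the coefficient over $a,b>0$ is a short calculus exercise: rearranging the equation $2b(2a-b^2)=2b+a^2$ as a quadratic in $a$, its discriminant is $-8b(b-1)^2\le 0$, so the coefficient never exceeds $1$, with equality precisely at $(a,b)=(2,1)$.

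Guided by this, I would set $t=\lfloor 2k^{1/3}\rfloor$ and $m=\lfloor k^{2/3}\rfloor$, adjusting by bounded amounts if necessary to ensure $1\le t\le k-1$ and $1\le m\le k$. Direct computation gives $\Del_{t,k}=2k^{2/3}+O(k^{1/3})$, so that $1+\Del_{t,k}/m=3+O(k^{-1/3})$, while the numerator expands as $4k^{4/3}-k^{4/3}+O(k)=3k^{4/3}+O(k)$. Their ratio is therefore $k^{4/3}+O(k)$. Meanwhile, the auxiliary constraint $2(t-1)(k+1)+m(m-1)<2k^2-2$ reduces to $5k^{4/3}+O(k)<2k^2-2$, which holds for all sufficiently large $k$. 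Substituting into Theorem~\ref{theorem1.5} then yields $\Gtil(k)\le 2k^2-1-(k^{4/3}+O(k))=2k^2-k^{4/3}+O(k)$, as claimed.

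The main obstacle is identifying the correct scaling $t\sim k^{1/3}$, $m\sim k^{2/3}$: more symmetric-looking choices such as $t\sim m$ already fall short by a factor of $k^{1/3}$. Once the scaling is in hand, both the optimisation over $(a,b)$ and the verification of the side constraints reduce to routine asymptotic bookkeeping.
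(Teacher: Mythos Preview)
Your proposal is correct and follows essentially the same route as the paper: the paper also takes $t=2[k^{1/3}]$ and $m=[k^{2/3}]$ in the second bound of Theorem~\ref{theorem1.5}, computes $1+\Del_{t,k}/m=3+O(k^{-1/3})$ and numerator $3k^{4/3}+O(k)$, and reads off the result. Your scaling heuristic and the verification that $(a,b)=(2,1)$ is the genuine optimum are additional (and correct) justifications that the paper omits.
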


This conclusion sharpens slightly the bound $\Gtil (k)\le 2k^2-2\left[ (\log k)/(\log 2)\right]$ established recently in \cite[Corollary 1.2]{Woo2011b}.

\begin{corollary}\label{corollary1.7} When $k$ is a natural number with $k\ge 6$, one has
$$\Gtil(k)\le 2k^2-2k-\tet_k,$$
where
$$\tet_k=\begin{cases}8,&\text{when $k=6$,}\\ 9,&\text{when $7\le k\le 13$,}\\ 10,&\text{when $14\le k\le 19$,}\\ 12,&\text{when $k\ge 20$.}\end{cases}$$
\end{corollary}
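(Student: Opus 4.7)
The plan is to deduce each of the four cases in Corollary~\ref{corollary1.7} by specialising the two bounds of Theorem~\ref{theorem1.5} to explicit parameter choices. Rearranging the target inequality $\Gtil(k)\le 2k^2-2k-\tet_k$, the first bound of Theorem~\ref{theorem1.5} delivers the desired conclusion provided we exhibit an admissible $r$ for which $\lceil(2(k-1)(r+1)-2^{r+1})/(k-r)\rceil\ge\tet_k+1$, while the second bound does so provided we exhibit admissible $(t,m)$ for which $\lceil(2(k+1)(t-1)-m(m-1))/(1+\Del_{t,k}/m)\rceil\ge 2k-1+\tet_k$. The proof therefore reduces to displaying suitable parameters in each range and checking the arithmetic.

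For all cases with $k\le 19$ I would invoke only the first bound of Theorem~\ref{theorem1.5}. In the extremal case $k=6$ the side condition $2^r\le k^2-k-1=29$ still permits $r=4$, and this choice gives $\lceil(50-32)/2\rceil=9=\tet_6+1$ exactly. For $7\le k\le 13$ I would again take $r=4$, reducing the expression to $\lceil 10-2/(k-4)\rceil=10=\tet_k+1$ throughout. For $14\le k\le 19$ I would upgrade to $r=5$, producing $\lceil 12-16/(k-5)\rceil=11=\tet_k+1$ across the range. The heuristic behind these choices is that $(2(k-1)(r+1)-2^{r+1})/(k-r)$ approaches $2(r+1)$ from below as $k\to\infty$ for fixed $r$, so one picks the largest $r$ for which the correction $-2^{r+1}$ does not erase the main term.

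For $k\ge 20$ the first bound yields at most $11$ in the ceiling expression at $k=20,21$, hence cannot deliver $\tet_k=12$, and I would switch to the second bound with the uniform choice $(t,m)=(6,8)$. Using $\Del_{6,k}=15(k+1)/(k-1)$, the denominator $1+\Del_{6,k}/8$ simplifies to $(23k+7)/(8(k-1))$, so the quantity to be bounded becomes $\lceil 8(10k-46)(k-1)/(23k+7)\rceil$. This is asymptotic to $80k/23\approx 3.48k$, comfortably exceeding the required $2k+11$ for all $k\ge 20$; direct computation at the boundary $k=20$ yields $\lceil 50.13\rceil=51=2k+11$, which is tight but sufficient. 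The side condition $2(t-1)(k+1)+m(m-1)=10(k+1)+56<2k^2-2$ is trivial throughout.

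The main obstacle is the search itself: the correct parameters are not forced by the formulae but must be identified so that the resulting ceiling matches the tabulated $\tet_k$. Since the ceiling function turns numerical margins of $1$ into success-or-failure decisions, care is needed at the tight boundaries $k=6$, $k=13$, $k=19$, and $k=20$. Once the parameters are chosen, the remaining verification---checking admissibility of $r$ or of $(t,m)$ under the side conditions of Theorem~\ref{theorem1.5} and performing the numerical estimates---is mechanical.
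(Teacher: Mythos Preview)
Your proposal is correct and follows essentially the same route as the paper: the first bound of Theorem~\ref{theorem1.5} with $r=4$ handles $6\le k\le 13$, the choice $r=5$ handles $14\le k\le 19$, and the second bound handles $k\ge 20$. The only difference is cosmetic: for $k\ge 20$ you use $(t,m)=(6,8)$ and verify the inequality uniformly via the quadratic $17k^2-346k+149>0$, whereas the paper uses $(t,m)=(7,9)$ and checks only $k=20$ explicitly, leaving the uniform statement to the reader.
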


In particular, one has
$$\Gtil(6)\le 52,\ \Gtil(7)\le 75,\ \Gtil(8)\le 103,\ \Gtil(9)\le 135,\ldots ,\ \Gtil(20)\le 748.$$
For comparison, in \cite[Corollary 1.2]{Woo2011b} we showed that
$$\Gtil(7)\le 86,\, \Gtil(8)\le 117,\ \Gtil(9)\le 151,\ldots ,\ \Gtil(20)\le 789.$$
Work preceding the introduction of efficient congruencing delivered substantially weaker conclusions. Thus, for smaller values of $k$, by using a refinement of an earlier method of Heath-Brown \cite{HB1988}, it was shown by Boklan \cite{Bok1994} that
$$\Gtil(6)\le 56,\ \Gtil(7)\le 112,\ \Gtil(8)\le 224.$$
For large values of $k$, meanwhile, one had the work of Ford \cite{For1995}. Together with refinements for intermediate values of $k$ due to Parsell \cite{Par2009} and Boklan and Wooley \cite{BW2011}, this delivered the bounds
$$\Gtil(9)\le 365,\ldots ,\Gtil(20)\le 2534,\quad \text{and}\quad \Gtil(k)\le k^2(\log k+\log \log k+O(1)).$$
We note that the methods underlying the proof of Theorem \ref{theorem1.5} fail by $\eps$ to deliver the bound $\Gtil(5)\le 32$ established by Vaughan \cite{Vau1986}. Thus, our methods come within a whisker of achieving useful conclusions even for $k=5$.\par

We establish Theorems \ref{theorem1.1}, \ref{theorem1.2} and \ref{theorem1.4} by means of the {\it efficient congruencing} method introduced in our earlier work \cite{Woo2011a}. A sketch of the method is provided in \cite[\S2]{Woo2011a}, and the reader may find this a helpful guide when it comes to understanding the basic plan of attack in this paper. It is a notable feature of this earlier work that, when successful for a given choice of $s$, the method yields a bound of the shape $J_{s,k}(X)\ll X^{2s-\frac{1}{2}k(k+1)+\eps}$, within a factor $X^\eps$ of the sharpest bound conjectured to hold. In this paper we adapt the efficient congruencing method so as to obtain weaker bounds of the shape $J_{s,k}(X)\ll X^{2s-\kap(s,k)+\eps}$, wherein $\kap(s,k)<\frac{1}{2}k(k+1)$. Although this advance may seem to provide only modest additional flexibility, it is neither trivial nor inconsequential. Further differences will be encountered from \cite{Woo2011a} in the handling of auxiliary congruences, and in particular linear congruence information is more efficiently handled implicitly within the main congruencing process.\par

We organise this paper as follows. In \S2 we invest in some preliminary manoeuvres and introduce notation that facilitates what follows. Estimates for auxiliary congruences are established in \S3, and in \S4 we perform the conditioning of variables that permits non-singularity constraints to be imposed on the variables where needed. The efficient congruencing process is described in two stages. In \S5 we perform the efficient congruencing step itself. Then, following discussion of an initial pre-congruencing step in \S6, we advance in \S7 to extract from the conclusions of \S5 a formulation suitable for iterating the efficient congruencing process. We now come to the iterative relations, and these differ according to the variable regime of interest. In \S\S8 and 9 we establish, respectively, Theorems \ref{theorem1.1} and \ref{theorem1.4}, and Theorem \ref{theorem1.2}. Then in \S10, we discuss the asymptotic formula in Waring's problem, proving Theorem \ref{theorem1.5} and its corollaries. Finally, in \S11, we consider several further consequences of our new estimates. Here we highlight improvements in estimates of Weyl type, the distribution of polynomials modulo $1$, Tarry's problem, and an estimate of Croot and Hart related to the sum-product theorem.

\section{Preliminaries and infrastructure}  Our objective in this section is to introduce such notation and preliminary estimates as are needed to describe the infrastructure of the repeated efficient congruencing process. In what follows, the letter $k$ denotes a fixed integer exceeding $2$, the letter $s$ will be a positive integer, and $\eps$ denotes a sufficiently small positive number. The basic parameter occurring in our asymptotic estimates is $X$, a large real number depending at most on $k$, $s$ and $\eps$, unless otherwise indicated. In an effort to simplify our exposition, we adopt the following convention concerning the number $\eps$. Whenever $\eps$ appears in a statement, either implicitly or explicitly, we assert that the statement holds for each $\eps>0$. Note that the ``value'' of $\eps$ may consequently change from statement to statement. We are relatively cavalier concerning the use of vector notation. In particular, we may write $\bfz\equiv \bfw\pmod{p}$ to denote that $z_i\equiv w_i\pmod{p}$ $(1\le i\le t)$, or even $\bfz\equiv \xi\pmod{p}$ to denote that $z_i\equiv \xi\pmod{p}$ $(1\le i\le t)$. Finally, throughout \S\S2--9, we consider the integer $k$ to be fixed, and we therefore abbreviate $J_{s,k}(X)$ to $J_s(X)$, and likewise $f_k(\bfalp;X)$ to $f(\bfalp;X)$, without further comment.\par

Our attention is focused on the mean value $J_s(X)$ where, for the moment, we think of $s$ as being an arbitrary natural number. We refer to the exponent $\lam_s$ as {\it permissible} when, for each positive number $\eps$, and for any real number $X$ sufficiently large in terms of $s$, $k$ and $\eps$, one has $J_s(X)\ll X^{\lam_s+\eps}$. Define $\lam_s^*$ to be the infimum of the set of exponents $\lam_s$ permissible for $s$ and $k$. In view of the conjectured upper bound (\ref{1.2}) and the corresponding lower bound (\ref{1.1}), we expect that for each natural number $s$, one should have
$$\lam_s^*=\max\{s,2s-\tfrac{1}{2}k(k+1)\}.$$
In our earlier work \cite{Woo2011a}, we sought to establish that $\lam_s^*=2s-\frac{1}{2}k(k+1)$ with $s$ as small as possible, and indeed we established such for $s\ge k(k+1)$. In present circumstances we are less ambitious, though we ultimately prove more. With this in mind, we take $\kap_s=\kap(s,k)$ to be a positive parameter to be chosen in due course, but satisfying $\kap_s\le \max\{s,\frac{1}{2}k(k+1)\}$. In addition, we define $\eta_s=\eta_s(\kap_s,k)$ by putting $\eta_s=\lam_s^*-2s+\kap_s$. Thus, whenever $X$ is sufficiently large in terms of $s$, $k$ and $\eps$, one has
\begin{equation}\label{2.1}
J_s(X)\ll X^{\lam_s^*+\eps},
\end{equation}
where
\begin{equation}\label{2.2}
\lam_s^*=2s-\kap_s+\eta_s.
\end{equation}

\par Rather than investigate the sequence of exponents $\lam_s^*$ directly, it is more convenient instead to fix a natural number $r$ with
\begin{equation}\label{2.3}
1\le r\le k-1,
\end{equation}
and then seek to bound $\lam_{s+r}^*$. By choosing $\kap_{s+r}$ carefully in terms of $s$, we are able to apply the efficient congruencing process to show that $\eta_{s+r}$ may be taken to be an arbitrarily small positive number, and thereby we demonstrate that in fact $\lam_{s+r}^*\le 2s+2r-\kap_{s+r}$. We determine $\kap_{s+r}$ in terms of $s$ and $k$ by means of the parameter $r$ as follows. Fix natural numbers $s$ and $s_0$ with $s\ge s_0$, and write
\begin{equation}\label{2.4}
\rho=k-r+1.
\end{equation}
When it comes to proving Theorem \ref{theorem1.2} we take
\begin{equation}\label{2.5}
s_0=r\rho\quad \text{and}\quad \kap_{s+r}=s_0+r-\frac{r-1}{k-r},
\end{equation}
and for the proof of Theorem \ref{theorem1.4} we take
\begin{equation}\label{2.6}
s_0=rk\quad \text{and}\quad \kap_{s+r}=(rk-\tfrac{1}{2}r(r+1))\left(\frac{k+1}{k-1}\right).
\end{equation}
Our goal is to show that $\lam_{s+r}^*\le 2(s+r)-\kap_{s+r}$, and so we suppose by way of contradiction that in fact
$$\lam_{s+r}^*=2(s+r)-\kap_{s+r}+\eta_{s+r},$$
with $\eta_{s+r}>0$.\par

Let $\del$ be a small positive number to be chosen shortly. In view of the infimal definition of $\lam_{s+r}^*$, there exists a sequence of natural numbers $(X_n)_{n=1}^\infty$, tending to infinity, with the property that
\begin{equation}\label{2.7}
J_{s+r}(X_n)>X_n^{\lam_{s+r}^*-\del}\quad (n\in \dbN).
\end{equation}
Provided that $X_n$ is sufficiently large, it follows from (\ref{2.1}) that for $X_n^{\del^2}<Y\le X_n$, one has the corresponding upper bound
\begin{equation}\label{2.8}
J_{s+r}(Y)<Y^{\lam_{s+r}^*+\del}.
\end{equation}
Notice that since $s\ge s_0$, the trivial inequality $|f(\bfalp;X)|\le X$ yields the upper bound
$$J_{s+r}(X)\le X^{2(s-s_0)}\oint |f(\bfalp;X)|^{2s_0+2r}\d\bfalp =X^{2(s-s_0)}J_{s_0+r}(X).$$
Consequently, one has $\eta_{s+r}\le \eta_{s_0+r}$, and so we are at liberty to restrict attention to the special case $s=s_0$. Since $s_0$ is a multiple of $r$, we consider a fixed natural number $u$ with $u\ge s_0/r$, and put $s=ru$. We keep in play the general case $s\ge s_0$ until the final stages of our argument, the better to illuminate the underlying ideas. Finally, we take $N$ to be a natural number sufficiently large in terms of $s$, $k$ and $r$. In our proofs of Theorems \ref{theorem1.2} and \ref{theorem1.4} we put
\begin{equation}\label{2.9}
\tet=N^{-1/2}(r/s)^{N+2}
\end{equation}
and fix $\del$ to be a positive number with $\del<(Ns)^{-3N}$, so that $\del$ is small compared to $\tet$. We now take a fixed element $X=X_n$ of the sequence $(X_n)$, which we may assume to be sufficiently large in terms of $s$, $k$, $r$, $N$ and $\del$, and put $M=X^\tet$. In particular, we have $X^\del<M^{1/N}$.\par

Let $p$ be a fixed prime number with $M<p\le 2M$ to be chosen in due course. That such a prime exists is a consequence of the Prime Number Theorem. When $c$ and $\xi$ are non-negative integers, and $\bfalp \in [0,1)^k$, define
\begin{equation}\label{2.10}
\grf_c(\bfalp;\xi)=\sum_{\substack{1\le x\le X\\ x\equiv \xi\mmod{p^c}}}e(\psi(x;\bfalp)),
\end{equation}
where
$$\psi(x;\bfalp)=\alp_1x+\alp_2x^2+\ldots +\alp_kx^k.$$
As in \cite{Woo2011a}, we must consider well-conditioned tuples of integers belonging to distinct congruence classes modulo a suitable power of $p$, though now we must proceed in greater generality. Denote by $\Xi_c^r(\xi)$ the set of $r$-tuples $(\xi_1,\ldots ,\xi_r)$, with
$$1\le \xi_i\le p^{c+1}\quad \text{and}\quad \xi_i\equiv \xi\pmod{p^c}\quad (1\le i\le r),$$
and satisfying the property that $\xi_i\equiv \xi_j\pmod{p^{c+1}}$ for no $i$ and $j$ with $1\le i<j\le r$. In addition, write $\Sig_r=\{1,-1\}^r$, and consider an element $\bfsig$ of $\Sig_r$. We then define
\begin{equation}\label{2.11}
\grF_c^\bfsig(\bfalp;\xi)=\sum_{\bfxi\in \Xi_c^r(\xi)}\prod_{i=1}^r\grf_{c+1}(\sig_i\bfalp;\xi_i).
\end{equation}
Notice that we have suppressed mention of the parameter $r$ in our notation for the exponential sum $\grF_c^\bfsig(\bfalp;\xi)$, based on the premise that any possible confusion should be easily avoided.\par

Two mixed mean values are important within our arguments. First, when $a$ and $b$ are positive integers and $\bfsig\in \Sig_r$, we define
\begin{equation}\label{2.12}
I_{a,b}^\bfsig(X;\xi,\eta)=\oint |\grF_a^\bfsig(\bfalp;\xi)^2\grf_b(\bfalp;\eta)^{2s}|\d\bfalp
\end{equation}
and
\begin{equation}\label{2.13}
K^{\bfsig,\bftau}_{a,b}(X;\xi,\eta)=\oint |\grF_a^\bfsig(\bfalp;\xi)^2\grF_b^\bftau (\bfalp;\eta)^{2u}|\d \bfalp .
\end{equation}
It is convenient then to put
\begin{equation}\label{2.14}
I_{a,b}(X)=\max_{1\le \xi\le p^a}\max_{\substack{1\le \eta\le p^b\\ \eta\not\equiv \xi\mmod{p}}}\max_{\bfsig \in \Sig_r}I^\bfsig_{a,b}(X;\xi,\eta)
\end{equation}
and
\begin{equation}\label{2.15}
K_{a,b}(X)=\max_{1\le \xi\le p^a}\max_{\substack{1\le \eta\le p^b\\ \eta\not\equiv \xi\mmod{p}}}\max_{\bfsig,\bftau\in \Sig_r}K_{a,b}^{\bfsig,\bftau}(X;\xi,\eta).
\end{equation}
The implicit dependence of these mean values on our choice of $p$ will ultimately be rendered irrelevant, since we fix $p$ in the pre-congruencing step described in \S6, following the proof of Lemma \ref{lemma6.1}. We defer the definition of $K_{0,b}(X)$ to \S6, since there are technical complications better avoided at this stage.\par

As in \cite{Woo2011a}, our arguments are simplified by making transparent the relationship between mean values and their anticipated magnitudes. In this context, we define $\llbracket J_{s+r}(X)\rrbracket$ by means of the relation
\begin{equation}\label{2.16}
J_{s+r}(X)=X^{2s+2r-\kap_{s+r}}\llbracket J_{s+r}(X)\rrbracket ,
\end{equation}
and when $0\le a<b$, we define $\llbracket K_{a,b}(X)\rrbracket$ by means of the relation
\begin{equation}\label{2.17}
K_{a,b}(X)=(X/M^b)^{2s}(X/M^a)^{2r-\kap_{s+r}}\llbracket K_{a,b}(X)\rrbracket.
\end{equation}
The lower bound (\ref{2.7}) may now be written
\begin{equation}\label{2.18}
\llbracket J_{s+r}(X)\rrbracket >X^{\eta_{s+r}-\del}.
\end{equation}

\par We finish this section by recalling an estimate from \cite{Woo2011a} that encapsulates the translation-dilation invariance of the Diophantine system underlying the mean value $J_s(X)$.

\begin{lemma}\label{lemma2.1}
Suppose that $c$ is a non-negative integer with $c\tet\le 1$. Then for each natural number $t$, one has
$$\max_{1\le \xi\le p^c}\oint |\grf_c(\bfalp;\xi)|^{2t}\,d\bfalp \ll_t J_t(X/M^c).$$
\end{lemma}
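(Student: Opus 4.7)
The plan is to exploit the translation--dilation invariance of the underlying Diophantine system by substituting $x = \xi + p^c y$ in the definition (\ref{2.10}) of $\grf_c(\bfalp;\xi)$. Under this substitution, $y$ ranges over an interval $I$ of consecutive integers of cardinality at most $X/p^c + 1$; the hypothesis $c\tet \le 1$ ensures $M^c \le X$, so that $|I| \ll X/M^c$.

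Expanding
\begin{equation*}
\psi(\xi + p^c y; \bfalp) = \beta_0 + \beta_1 y + \ldots + \beta_k y^k, \quad \beta_j = p^{cj} \sum_{i=j}^k \binom{i}{j} \xi^{i-j} \alp_i,
\end{equation*}
shows that $|\grf_c(\bfalp;\xi)|$ equals the modulus of a degree $k$ polynomial exponential sum over $y \in I$ in the new variables $\bfbet = (\beta_1, \ldots, \beta_k)$. The map $\bfalp \mapsto \bfbet$ is given by an upper triangular \emph{integer} matrix with diagonal entries $p^{cj}$ $(1 \le j \le k)$, and hence has integer determinant $p^{ck(k+1)/2}$.

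I would then perform the change of variables from $\bfalp$ to $\bfbet$. Because the integrand $|\sum_{y\in I} e(\beta_1 y + \ldots + \beta_k y^k)|^{2t}$ is $1$-periodic in each $\beta_j$, and because an integer matrix transformation of nonzero determinant $N$ wraps $[0,1)^k$ exactly $N$ times over $\dbR^k/\dbZ^k$, the Jacobian cancels this multiplicity to yield
\begin{equation*}
\oint |\grf_c(\bfalp;\xi)|^{2t} \d\bfalp = \oint \Bigl| \sum_{y \in I} e(\beta_1 y + \ldots + \beta_k y^k) \Bigr|^{2t} \d\bfbet.
\end{equation*}
A further unimodular shift $y \mapsto y + y_0$ (for a suitable integer $y_0$) repositions $I$ as $\{1, 2, \ldots, |I|\}$ while inducing, via a triangular integer matrix with unit diagonal, a unimodular transformation on $\bfbet$; the integral is again preserved. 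The right-hand side then equals exactly $J_t(|I|)$, and since $|I| \ll X/M^c$, the stated bound follows from the easy monotonicity of $J_t$ (at the cost of a $t$-dependent constant).

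The main point requiring care is the justification of the identification of integrals under the triangular change of variables. This rests on the matrix $\bfalp \mapsto \bfbet$ having integer entries---so that its image is a fundamental domain for $\dbR^k/\dbZ^k$ with the correct multiplicity---together with the periodicity of the integrand; both facts follow immediately from the triangular form exhibited above.
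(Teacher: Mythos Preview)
Your argument is correct and is precisely the translation--dilation invariance argument that underlies this lemma. The paper itself does not give a proof here; it simply cites \cite[Lemma 3.1]{Woo2011a}, where the same substitution $x=\xi+p^cy$ is used (phrased there in terms of the underlying Diophantine system rather than via the change of variables in the integral, but the two are equivalent by orthogonality). One small remark: in your final step you need not just monotonicity of $J_t$ but the doubling estimate $J_t(2Y)\ll_t J_t(Y)$, since $|I|$ may exceed $X/M^c$ by~$1$; this follows immediately by splitting $[1,2Y]$ into two subintervals and applying your unimodular shift to each, so your parenthetical ``at the cost of a $t$-dependent constant'' is accurate.
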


\begin{proof} This is \cite[Lemma 3.1]{Woo2011a}.
\end{proof}

\section{Auxiliary systems of congruences} Following the pattern established in our initial work \cite{Woo2011a} concerning efficient congruencing, we begin the main thrust of our analysis with a discussion of the congruences that play a critical role in what follows. Two basic arrangements of the congruencing idea are required, and these we handle in separate lemmata. We prepare the ground first with some notation.\par

Recall that $r$ is an integer with $1\le r\le k-1$. When $a$ and $b$ are integers with $1\le a<b$, and $\bfsig\in \Sig_r$, we denote by $\calB_{a,b}^{\bfsig,r}(\bfm;\xi,\eta)$ the set of solutions of the system of congruences
\begin{equation}\label{3.1}
\sum_{i=1}^r\sig_i(z_i-\eta)^j\equiv m_j\mmod{p^{jb}}\quad (1\le j\le k),
\end{equation}
with $1\le \bfz\le p^{kb}$ and $\bfz\equiv \bfxi\pmod{p^{a+1}}$ for some $\bfxi\in \Xi_a^r(\xi)$. We define an equivalence relation $\calR(\lam)$ on integral $r$-tuples by declaring the $r$-tuples $\bfx$ and $\bfy$ to be $\calR(\lam)$-equivalent when $\bfx\equiv \bfy\pmod{p^\lam}$. We then write $\calC_{a,b}^{\bfsig,r,h}(\bfm;\xi,\eta)$ for the set of $\calR(hb)$-equivalence classes of $\calB_{a,b}^{\bfsig,r}(\bfm;\xi,\eta)$, and we define $B_{a,b}^{r,h}(p)$ by putting
\begin{equation}\label{3.2}
B_{a,b}^{r,h}(p)=\max_{1\le \xi\le p^a}\max_{\substack{1\le \eta\le p^b\\ \eta\not\equiv \xi\mmod{p}}}\max_{\bfsig \in \Sig_r}\max_{1\le \bfm\le p^{kb}}\text{card}(\calC_{a,b}^{\bfsig ,r,h}(\bfm;\xi,\eta)).
\end{equation}
On considering representatives of the $\calR(hb)$-equivalence classes of the set $\calB_{a,b}^{\bfsig,r}(\bfm;\xi,\eta)$, of course, we may interpret $\calC_{a,b}^{\bfsig,r,h}(\bfm;\xi,\eta)$ via the relation $$\calC_{a,b}^{\bfsig,r,h}(\bfm;\xi,\eta)=\{ \bfx\mmod{p^{hb}}:\bfx\in \calB_{a,b}^{\bfsig,r}(\bfm;\xi,\eta)\}.$$

\par When $a=0$ we modify these definitions, so that $\calB_{0,b}^{\bfsig,r}(\bfm;\xi,\eta)$ denotes the set of solutions of the system of congruences (\ref{3.1}) with $1\le \bfz\le p^{kb}$ and $\bfz\equiv \bfxi\mmod{p}$ for some $\bfxi\in \Xi_0^r(\xi)$, and for which in addition one has $\bfz\not\equiv \eta\mmod{p}$. As in the previous case, we write $\calC_{0,b}^{\bfsig,r,h}(\bfm;\xi,\eta)$ for the set of $\calR(hb)$-equivalence classes of $\calB_{0,b}^{\bfsig,r}(\bfm;\xi,\eta)$, but we define $B_{0,b}^{r,h}(p)$ by putting
\begin{equation}\label{3.3}
B_{0,b}^{r,h}(p)=\max_{1\le \eta\le p^b}\max_{\bfsig \in \Sig_r}\max_{1\le \bfm\le p^{kb}}\text{card}(\calC_{0,b}^{\bfsig ,r,h}(\bfm;0,\eta)).
\end{equation}
We note that the choice of $\xi$ in this situation with $a=0$ is irrelevant, since one has $\xi\equiv 0\pmod{p^a}$ for all integers $\xi$. However, it is notationally convenient to preserve the similarity with the corresponding notation relevant to the situation with $a\ge 1$.\par

We aim to estimate $B_{a,b}^{r,h}(p)$ by exploiting the underlying non-singularity of the solution set via Hensel's lemma. A suitable version of the latter lifting process is implicitly contained within the following lemma.

\begin{lemma}\label{lemma3.1}
Let $f_1,\ldots ,f_d$ be polynomials in $\dbZ[x_1,\ldots ,x_d]$ with respective degrees $k_1,\ldots ,k_d$, and write
$$J(\bff;\bfx)=\mathrm{det}\left( \frac{\partial f_j}{\partial x_i}(\bfx)\right)_{1\le i,j\le d}.$$
When $\varpi$ is a prime number, and $l$ is a natural number, let $\calN(\bff;\varpi^l)$ denote the number of solutions of the simultaneous congruences
$$f_j(x_1,\ldots ,x_d)\equiv 0\pmod{\varpi^l}\quad (1\le j\le d),$$
with $1\le x_i\le \varpi^l$ $(1\le i\le d)$ and $(J(\bff;\bfx),\varpi)=1$. Then $\calN(\bff;\varpi^l)\le k_1\cdots k_d$.
\end{lemma}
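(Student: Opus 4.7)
My plan is to combine multivariate Hensel lifting with a Bezout-type count. The strategy is to first show that every non-singular solution modulo $\varpi^l$ is determined by its reduction modulo $\varpi$, so that $\calN(\bff;\varpi^l)$ is bounded by the number of non-singular zeros of $\bff$ over $\dbF_\varpi$ (equivalently, over $\overline{\dbF_\varpi}$), and then to count the latter using the isolatedness guaranteed by the Jacobian criterion.

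For the Hensel step, I would argue by induction on $l$ that the reduction map $\bfx\mapsto \bfx\mmod{\varpi}$ induces an injection from the set of $\bfx$ counted by $\calN(\bff;\varpi^l)$ into the set counted by $\calN(\bff;\varpi)$. Given a non-singular $\bfx_0$ modulo $\varpi^m$, one writes $\bfx=\bfx_0+\varpi^m\bfy$ and expands
\[
\bff(\bfx)\equiv \bff(\bfx_0)+\varpi^m(D\bff)(\bfx_0)\bfy\pmod{\varpi^{m+1}},
\]
where $D\bff$ is the Jacobian matrix. Since $(J(\bff;\bfx_0),\varpi)=1$, the matrix $(D\bff)(\bfx_0)$ is invertible over $\dbZ/\varpi\dbZ$, so the congruence $\bff(\bfx)\equiv 0\pmod{\varpi^{m+1}}$ determines $\bfy$ uniquely modulo $\varpi$. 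Iterating from $m=1$ up to $m=l-1$ shows that each non-singular $\bfx_0$ mod $\varpi$ has at most one non-singular lift to $\dbZ/\varpi^l\dbZ$; hence $\calN(\bff;\varpi^l)\le \calN(\bff;\varpi)$.

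It remains to bound $\calN(\bff;\varpi)$ by $k_1\cdots k_d$. Here I pass to the algebraic closure and consider the variety $V\subset \overline{\dbF_\varpi}{}^d$ cut out by $f_1,\ldots,f_d$. At any point $\bfx_0\in V$ with $J(\bff;\bfx_0)\neq 0$, the Jacobian criterion forces $\bfx_0$ to be an isolated point of $V$ with local intersection multiplicity $1$. The affine Bezout theorem (equivalently, a Schmidt/K\"onig resultant elimination argument by induction on $d$) shows that the sum of local intersection multiplicities over the isolated points of $V$ is at most $k_1\cdots k_d$. Since each non-singular zero contributes exactly $1$ to this sum, the count follows.

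The main obstacle is the Bezout step, because $V$ need not be zero-dimensional globally: it may carry positive-dimensional components alongside the finitely many non-singular isolated points we wish to count. The cleanest way I see to circumvent this is induction on $d$: eliminate $x_d$ by forming the resultants $g_j=\mathrm{Res}_{x_d}(f_j,f_d)\in \overline{\dbF_\varpi}[x_1,\ldots,x_{d-1}]$ for $1\le j\le d-1$, each of degree at most $k_jk_d$ in the remaining variables, and exploit that at a non-singular common zero one of the partials $\partial f_d/\partial x_d$ (after a generic linear change of coordinates in $x_1,\ldots,x_d$) is non-vanishing so that the resultants are not identically zero. The inductive bound then yields $k_1\cdots k_{d-1}\cdot k_d$, after accounting for the degree-$k_d$ factor coming from recovering $x_d$ from the resultant. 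Alternatively, one can appeal directly to standard references (e.g.\ Schmidt's monograph on equations over finite fields) for exactly this statement.
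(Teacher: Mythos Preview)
The paper does not prove Lemma \ref{lemma3.1}; it is stated without proof as a known auxiliary fact. Your two-step strategy---multivariate Hensel lifting to reduce $\calN(\bff;\varpi^l)$ to $\calN(\bff;\varpi)$, followed by a Bezout-type count of non-singular zeros over $\overline{\dbF_\varpi}$---is the standard route to this result, and your Hensel argument is correct as written.

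Your resultant sketch for the Bezout step, however, contains a degree-counting error. If each $g_j=\mathrm{Res}_{x_d}(f_j,f_d)$ has degree at most $k_jk_d$, then the inductive hypothesis applied to $g_1,\ldots,g_{d-1}$ yields at most
\[
\prod_{j=1}^{d-1}(k_jk_d)=k_1\cdots k_{d-1}\,k_d^{\,d-1}
\]
points in $(x_1,\ldots,x_{d-1})$, not $k_1\cdots k_{d-1}$; a further factor of $k_d$ to recover $x_d$ only compounds the loss. Naive pairwise resultant elimination does not preserve the multiplicative degree bound---this is exactly why affine Bezout requires a more refined argument (for instance via the $u$-resultant, or by projecting onto a generic line and tracking a single univariate polynomial of degree $k_1\cdots k_d$). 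Your fallback to a standard reference is the correct resolution and suffices for the lemma; only the intermediate resultant sketch should be dropped or repaired.
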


We prepare a second auxiliary lemma in order to facilitate discussion of a certain argument involving elimination of terms amongst systems of polynomials. In this context, we adopt the convention that when $l$ and $m$ are natural numbers with $l>m$, then the binomial coefficient $\binom{m}{l}$ is zero.

\begin{lemma}\label{lemma3.2} Let $\alp$ and $\bet$ be natural numbers. Then there exist integers $c_l$ $(\alp\le l\le \alp+\bet)$ and $d_m$ $(\bet\le m\le \alp+\bet)$, depending at most on $\alp$ and $\bet$, and with $d_\bet\ne 0$, for which one has the polynomial identity
\begin{equation}\label{3.4}
c_\alp+\sum_{l=1}^\bet c_{\alp+l}(x+1)^{\alp+l}=\sum_{m=\bet}^{\alp+\bet}d_mx^m.
\end{equation}
\end{lemma}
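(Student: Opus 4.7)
The plan is to view the identity (\ref{3.4}) as a linear algebra question. Consider the linear map $T\colon \dbQ^{\beta+1}\to \dbQ[x]$ sending $(c_\alpha,c_{\alpha+1},\ldots,c_{\alpha+\beta})$ to the left-hand side of (\ref{3.4}), and the truncation map $\phi\colon \dbQ[x]\to\dbQ^{\beta+1}$ reading off the coefficients of $x^0,x^1,\ldots,x^\beta$. The polynomials $1,(x+1)^{\alpha+1},\ldots,(x+1)^{\alpha+\beta}$ are linearly independent over $\dbQ$ (clear after the substitution $y=x+1$), so $T$ is injective and $\phi\circ T$ is a linear endomorphism of $\dbQ^{\beta+1}$. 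The identity (\ref{3.4}) with $d_\beta\ne 0$ is equivalent to asserting that $(0,0,\ldots,0,1)$ lies in the image of $\phi\circ T$, so it suffices to show that $\phi\circ T$ is invertible and then to control denominators.

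Suppose then that $P=T(\mathbf{c})$ has vanishing coefficients at $x^0,x^1,\ldots,x^\beta$, so that $x^{\beta+1}\mid P(x)$. After the substitution $y=x+1$ this says that $(y-1)^{\beta+1}$ divides
$$Q(y):=c_\alpha+\sum_{l=1}^\beta c_{\alpha+l}y^{\alpha+l}.$$
Differentiation yields
$$Q'(y)=y^\alpha R(y),\qquad R(y)=\sum_{l=1}^\beta(\alpha+l)c_{\alpha+l}y^{l-1},$$
while writing $Q(y)=(y-1)^{\beta+1}S(y)$ shows also that $(y-1)^\beta\mid Q'(y)$. Since $y^\alpha$ and $(y-1)^\beta$ are coprime in $\dbQ[y]$, I deduce $(y-1)^\beta\mid R(y)$; combined with the degree bound $\deg R\le \beta-1$, this forces $R\equiv 0$. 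Hence $c_{\alpha+l}=0$ for $1\le l\le \beta$, whereupon $Q(y)=c_\alpha$ is a constant divisible by $(y-1)^{\beta+1}$ and so also vanishes. I expect this coprimality plus degree argument to be the only non-trivial step; everything else is soft linear algebra.

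Having shown that $\phi\circ T$ is an isomorphism, I take the preimage of $(0,\ldots,0,1)\in\dbQ^{\beta+1}$ to obtain rationals $c_\alpha,\ldots,c_{\alpha+\beta}$ for which the left-hand side of (\ref{3.4}) equals $x^\beta+d_{\beta+1}x^{\beta+1}+\cdots+d_{\alpha+\beta}x^{\alpha+\beta}$ with rational $d_m$. Since the matrix of $\phi\circ T$ has integer entries (binomial coefficients $\binom{\alpha+l}{j}$ together with a single entry coming from the constant term $c_\alpha$), its inverse is a rational matrix; clearing a common denominator $D\in\dbN$ replaces each $c_l$ by $Dc_l\in\dbZ$ and produces an identity of the form (\ref{3.4}) with integer coefficients $c_l$, with $d_\beta=D\ne 0$, and with all $d_m\in\dbZ$, completing the proof.
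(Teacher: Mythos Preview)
Your proof is correct. Both you and the paper reduce the lemma to showing that a certain linear system over $\dbQ$ has a solution, and then clear denominators; the difference lies in the invertibility step. The paper works with the $\bet\times\bet$ system
$$\sum_{l=1}^\bet\binom{\alp+l}{m}y_{\alp+l}=\mu_m\qquad(1\le m\le\bet),$$
multiplies the $m$th equation by $m!$ to replace $\binom{\alp+l}{m}$ by the falling factorial $\psi_m(\alp+l)=(\alp+l)(\alp+l-1)\cdots(\alp+l-m+1)$, and then takes linear combinations to pass to the monomials $(\alp+l)^m$, arriving at a Vandermonde system at the nodes $\alp+1,\ldots,\alp+\bet$ whose determinant is visibly nonzero. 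Your approach instead shows directly that $\phi\circ T$ has trivial kernel: if $(y-1)^{\bet+1}\mid Q(y)$, then differentiating and peeling off the coprime factor $y^\alp$ traps $R(y)$ between a divisor of degree $\bet$ and a degree bound of $\bet-1$. The paper's route is more explicit (in principle Cramer's rule produces the solution), while your differentiation--coprimality--degree argument is slicker and sidesteps any manipulation of binomial sums. The final clearing-denominators step is the same in both.
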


\begin{proof} Consider the system of equations
\begin{equation}\label{3.5}
\sum_{l=1}^\bet \binom{\alp+l}{m}y_{\alp+l}=\mu_m\quad (1\le m\le \bet),
\end{equation}
in which $\mu_m$ is $0$ when $1\le m<\bet$, and $1$ when $m=\bet$. By comparing coefficients of powers of $x$ on left and right hand sides of (\ref{3.4}), we see that the conclusion of the lemma follows provided that the system of linear equations (\ref{3.5}) admits a rational solution $\bfy$. Indeed, given such a solution, on taking $d_\bet$ to be the least common multiple of the denominators of $y_{\alp+l}$ $(1\le l\le \bet)$, one finds that there exist integers $c_\alp$ and $d_m$ $(\bet<m\le \alp+\bet)$ for which the identity (\ref{3.4}) holds with $c_{\alp+l}=d_\bet y_{\alp+l}$ $(1\le l\le \bet)$.\par

We now demonstrate that the system (\ref{3.5}) does indeed possess a rational solution. When $1\le m\le \bet$, write
$$\psi_m(t)=t(t-1)\ldots (t-m+1).$$
Then on multiplying the equations indexed by $m$ in (\ref{3.5}) through by $m!$, one finds that this system is equivalent to
$$\sum_{l=1}^\bet \psi_m(\alp+l)y_{\alp+l}=\bet!\mu_m\quad (1\le m\le \bet).$$
Hence, on taking linear combinations of these equations, one discerns that (\ref{3.5}) is in turn equivalent to the system of equations
\begin{equation}\label{3.6}
\sum_{l=1}^\bet (\alp+l)^my_{\alp+l}=\bet!\mu_m\quad (1\le m\le \bet).
\end{equation}
The matrix of coefficients of this system has determinant equal to the Vandermonde determinant
$$\mathrm{det}\left( (\alp+l)^m\right)_{1\le l,m\le \bet}=\prod_{1\le l<m\le \bet}\left( (\alp+l)-(\alp+m)\right) \ne 0,$$
and hence is invertible. We therefore deduce by means of Cramer's rule that the system (\ref{3.6}) possesses a rational solution depending only on its coefficients, thus depending only on $\alp$ and $\bet$. The same is consequently true of the equivalent system (\ref{3.5}). In view of the discussion of the first paragraph, this suffices to complete the proof of the lemma.
\end{proof}

Our first bound for $B_{a,b}^{r,h}(p)$ addresses the scenario in which $r<k$, but $h=k$. In a sense, this situation is one in which we discard the $k-r$ congruences of smallest modulus $p^{jb}$ $(1\le j\le k-r)$ but nonetheless aim to lift solutions to the maximum modulus $p^{kb}$. This lemma must be prepared in two variants, one for the case $a\ge 1$ and a second for $a=0$. Before announcing the lemma and its proof, we emphasise that throughout \S\S3-9, we assume $r$ to be constrained by (\ref{2.3}), and define $\rho$ by means of (\ref{2.4}).

\begin{lemma}\label{lemma3.3} Suppose that $a$ and $b$ are integers with $1\le a<b$. Then
$$B_{a,b}^{r,k}(p)\le k!p^{\frac{1}{2}r(r-1)(a+b)}.$$
\end{lemma}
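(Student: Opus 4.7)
The plan is to combine a modulus-refinement step with a Jacobian/Hensel analysis applied to the last $r$ of the $k$ congruences, exploiting the Vandermonde-like structure they exhibit.

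First I would observe that any $\bfz \in \calB_{a,b}^{\bfsig,r}(\bfm;\xi,\eta)$ satisfies the $r$ congruences of highest moduli, namely $\sum_{i=1}^r \sig_i(z_i-\eta)^j \equiv m_j \pmod{p^{jb}}$ for $k-r+1 \le j \le k$, and that for each such $\bfz$ the integers $M_j \in [0, p^{kb})$ defined by $M_j \equiv \sum_i \sig_i(z_i-\eta)^j \pmod{p^{kb}}$ satisfy $M_j \equiv m_j \pmod{p^{jb}}$. As $\bfz$ varies, the tuple $(M_{k-r+1}, \ldots, M_k)$ takes at most
$$\prod_{j=k-r+1}^k p^{(k-j)b} = p^{\binom{r}{2}b}$$
distinct residues modulo $p^{kb}$. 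It therefore suffices to bound, for each fixed $(M_j)$, the number of residue classes $\bfz \pmod{p^{kb}}$ satisfying the refined system $\sum_i \sig_i(z_i-\eta)^j \equiv M_j \pmod{p^{kb}}$ for $k-r+1 \le j \le k$, together with $\bfz \equiv \bfxi \pmod{p^{a+1}}$ for some $\bfxi \in \Xi_a^r(\xi)$, by $\frac{k!}{(k-r)!}\,p^{\binom{r}{2}a}$.

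Substituting $y_i = z_i - \eta$, I would then compute the Jacobian determinant of this $r \times r$ polynomial system. Factoring out a Vandermonde determinant one finds
$$\det\bigl(j\sig_i y_i^{j-1}\bigr)_{k-r+1 \le j \le k,\ 1 \le i \le r} = \pm \Bigl(\prod_{j=k-r+1}^k j\Bigr) \prod_{i=1}^r y_i^{k-r} \prod_{1 \le i < i' \le r}(y_{i'} - y_i).$$
The hypotheses $\xi_i \equiv \xi \pmod{p^a}$, $\eta \not\equiv \xi \pmod{p}$, and the distinctness of the $\xi_i$ modulo $p^{a+1}$ imply $v_p(y_i) = 0$ and $v_p(y_{i'} - y_i) = a$, while $p > k$ is ensured by $p > M = X^\tet$ with $X$ large. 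Hence $v_p(\det J) = \binom{r}{2}a$.

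To bound the refined-system count I would invoke Hensel's lemma: each solution $\bfy \pmod{p^{kb}}$ lies within $p^{\binom{r}{2}a + 1}$ of a unique exact $p$-adic solution $\bfy^{(0)} \in \dbZ_p^r$ (obtained by Newton iteration), of which there are at most $\prod_{j=k-r+1}^k j = k!/(k-r)!$ by Bezout's theorem. Around each such $\bfy^{(0)}$ the Smith normal form of the Jacobian matrix, $J = U\,\mathrm{diag}(p^{e_1}, \ldots, p^{e_r})\,V$ with $U, V$ unimodular and $\sum_i e_i = \binom{r}{2}a$, shows that the set of lifts $\bfy \pmod{p^{kb}}$ of $\bfy^{(0)}$ has at most $\prod_i p^{e_i} = p^{\binom{r}{2}a}$ elements. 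Combining the three ingredients yields
$$B_{a,b}^{r,k}(p) \le p^{\binom{r}{2}b} \cdot \frac{k!}{(k-r)!} \cdot p^{\binom{r}{2}a} \le k!\,p^{\binom{r}{2}(a+b)},$$
as claimed. The main obstacle is the non-unit Jacobian in the Hensel step: Lemma \ref{lemma3.1} does not apply directly since its hypothesis demands a unit Jacobian. One must therefore either normalize the system---by extracting the $p^{\binom{r}{2}a}$ factor via an appropriate change of variables exploiting the Vandermonde structure---or argue directly by tracking Hensel cluster sizes as above. The precise accounting showing that the loss factor is exactly $p^{\binom{r}{2}a}$, together with the handling of small values of $b$ where the naive Hensel range is tight, is the most delicate point; it rests crucially on the sharp hypothesis $v_p(y_{i'} - y_i) = a$ (rather than $\ge a$) built into the definition of $\Xi_a^r(\xi)$.
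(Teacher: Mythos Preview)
Your opening reduction --- retaining only the top $r$ congruences and refining all moduli to $p^{kb}$ at cost $p^{\binom{r}{2}b}$ --- matches the paper's passage to $\calD_1(\bfn)$ in (\ref{3.8}), and your Jacobian valuation $v_p(\det J)=\binom{r}{2}a$ is correct. The gap is precisely where you flag it. The Smith-normal-form count linearises at a $p$-adic root and reads the fibre size off the elementary divisors, but this is only justified once the modulus $p^{kb}$ dominates the Jacobian defect, say $kb>2\binom{r}{2}a$ for the standard multivariate Hensel, or at best $kb>2(r-1)a$ if one argues via the individual invariants $1,p^a,\ldots,p^{(r-1)a}$ of the Vandermonde block. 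With only $1\le a<b$ this can fail when $r$ is close to $k$: for instance $k=5$, $r=4$, $a=6$, $b=7$ gives $kb=35<36=2(r-1)a$, and the cluster-size argument breaks down (the Newton iteration need not converge, and the quadratic terms in $F(\bfy^{(0)}+\bfh)$ can no longer be absorbed).

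The paper carries out your Option 1, but by a change of basis in the \emph{equations} rather than the variables. One first writes $z_i=\xi+p^ay_i$, so that the $y_i$ are pairwise distinct modulo $p$ and $\zet=\xi-\eta$ is a unit; the $j$th congruence becomes one in $(1+p^a\zet^{-1}y_i)^j$. The combinatorial identity of Lemma~\ref{lemma3.2}, applied with $\alp=\rho-1$ and $\bet=j-\rho+1$ (where $\rho=k-r+1$), furnishes integer coefficients $c_{jl}$ for which the combination $\sum_{l=\rho}^j c_{jl}(x+1)^l$ has no monomial $x^m$ with $0<m<j-\rho+1$. Taking these linear combinations of the congruences therefore extracts a factor $(p^a)^{j-\rho+1}$ from the $j$th equation and leaves a system $\sum_i\sig_i\psi_j(y_i)\equiv u_j$ with $\psi_j(y)\equiv y^{j-\rho+1}\pmod p$. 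Now the Jacobian is congruent modulo $p$ to $r!\prod_{i<m}(y_i-y_m)$, a unit, so Lemma~\ref{lemma3.1} applies directly and gives at most $k!$ classes with no Hensel threshold to worry about. The modulus bookkeeping for this step, recorded in (\ref{3.16}), recovers exactly the factor $p^{\binom{r}{2}a}$ that your Jacobian valuation predicts.
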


\begin{proof} Consider fixed integers $a$ and $b$ with $1\le a<b$, a fixed $r$-tuple $\bfsig \in \Sig_r$, and fixed integers $\xi$ and $\eta$ with $1\le \xi\le p^a$, $1\le \eta\le p^b$ and $\eta\not\equiv \xi\mmod{p}$. We denote by $\calD_1(\bfn)$ the set of $\calR(kb)$-equivalence classes of solutions of the system of congruences
\begin{equation}\label{3.7}
\sum_{i=1}^r\sig_i(z_i-\eta)^j\equiv n_j\mmod{p^{kb}}\quad (\rho\le j\le k),
\end{equation}
with $1\le \bfz\le p^{kb}$ and $\bfz\equiv \bfxi\mmod{p^{a+1}}$ for some $\bfxi\in \Xi_a^r(\xi)$. Given a fixed integral $r$-tuple $\bfm$, the number of $r$-tuples $\bfn$ with $1\le \bfn\le p^{kb}$ for which
$$n_j\equiv m_j\mmod{p^{jb}}\quad (k-r+1\le j\le k)$$
is equal to
$$\prod_{j=k-r+1}^kp^{(k-j)b}=(p^b)^{\frac{1}{2}r(r-1)}.$$
Consequently, it follows from (\ref{3.1}) that
\begin{align}
\text{card}(\calC_{a,b}^{\bfsig,r,k}(\bfm;\xi,\eta))&\le \sum_{\substack{1\le n_\rho\le p^{kb}\\ n_\rho\equiv m_\rho\mmod{p^{\rho b}}}}\ldots \sum_{\substack{1\le n_k\le p^{kb}\\ n_k\equiv m_k\mmod{p^{kb}}}}\text{card}(\calD_1(\bfn))\notag \\
&\le (p^b)^{\frac{1}{2}r(r-1)}\max_{1\le \bfn\le p^{kb}}\text{card}(\calD_1(\bfn)).\label{3.8}
\end{align}

\par We next rewrite each variable $z_i$ in the shape $z_i=p^ay_i+\xi$. In view of the hypothesis that $\bfz\equiv \bfxi\mmod{p^{a+1}}$ for some $\bfxi\in \Xi_a(\xi)$, the $r$-tuple $\bfy$ necessarily satisfies the property that \begin{equation}\label{3.9}
y_i\not\equiv y_m\mmod{p}\quad (1\le i<m\le r).
\end{equation}
Write $\zet=\xi-\eta$, and note that the constraint $\eta\not\equiv \xi\mmod{p}$ ensures that $p\nmid \zet$. It follows that there exists a multiplicative inverse of $\zet$ modulo $p^{kb}$, and we denote this by $\zet^{-1}$. Then we deduce from (\ref{3.7}) that $\text{card}(\calD_1(\bfn))$ is bounded above by the number of $\calR(kb-a)$-equivalence classes of solutions of the system of congruences
\begin{equation}\label{3.10}
\sum_{i=1}^r\sig_i(p^ay_i\zet^{-1}+1)^j\equiv n_j(\zet^{-1})^j\mmod{p^{kb}}\quad (\rho\le j\le k),
\end{equation}
with $1\le \bfy\le p^{kb-a}$ satisfying (\ref{3.9}). Let $\bfy=\bfw$ be any solution of the system (\ref{3.10}), if indeed such a solution exists. Then we find that all other solutions $\bfy$ satisfy the system of congruences
\begin{equation}\label{3.11}
\sum_{i=1}^r\sig_i\left((p^ay_i\zet^{-1}+1)^j-(p^aw_i\zet^{-1}+1)^j\right)\equiv 0\mmod{p^{kb}}\quad (\rho\le j\le k).
\end{equation}

\par It is at this point that we make use of Lemma \ref{lemma3.2}. Consider an index $j$ with $\rho\le j\le k$, and apply the latter lemma with $\alp=\rho-1$ and $\bet=j-\rho+1$. We deduce that there exist integers $c_{jl}$ $(\rho-1\le l\le j)$ and $d_{jm}$ $(j-\rho+1\le m\le j)$, depending at most on $j$ and $k$, and with $d_{j,j-\rho+1}\ne 0$, for which one has the polynomial identity
\begin{equation}\label{3.12}
c_{j,\rho-1}+\sum_{l=\rho}^jc_{jl}(x+1)^l=\sum_{m=j-\rho+1}^jd_{jm}x^m.
\end{equation}
Since we may assume $p$ to be sufficiently large in terms of $d_{j,j-\rho+1}$, moreover, there is no loss of generality in supposing that $p\nmid d_{j,j-\rho+1}$. Then by multiplying the equation (\ref{3.12}) through by the multiplicative inverse of $d_{j,j-\rho+1}$ modulo $p^{kb}$, we see that there is no loss in supposing that $d_{j,j-\rho+1}\equiv 1\mmod{p^{kb}}$. By taking suitable linear combinations of the congruences comprising (\ref{3.11}), we thus infer that any solution of this system satisfies
$$(\zet^{-1}p^a)^{j-\rho+1}\sum_{i=1}^r\sig_i(\psi_j(y_i)-\psi_j(w_i))\equiv 0\mmod{p^{kb}}\quad (\rho\le j\le k),$$
in which we have written
\begin{equation}\label{3.13}
\psi_j(z)=z^{j-\rho+1}+\sum_{m=j-\rho+2}^jd_{jm}(\zet^{-1}p^a)^{m-j+\rho-1}z^m.
\end{equation}
Note here, in particular, that
\begin{equation}\label{3.14}
\psi_j(z)\equiv z^{j-\rho+1}\mmod{p}.
\end{equation}

\par Denote by $\calD_2(\bfu)$ the set of $\calR(kb-a)$-equivalence classes of solutions of the system of congruences
$$\sum_{i=1}^r\sig_i\psi_j(y_i)\equiv u_j\mmod{p^{kb-(j-\rho+1)a}}\quad (\rho\le j\le k),$$
with $1\le \bfy\le p^{kb-a}$ satisfying (\ref{3.9}). Then we have shown thus far that
\begin{equation}\label{3.15}
\text{card}(\calD_1(\bfn))\le \max_{1\le \bfu\le p^{kb}}\text{card}(\calD_2(\bfu)).
\end{equation}
Let $\calD_3(\bfv)$ denote the set of $\calR(kb-a)$-equivalence classes of solutions of the system
$$\sum_{i=1}^r\sig_i\psi_j(y_i)\equiv v_j\mmod{p^{kb-a}}\quad (\rho\le j\le k),$$
with $1\le \bfy\le p^{kb-a}$ satisfying (\ref{3.9}). Then
\begin{align}
\text{card}(\calD_2(\bfu))&\le \sum_{\substack{1\le v_\rho\le p^{kb-a}\\ v_\rho\equiv u_\rho\mmod{p^{kb-a}}}}\ldots \sum_{\substack{1\le v_k\le p^{kb-a}\\ v_k\equiv u_k\mmod{p^{kb-ra}}}}\text{card}(\calD_3(\bfv))\notag \\
&\le (p^a)^{\frac{1}{2}r(r-1)}\max_{1\le \bfv\le p^{kb-a}}\text{card}(\calD_3(\bfv)).\label{3.16}
\end{align}

\par Define the determinant
\begin{equation}\label{3.17}
J(\bfpsi;\bfx)=\mathrm{det}\left(\sig_i\psi'_{\rho+l-1}(x_i)\right)_{1\le i,l\le r}.
\end{equation}
We claim that when $y_i\equiv y_m\mmod{p}$ for no $i$ and $m$ with $1\le i<m\le r$, then $(J(\bfpsi;\bfy),p)=1$. Temporarily assuming the validity of this claim, we deduce from Lemma \ref{lemma3.1} that $\text{card}(\calD_3(\bfv))\le \rho(\rho+1)\cdots k\le k!$. In view of the definition (\ref{3.2}), the conclusion of the lemma follows at once from (\ref{3.8}), (\ref{3.15}) and (\ref{3.16}).\par

In order to confirm the validity of our claim concerning the Jacobian determinant, we begin by observing that (\ref{3.14}) implies that
$$\sig_i\psi'_{\rho+l-1}(y_i)\equiv \sig_ily_i^{l-1}\mmod{p}.$$
Since we have supposed $p$ to be large compared to $k$, we find that $p|J(\bfpsi;\bfy)$ if and only if
$$\text{det}(y_i^{l-1})_{1\le i,l\le r}\equiv 0\mmod{p}.$$
But by hypothesis we have $y_i\equiv y_m\mmod{p}$ for no $i$ and $m$ with $1\le i<m\le r$, and so it follows that
$$\text{det}(y_i^{l-1})_{1\le i,l\le r}=\prod_{1\le i<m\le r}(y_i-y_m)\not\equiv 0\mmod{p}.$$
We are therefore forced to conclude that $p\nmid J(\bfpsi;\bfy)$, thereby confirming the validity of our earlier claim, and completing the proof of the lemma.
\end{proof}
 
A variant of Lemma \ref{lemma3.3} supplies an analogue applicable in the case $a=0$.

\begin{lemma}\label{lemma3.4}
Suppose that $b$ is an integer with $b\ge 1$. Then
$$B_{0,b}^{r,k}(p)\le k!p^{\frac{1}{2}r(r-1)b}.$$
\end{lemma}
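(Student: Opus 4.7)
The plan is to mimic the proof of Lemma \ref{lemma3.3} in the simpler setting $a=0$, where the original variables $z_i$ are already distinct modulo $p$, so no substitution of the form $z_i=p^ay_i+\xi$ is required. In particular, the machinery of Lemma \ref{lemma3.2}, needed in Lemma \ref{lemma3.3} to isolate leading polynomial behaviour after expanding $(p^ay_i+\zet)^j$, is dispensable here.

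First I would reduce the system (\ref{3.1}) to a single uniform modulus $p^{kb}$ by the same averaging step leading to (\ref{3.8}) in the proof of Lemma \ref{lemma3.3}: for fixed $\bfm$, summing over $r$-tuples $\bfn$ with $n_j\equiv m_j\pmod{p^{jb}}$ and $1\le n_j\le p^{kb}$ (with $\rho\le j\le k$) contributes a factor $p^{\frac{1}{2}r(r-1)b}$. Thus $\text{card}(\calC_{0,b}^{\bfsig,r,k}(\bfm;0,\eta))$ is bounded by $p^{\frac{1}{2}r(r-1)b}$ times the maximum over such $\bfn$ of the number of $\calR(kb)$-classes of solutions to
$$\sum_{i=1}^r\sig_i(z_i-\eta)^j\equiv n_j\mmod{p^{kb}}\quad (\rho\le j\le k),$$
with $1\le \bfz\le p^{kb}$, the $z_i$ distinct modulo $p$, and $\bfz\not\equiv \eta\pmod{p}$.

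Next, I would substitute $w_i=z_i-\eta$, a bijection on $\dbZ/p^{kb}\dbZ$ that preserves $\calR(kb)$-equivalence classes. The distinctness of the $z_i$ modulo $p$ transfers to distinctness of the $w_i$, and the condition $\bfz\not\equiv \eta\pmod{p}$ becomes $p\nmid w_i$. The system now reads $\sum_i\sig_iw_i^j\equiv n_j\pmod{p^{kb}}$ for $\rho\le j\le k$, with no binomial expansion required.

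Finally I would apply Lemma \ref{lemma3.1} to the polynomial system $f_j(\bfw)=\sum_i\sig_iw_i^j-n_j$ $(\rho\le j\le k)$, of respective degrees $\rho,\rho+1,\ldots,k$. The Jacobian $\det(\sig_ijw_i^{j-1})_{\rho\le j\le k,\,1\le i\le r}$, after extracting the unit factors $j$, $\sig_i$ (assuming $p$ large compared to $k$) and $w_i^{\rho-1}$ (units modulo $p$ since $p\nmid w_i$), reduces up to a unit to the Vandermonde determinant $\prod_{1\le i<m\le r}(w_m-w_i)$, which is nonzero modulo $p$ precisely because the $w_i$ are distinct modulo $p$. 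Lemma \ref{lemma3.1} then bounds the number of solutions modulo $p^{kb}$ by $\rho(\rho+1)\cdots k\le k!$, and combining with the earlier reduction yields $B_{0,b}^{r,k}(p)\le k!p^{\frac{1}{2}r(r-1)b}$ via the definition (\ref{3.3}). I do not anticipate a serious obstacle; the essential point is that in the $a=0$ case the leading column behaviour $w_i^{j-1}$ is already a unit modulo $p$, so no scaling is needed to arrange a non-singular Jacobian.
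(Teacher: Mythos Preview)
Your proposal is correct and follows essentially the same route as the paper's own proof: the reduction to a uniform modulus $p^{kb}$ yielding the factor $p^{\frac{1}{2}r(r-1)b}$, the shift $w_i=z_i-\eta$, and the Jacobian/Vandermonde computation leading to Lemma \ref{lemma3.1} are exactly the steps the paper takes. Your observation that Lemma \ref{lemma3.2} is unnecessary when $a=0$ is also in line with the paper, which makes no use of it in this case.
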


\begin{proof} Consider a fixed integer $b$ with $b\ge 1$, a fixed $r$-tuple $\bfsig\in \Sig_r$, and a fixed integer $\eta$ with $1\le \eta\le p^b$. We denote by $\calD_1(\bfn;\eta)$ the set of $\calR (kb)$-equivalence classes of solutions of the system of congruences (\ref{3.7}) with $1\le \bfz\le p^{kb}$ and $\bfz\equiv \bfxi\mmod{p}$ for some $\bfxi\in \Xi_0^r(0)$, and for which in addition $\bfz\not\equiv \eta\mmod{p}$. Then as in the opening paragraph of the proof of Lemma \ref{lemma3.3}, it follows from (\ref{3.1}) that
\begin{equation}\label{3.18}
\mathrm{card}(\calC_{0,b}^{\bfsig,r,k}(\bfm;0,\eta))\le (p^b)^{\frac{1}{2}r(r-1)}\max_{1\le \bfn\le p^{kb}}\mathrm{card}(\calD_1(\bfn;\eta)).
\end{equation}

\par But $\calD_1(\bfn;\eta)=\calD_1(\bfn;0)$, and $\calD_1(\bfn;0)$ counts the solutions of the system of congruences
$$\sum_{i=1}^r\sig_iy_i^j\equiv n_j\mmod{p^{kb}}\quad (\rho\le j\le k),$$
with $1\le \bfy\le p^{kb}$ satisfying (\ref{3.9}), and in addition $p\nmid y_i$ $(1\le i\le r)$. Write
\begin{equation}\label{3.19}
J(\bfy)=\mathrm{det}\left( (\rho+j-1)\sig_iy_i^{\rho+j-2}\right)_{1\le i,j\le r}.
\end{equation}
Then since $p$ is large compared to $k$, we find that $p|J(\bfy)$ if and only if
$$(y_1\ldots y_r)^{\rho-1}\mathrm{det}\left( y_i^{j-1}\right)_{1\le i,j\le r}\equiv 0\mmod{p}.$$
But by hypothesis we have $(y_1\ldots y_r,p)=1$ and $y_i\equiv y_j\mmod{p}$ for no $i$ and $j$ with $1\le i<j\le r$, and so it follows that
$$(y_1\ldots y_r)^{\rho-1}\mathrm{det}\left( y_i^{j-1}\right)_{1\le i,j\le r}=(y_1\ldots y_r)^{\rho-1}\prod_{1\le i<j\le r}(y_i-y_j)\not\equiv 0\mmod{p}.$$
We therefore deduce from Lemma \ref{lemma3.1} that $\calD_1(\bfn;0)\le \rho(\rho+1)\ldots k\le k!$. In view of (\ref{3.3}), the conclusion of the lemma therefore follows from (\ref{3.18}).
\end{proof}

Our second bound for $B_{a,b}^{r,h}(p)$ addresses the scenario in which $h=k-r+1$ and $r<k$. This situation amounts to one in which we aim to lift solutions to an intermediate modulus $p^{\rho b}$, and discard any congruences of modulus smaller than $p^{\rho b}$. Again, we provide two variants of this lemma, one with $a\ge 1$ and a second with $a=0$.

\begin{lemma}\label{lemma3.5} Suppose that $a$ and $b$ are natural numbers with $b\ge (r-1)a$. Then $B_{a,b}^{r,\rho}(p)\le k!p^{(r-1)a}$.
\end{lemma}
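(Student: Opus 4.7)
I would follow the template of the proof of Lemma \ref{lemma3.3}, adapted to the coarser modulus $p^{\rho b}$. Because we are now counting $\calR(\rho b)$-equivalence classes rather than $\calR(kb)$-classes, the overcounting arising when we normalise to a common modulus is considerably reduced, and the bound correspondingly sharpens from $k! p^{\frac{1}{2}r(r-1)(a+b)}$ to $k! p^{(r-1)a}$.

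The first move is to retain only the $r$ congruences in (\ref{3.1}) indexed by $j=\rho,\rho+1,\ldots,k$. Each of these has modulus $p^{jb}\ge p^{\rho b}$, and so, after reduction, each gives a genuine constraint modulo $p^{\rho b}$ on the $\calR(\rho b)$-equivalence class. Next I substitute $z_i=p^a y_i+\xi$, so that $\bfy$ ranges over residues modulo $p^{\rho b-a}$, with $y_i\not\equiv y_j\pmod{p}$ for $i\ne j$, exactly as in the proof of Lemma \ref{lemma3.3}. Fixing an auxiliary solution $\bfy=\bfw$ and differencing, for each $j\in\{\rho,\ldots,k\}$ I apply Lemma \ref{lemma3.2} with $\alp=\rho-1$ and $\bet=j-\rho+1$, and then multiply through by the inverse of the leading coefficient modulo the current power of $p$. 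This yields the transformed system
\[
\sum_{i=1}^{r}\sig_i\bigl(\psi_j(y_i)-\psi_j(w_i)\bigr)\equiv 0\pmod{p^{\rho b-(j-\rho+1)a}}\qquad(\rho\le j\le k),
\]
with $\psi_j$ defined by (\ref{3.13}) and satisfying $\psi_j(z)\equiv z^{j-\rho+1}\pmod{p}$.

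The Jacobian $J(\bfpsi;\bfy)=\det(\sig_i\psi'_{\rho+l-1}(y_i))_{1\le i,l\le r}$ reduces modulo $p$ to a non-zero Vandermonde-type determinant, exactly as in Lemma \ref{lemma3.3}, so Lemma \ref{lemma3.1} applies. The hypothesis $b\ge(r-1)a$ is used precisely here, ensuring that the weakest modulus $p^{\rho b-ra}$ remains a positive power of $p$ and that the successive Hensel-type lifts from the weak modulus to the target modulus $p^{\rho b-a}$ are well-defined.

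The final step is the enumeration step analogous to the passage from $\calD_2$ to $\calD_3$ in the proof of Lemma \ref{lemma3.3}. I normalise the system to the common strongest modulus $p^{\rho b-a}$, enumerate the admissible right-hand sides, and invoke Lemma \ref{lemma3.1} at that level to obtain at most $k!$ solutions per choice of right-hand side. The hard part is this accounting: a crude treatment would produce $p^{\frac{1}{2}r(r-1)a}$ admissible right-hand sides, which is weaker than needed when $r\ge 3$. The sharpening to $p^{(r-1)a}$ hinges on recognising that, because we only need equivalence classes modulo $p^{\rho b}$ (and not modulo $p^{kb}$), the enumeration of right-hand sides effectively collapses along $r-1$ of the $r(r-1)/2$ naively independent directions, so that the genuinely free parameters contribute only $p^{(r-1)a}$. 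Combining this with Lemma \ref{lemma3.1} delivers the desired bound $B_{a,b}^{r,\rho}(p)\le k!p^{(r-1)a}$.
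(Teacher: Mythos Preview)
Your broad strategy is right, but the accounting step you flag at the end is not actually carried out, and the vague ``collapse along $r-1$ of the $r(r-1)/2$ directions'' does not work as stated. Once you reduce every congruence to the common modulus $p^{\rho b}$ at the outset, you have genuinely discarded information: after the substitution and the Lemma~\ref{lemma3.2} manoeuvre you are left with moduli $p^{\rho b-(j-\rho+1)a}$ for $\rho\le j\le k$, and normalising these up to $p^{\rho b-a}$ costs exactly $\prod_{j=\rho}^{k}p^{(j-\rho)a}=p^{\frac{1}{2}r(r-1)a}$. There is no mechanism in your setup by which this overcount collapses to $p^{(r-1)a}$; the extra factor $p^{\frac{1}{2}(r-1)(r-2)a}$ is real.

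The paper avoids this by handling the moduli in the opposite direction. Rather than lowering the moduli of the congruences with $j\ge \rho+1$, it \emph{raises} the single congruence with $j=\rho$ from modulus $p^{\rho b}$ to $p^{\rho b+(r-1)a}$, paying the factor $p^{(r-1)a}$ once, up front. The hypothesis $b\ge (r-1)a$ is then used not where you place it, but to guarantee that for $j\ge \rho+1$ one has $jb\ge (\rho+1)b\ge \rho b+(r-1)a$, so that those congruences already hold modulo $p^{\rho b+(r-1)a}$ at no cost. After the substitution and Lemma~\ref{lemma3.2}, each congruence is divisible by at most $(p^a)^{r}$, and so all of them remain valid modulo $p^{\rho b+(r-1)a-ra}=p^{\rho b-a}$. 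At this uniform modulus the Jacobian argument and Lemma~\ref{lemma3.1} give at most $k!$ solutions, with no further enumeration of right-hand sides required. That is how the bound $k!\,p^{(r-1)a}$ arises.
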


\begin{proof} Consider fixed natural numbers $a$ and $b$ with $b\ge (r-1)a$, a fixed $r$-tuple $\bfsig\in \Sig_r$, and fixed integers $\xi$ and $\eta$ with $1\le \xi\le p^a$, $1\le \eta\le p^b$ and $\eta\not\equiv \xi\mmod{p}$. In addition, define the integer $\mu_j$ for $\rho\le j\le k$ by putting
$$\mu_j=\begin{cases} 0,&\text{when $\rho+1\le j\le k$,}\\
r-1,&\text{when $j=\rho$.}\end{cases}$$
We denote by $\calD_1(\bfn)$ the set of $\calR(\rho b)$-equivalence classes of solutions of the system of congruences
\begin{equation}\label{3.20}
\sum_{i=1}^r\sig_i(z_i-\eta)^j\equiv n_j\mmod{p^{jb+\mu_ja}}\quad (\rho\le j\le k),
\end{equation}
with $1\le \bfz\le p^{\rho b}$ and $\bfz\equiv \bfxi \mmod{p^{a+1}}$ for some $\bfxi \in \Xi_a^r(\xi)$. Then it follows from (\ref{3.1}) that
\begin{align}
\text{card}(\calC_{a,b}^{\bfsig ,r,\rho}(\bfm;\xi,\eta))&\le \sum_{\substack{1\le n\le p^{\rho b+(r-1)a}\\ n\equiv m_{\rho}\mmod{p^{\rho b}}}}\text{card}(\calD_1(n,m_{\rho+1},\ldots ,m_k))\notag \\
&\le p^{(r-1)a}\max_{1\le \bfn\le p^{kb}}\text{card}(\calD_1(\bfn)).\label{3.21}
\end{align}

\par Following the pattern of the proof of Lemma \ref{lemma3.3}, we next rewrite each variable $z_i$ in the shape $z_i=p^ay_i+\xi$. The hypothesis that $\bfz\equiv \bfxi\mmod{p^{a+1}}$ for some $\bfxi\in \Xi_a^r(\xi)$ again implies that the $r$-tuple $\bfy$ satisfies (\ref{3.9}). Let $\zet=\xi-\eta$ and write $\zet^{-1}$ for the multiplicative inverse of $\zet$ modulo $p^{kb}$. Then we deduce from (\ref{3.20}) that $\text{card}(\calD_1(\bfn))$ is bounded above by the number of $\calR(\rho b-a)$-equivalence classes of solutions of the system of congruences
\begin{equation}\label{3.22}
\sum_{i=1}^r\sig_i(p^ay_i\zet^{-1}+1)^j\equiv n_j(\zet^{-1})^j\mmod{p^{\rho b+(r-1)a}}\quad (\rho\le j\le k),
\end{equation}
with $1\le \bfy\le p^{\rho b-a}$ satisfying (\ref{3.9}). Here, we have made use of the fact that since $b\ge (r-1)a$, then for $j\ge \rho+1$ the validity of a congruence modulo $p^{jb}$ implies that of the corresponding congruence modulo $p^{\rho b+(r-1)a}$.\par

Let $\bfy=\bfw$ be any solution of the system (\ref{3.22}), if such a solution exists. Then we find that all other solutions $\bfy$ satisfy the system of congruences
\begin{equation}\label{3.23}
\sum_{i=1}^r\sig_i\left( (p^ay_i\zet^{-1}+1)^j-(p^aw_i\zet^{-1}+1)^j\right)\equiv 0\mmod{p^{\rho b+(r-1)a}}\quad (\rho\le j\le k).
\end{equation}
Recall the definition (\ref{3.13}) of the polynomials $\psi_j(z)$. Then by taking linear combinations of these congruences, we find as in the proof of Lemma \ref{lemma3.3} that there exist integers $d_{jm}$ $(j-\rho+2\le m\le j)$, for $\rho\le j\le k$, with the property that any solution of (\ref{3.23}) satisfies the system of congruences
\begin{equation}\label{3.24}
(\zet^{-1}p^a)^{j-\rho+1}\sum_{i=1}^r\sig_i\left(\psi_j(y_i)-\psi_j(w_i)\right)\equiv 0\mmod{p^{\rho b+(r-1)a}}\quad (\rho\le j\le k).
\end{equation}

\par Denote by $\calD_2(\bfu)$ the set of $\calR(\rho b-a)$-equivalence classes of solutions of the system of congruences
\begin{equation}\label{3.25}
\sum_{i=1}^r\sig_i\psi_j(y_i)\equiv u_j\mmod{p^{\rho b-a}}\quad (\rho\le j\le k),
\end{equation}
with $1\le \bfy\le p^{\rho b-a}$ satisfying (\ref{3.9}). Note that when $\rho\le j\le k$, one has
$$j-\rho+1\le k-(k-r+1)+1=r.$$
Then it follows from (\ref{3.24}) that
\begin{equation}\label{3.26}
\text{card}(\calD_1(\bfn))\le \max_{1\le \bfu\le p^{kb}}\text{card}(\calD_2(\bfu)).
\end{equation}
With the Jacobian determinant $J(\bfpsi;\bfx)$ defined as in (\ref{3.17}), we find as in the proof of Lemma \ref{lemma3.3} that the solutions $\bfy$ of (\ref{3.25}) counted by $\calD_2(\bfu)$ satisfy $(J(\bfpsi;\bfy),p)=1$. We therefore deduce from Lemma \ref{lemma3.1} that $\text{card}(\calD_2(\bfu))\le \rho(\rho+1)\ldots k\le k!$. In view of (\ref{3.2}), the conclusion of the lemma now follows from (\ref{3.21}) and (\ref{3.26}).
\end{proof}

Again, a variant of Lemma \ref{lemma3.5} supplies an analogue applicable in the special case $a=0$.

\begin{lemma}\label{lemma3.6}
Suppose that $b$ is an integer with $b\ge 1$. Then $B_{0,b}^{r,\rho}(p)\le k!$.
\end{lemma}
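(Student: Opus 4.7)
The plan is to adapt the proof of Lemma \ref{lemma3.5} to the setting $a=0$, in much the same way that Lemma \ref{lemma3.4} adapts Lemma \ref{lemma3.3}. The key simplification is that when $a=0$ the substitution $z_i=p^ay_i+\xi$ becomes trivial, so no factors of $p^a$ appear and the $p^{(r-1)a}$ term in the bound disappears. In particular, the elimination-via-Lemma \ref{lemma3.2} machinery invoked in Lemma \ref{lemma3.5} in order to disentangle the $p^a$-adic structure of the shifted polynomials $(p^ay_i\zet^{-1}+1)^j$ is not needed at all here.

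Concretely, fix $b\ge 1$, $\bfsig\in \Sig_r$, an integer $\eta$ with $1\le \eta\le p^b$, and a $k$-tuple $\bfm$. Writing $y_i=z_i-\eta$, the constraints $\bfz\equiv\bfxi\pmod{p}$ for some $\bfxi\in \Xi_0^r(0)$ together with $\bfz\not\equiv\eta\pmod{p}$ translate into the conditions $p\nmid y_i$ $(1\le i\le r)$ and $y_i\not\equiv y_j\pmod{p}$ for $1\le i<j\le r$. Since we are counting $\calR(\rho b)$-equivalence classes and each modulus $p^{jb}$ in (\ref{3.1}) with $j\ge \rho$ satisfies $p^{jb}\ge p^{\rho b}$, the quantity $\mathrm{card}(\calC_{0,b}^{\bfsig,r,\rho}(\bfm;0,\eta))$ is bounded above by the number of solutions of the reduced system
$$\sum_{i=1}^r\sig_i y_i^j\equiv n_j\pmod{p^{\rho b}}\quad (\rho\le j\le k),$$
with $1\le \bfy\le p^{\rho b}$ satisfying the above non-congruence conditions, where $n_j$ is the reduction of $m_j$ modulo $p^{\rho b}$. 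The low-modulus congruences $1\le j\le k-r$ are simply discarded.

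Finally, this is a system of $r$ congruences in the $r$ variables $y_1,\ldots,y_r$, with polynomials of degrees $\rho,\rho+1,\ldots,k$. Its Jacobian coincides with the determinant $J(\bfy)$ of (\ref{3.19}), and the argument reproduced in the proof of Lemma \ref{lemma3.4} shows that $J(\bfy)$ factors, up to a nonzero constant modulo $p$, as $(y_1\cdots y_r)^{\rho-1}\prod_{1\le i<j\le r}(y_i-y_j)$, which is coprime to $p$ under our non-congruence constraints. Lemma \ref{lemma3.1} then bounds the number of such solutions by $\rho(\rho+1)\cdots k\le k!$, and in view of (\ref{3.3}) the lemma follows. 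There is no real obstacle: the argument is essentially immediate once one recognises that for $a=0$ the low-modulus congruences can be ignored outright and the Jacobian computation of Lemma \ref{lemma3.4} applies verbatim.
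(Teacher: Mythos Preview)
Your proof is correct and follows essentially the same approach as the paper's own proof. Both arguments discard the congruences with $1\le j\le k-r$, reduce the remaining congruences modulo $p^{\rho b}$, make the substitution $y_i=z_i-\eta$ to obtain the pure power-sum system, and then invoke the Jacobian computation from the proof of Lemma~\ref{lemma3.4} together with Lemma~\ref{lemma3.1} to conclude.
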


\begin{proof} Consider a fixed integer $b$ with $b\ge 1$, a fixed $r$-tuple $\bfsig\in \Sig_r$, and a fixed integer $\eta$ with $1\le \eta\le p^b$. We denote by $\calD_1(\bfn;\eta)$ the set of $\calR (\rho b)$-equivalence classes of solutions of the system of congruences
$$\sum_{i=1}^r\sig_i(z_i-\eta)^j\equiv n_j\mmod{p^{\rho b}}\quad (\rho\le j\le k),$$
with $1\le \bfz\le p^{\rho b}$ and $\bfz\equiv \bfxi\mmod{p}$ for some $\bfxi\in \Xi_0^r(0)$, and for which in addition $\bfz\not\equiv \eta\mmod{p}$. Then it follows from (\ref{3.1}) that
\begin{equation}\label{3.27}
\mathrm{card}(\calC_{0,b}^{\bfsig,r,\rho}(\bfm;0,\eta))\le \max_{1\le \bfn\le p^{\rho b}}\mathrm{card}(\calD_1(\bfn;\eta)).
\end{equation}

\par Recall the definition of the Jacobian determinant $J(\bfy)$ from (\ref{3.19}). Then following the argument concluding the proof of Lemma \ref{lemma3.4}, one discerns that $\calD_1(\bfn;\eta)=\calD_1(\bfn;0)$, and that $\calD_1(\bfn;0)$ counts the solutions of the system of congruences
$$\sum_{i=1}^r\sig_iy_i^j\equiv n_j\mmod{p^{\rho b}}\quad (\rho\le j\le k),$$
with $1\le \bfy\le p^{\rho b}$ satisfying $p\nmid J(\bfy)$. By wielding Lemma \ref{lemma3.1}, we therefore deduce that $\calD_1(\bfn;0)\le \rho(\rho+1)\ldots k\le k!$. In view of (\ref{3.3}), the conclusion of the lemma therefore follows from (\ref{3.27}).
\end{proof}

\section{The conditioning process} As in the analogous treatment of \cite[\S5]{Woo2011a}, the mean value $I_{a,b}^\bfsig(X;\xi,\eta)$ is not, by itself, suitable for use in a repeated efficient congruencing iteration. In this section we show how, without serious loss, one may replace the factor $\grf_b(\bfalp;\eta)^{2s}$ occurring in (\ref{2.12}) by the conditioned factor $\grF_b^\bftau (\bfalp;\eta)^{2u}$ in (\ref{2.13}). Our argument follows very closely the proof of \cite[Lemma 5.1]{Woo2011a}, and so we may be concise by analogy at several points in our discussion.

\begin{lemma}\label{lemma4.1} Let $a$ and $b$ be integers with $b>a\ge 1$. Then one has
$$I_{a,b}(X)\ll K_{a,b}(X)+M^{r-1}I_{a,b+1}(X).$$
\end{lemma}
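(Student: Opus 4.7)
The plan is to emulate the conditioning step of \cite[Lemma 5.1]{Woo2011a}, with suitable adaptations for the general parameter $r$ with $1\le r\le k-1$. Write $\bftau^*=(1,\ldots,1)\in\Sig_r$. I would begin from the identity
$$\grf_b(\bfalp;\eta)=\sum_{\substack{1\le\xi\le p^{b+1}\\ \xi\equiv\eta\mmod{p^b}}}\grf_{b+1}(\bfalp;\xi),$$
which upon raising to the $r$th power decomposes $\grf_b(\bfalp;\eta)^r$ as a sum indexed by $r$-tuples of such lifted residues. Partitioning these tuples according to whether they lie in $\Xi_b^r(\eta)$, one obtains the splitting $\grf_b(\bfalp;\eta)^r=\grF_b^{\bftau^*}(\bfalp;\eta)+G(\bfalp;\eta)$, in which $G$ collects the tuples for which at least one coincidence $\xi_i\equiv\xi_j\mmod{p^{b+1}}$ occurs. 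Since $s=ru$, the relation $|\grf_b(\bfalp;\eta)|^{2s}=|\grf_b(\bfalp;\eta)^r|^{2u}$ combined with the elementary inequality $|A+B|^{2u}\ll|A|^{2u}+|B|^{2u}$ yields
$$I_{a,b}^\bfsig(X;\xi,\eta)\ll\oint|\grF_a^\bfsig(\bfalp;\xi)|^2|\grF_b^{\bftau^*}(\bfalp;\eta)|^{2u}\d\bfalp+\oint|\grF_a^\bfsig(\bfalp;\xi)|^2|G(\bfalp;\eta)|^{2u}\d\bfalp.$$
The first integral is $K_{a,b}^{\bfsig,\bftau^*}(X;\xi,\eta)\le K_{a,b}(X)$, which delivers the first summand in the claimed bound.

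The crux of the argument is to estimate the error integral by $M^{r-1}I_{a,b+1}(X)$. Every $r$-tuple contributing to $G$ has $\xi_i\equiv\xi_j\mmod{p^{b+1}}$ for some pair $i<j$; a union bound over the $\binom{r}{2}$ pairs, followed by summing the remaining $r-2$ entries freely to reconstitute $\grf_b(\bfalp;\eta)^{r-2}$, gives the pointwise estimate
$$|G(\bfalp;\eta)|\ll|\grf_b(\bfalp;\eta)|^{r-2}\Bigl|\sum_\xi\grf_{b+1}(\bfalp;\xi)^2\Bigr|.$$
Raising this to the $2u$-th power, and applying H\"older and power-mean inequalities together with the trivial bound $|\grf_b|\le p\max_\xi|\grf_{b+1}(\bfalp;\xi)|$, would reduce $\oint|\grF_a^\bfsig|^2|G|^{2u}\d\bfalp$ to a weighted combination of $I_{a,b+1}^\bfsig(X;\xi,\xi')$'s as $\xi'$ ranges over the $p$ lifts of $\eta$ modulo $p^{b+1}$. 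Taking the maximum over $\xi'$, and then over $\bfsig,\xi,\eta$, would promote the pointwise estimate on $I_{a,b}^\bfsig(X;\xi,\eta)$ to the claimed inequality for $I_{a,b}(X)$.

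The main obstacle is the precise choice of H\"older exponents in the error-term estimation so that the combinatorial factor ends up being exactly $M^{r-1}$, rather than the larger power of $M$ that a na\"\i ve pointwise substitution would produce. The saving comes from the interplay between the $\binom{r}{2}p^{r-1}$ count of ill-conditioned $r$-tuples (as against the $\sim p^r$ total) and the availability of the $L^{2s}$-type mean value $I_{a,b+1}$ on the right-hand side of the lemma, paralleling the delicate sequence of inequalities arranged in \cite[\S5]{Woo2011a}.
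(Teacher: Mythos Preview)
Your decomposition of $\grf_b(\bfalp;\eta)^r$ into $\grF_b^{\bftau^*}(\bfalp;\eta)+G(\bfalp;\eta)$, followed by $|A+B|^{2u}\ll|A|^{2u}+|B|^{2u}$, is a natural idea but does not recover the factor $M^{r-1}$ in the error term. Take $r=2$ to see the difficulty concretely: then $G=\sum_\xi\grf_{b+1}(\bfalp;\xi)^2$ is a sum of $p$ terms, and H\"older gives $|G|^{2u}\le p^{2u-1}\sum_\xi|\grf_{b+1}(\bfalp;\xi)|^{4u}$, so that
$$\oint|\grF_a^\bfsig(\bfalp;\xi)|^2|G(\bfalp;\eta)|^{2u}\d\bfalp\ll p^{2u}\,I_{a,b+1}(X)=M^s\,I_{a,b+1}(X),$$
which is far larger than the target $M\,I_{a,b+1}(X)$. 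The heuristic in your final paragraph---that $G$ involves only $O(p^{r-1})$ ill-conditioned tuples against $p^r$ total---accounts for a saving of one power of $p$ \emph{within a single block} $\grf_b^r$, but this saving does not survive raising the block to the $2u$-th power; no choice of H\"older exponents rescues this. (Incidentally, your pointwise bound should read $|G|\ll(\sum_\xi|\grf_{b+1}(\bfalp;\xi)|)^{r-2}\sum_\xi|\grf_{b+1}(\bfalp;\xi)|^2$ rather than $|\grf_b|^{r-2}(\cdots)$, though this is not the main issue.)

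The paper, following \cite[\S5]{Woo2011a}, does not decompose $\grf_b^r$. Instead it interprets $I_{a,b}^\bfsig(X;\xi,\eta)$ as a count of solutions of the underlying Diophantine system and splits according to a property of the \emph{full} collection of $2s$ variables $v_1,\ldots,v_s,w_1,\ldots,w_s$ congruent to $\eta\pmod{p^b}$: let $T_1$ count solutions in which these $2s$ integers together occupy at most $r-1$ residue classes modulo $p^{b+1}$, and $T_2$ the remainder. For $T_1$ one sums over the $O(p^{r-1})$ choices of classes and applies $|z_1\cdots z_n|\le|z_1|^n+\ldots+|z_n|^n$ to collapse all $2s$ variables into a single class, giving $T_1\ll M^{r-1}I_{a,b+1}(X)$ directly. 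For $T_2$, since at least $r$ distinct classes occur, one selects $r$ of the $v_l,w_l$ in distinct classes to form a single factor $\grF_b^\bftau(\bfalp;\eta)$, and H\"older with exponents $2u$ and $2u/(2u-1)$ yields $T_2\ll K_{a,b}(X)^{1/(2u)}I_{a,b}(X)^{1-1/(2u)}$. Combining,
$$I_{a,b}(X)\ll M^{r-1}I_{a,b+1}(X)+K_{a,b}(X)^{1/(2u)}I_{a,b}(X)^{1-1/(2u)},$$
from which the lemma follows. The essential point you are missing is that the dichotomy is imposed on all $2s$ variables simultaneously, so that the combinatorial factor $p^{r-1}$ appears once, not once per block.
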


\begin{proof} Consider fixed integers $\xi$ and $\eta$ with $1\le \xi\le p^a$ and $1\le \eta\le p^b$ with $\eta\not\equiv \xi\mmod{p}$, and an $r$-tuple $\bfsig\in \Sig_r$. Then on considering the underlying Diophantine system, it follows from (\ref{2.12}) that $I_{a,b}^\bfsig (X;\xi,\eta)$ counts the number of integral solutions of the system
\begin{equation}\label{4.1}
\sum_{i=1}^r\sig_i(x_i^j-y_i^j)=\sum_{l=1}^s(v_l^j-w_l^j)\quad (1\le j\le k),
\end{equation}
with
$$1\le \bfx,\bfy,\bfv,\bfw\le X,\quad \bfv\equiv\bfw\equiv \eta\mmod{p^b},$$
and satisfying the property that there exist $\bfxi,\bfzet\in \Xi_a^r(\xi)$ for which
$$\bfx\equiv \bfxi\mmod{p^{a+1}}\quad \text{and}\quad \bfy\equiv \bfzet\mmod{p^{a+1}}.$$
Let $T_1$ denote the number of integral solutions $\bfx$, $\bfy$, $\bfv$, $\bfw$ of the system (\ref{4.1}), counted by $I_{a,b}^\bfsig(X;\xi,\eta)$, in which the $2s$ integers $v_1,\ldots,v_s$ and $w_1,\ldots ,w_s$ together lie in at most $r-1$ distinct residue classes modulo $p^{b+1}$, and let $T_2$ denote the corresponding number of solutions in which these integers together occupy at least $r$ distinct residue classes modulo $p^{b+1}$. Then
$$I_{a,b}^\bfsig (X;\xi,\eta)\le T_1+T_2.$$

\par The argument of the proof of \cite[Lemma 5.1]{Woo2011a} leading to equation (5.2) of that paper shows, mutatis mutandis, that
\begin{align*}
T_1&\ll \sum_{\substack{1\le \eta_1,\ldots ,\eta_{r-1}\le p^{b+1}\\ \bfeta \equiv \eta\mmod{p^b}}}\sum_{i=1}^r\oint |\grF_a^\bfsig (\bfalp;\xi)^2\grf_{b+1}(\bfalp;\eta_i)^{2s}|\d\bfalp \\
&\ll p^{r-1}\max_{\substack{1\le \eta_0\le p^{b+1}\\ \eta_0\not\equiv\xi\mmod{p}}}I_{a,b+1}^\bfsig (X;\xi,\eta_0).
\end{align*}
On the other hand, the argument of the proof of \cite[Lemma 5.1]{Woo2011a} leading to equation (5.3) of that paper shows, mutatis mutandis, that for some $\bftau \in \Sig_r$ one has
\begin{align*}
T_2&\ll \Bigl( \oint |\grF_a^\bfsig (\bfalp;\xi)^2\grF_b^\bftau (\bfalp;\eta)^{2u}|\d\bfalp \Bigr)^{1/(2u)}\Bigl( \oint |\grF_a^\bfsig (\bfalp;\xi)^2\grf_b(\bfalp;\eta)^{2s}|\d\bfalp \Bigr)^{1-1/(2u)}\\
&\ll \left( K_{a,b}^{\bfsig,\bftau}(X;\xi,\eta)\right)^{1/(2u)}\left( I_{a,b}^\bfsig (X;\xi,\eta)\right)^{1-1/(2u)}.
\end{align*}
Thus we deduce from (\ref{2.14}) and (\ref{2.15}) that
$$I_{a,b}(X)\ll M^{r-1}I_{a,b+1}(X)+(K_{a,b}(X))^{1/(2u)}(I_{a,b}(X))^{1-1/(2u)}.$$
The conclusion of the lemma follows immediately.
\end{proof}

We next obtain an estimate that enables us to truncate the conditioning process. Here we recall that the exponent $\kap_{s+r}$ is a positive number, with
$$\kap_{s+r}\le \max\{s+r,\tfrac{1}{2}k(k+1)\},$$
which measures the strength of the permissible exponent $\lam_s^*$ by means of the relation (\ref{2.2}). We have in mind the choices for $\kap_{s+r}$ presented in equations (\ref{2.5}) and (\ref{2.6}). Finally, it is convenient to write $\kap$ for $\kap_{s+r}$, since confusion is easily avoided.

\begin{lemma}\label{lemma4.2} Let $a$, $b$ and $H$ be positive integers with
$$0<2(b-a)\le H\le \tet^{-1}-b.$$
Then provided that $s\ge 3r$, one has
$$M^{H(r-1)}I_{a,b+H}(X)\ll M^{-(r+2)H/2}X^\del (X/M^b)^{2s}(X/M^a)^{2r-\kap+\eta_{s+r}}.$$
\end{lemma}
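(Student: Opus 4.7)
The plan is to apply H\"older's inequality to separate $|\grF_a^\bfsig|^2$ from $|\grf_{b+H}|^{2s}$ in the integrand defining $I_{a,b+H}^\bfsig(X;\xi,\eta)$, bounding each resulting integral in terms of $J_{s+r}$ via the permissible exponent $\lam_{s+r}^*$. Writing $s=ur$ in accordance with the setup preceding the lemma, so that $2(s+r)/r=2(u+1)$ is an even integer, H\"older with conjugate exponents $(s+r)/r$ and $(s+r)/s$ yields
$$I_{a,b+H}^\bfsig(X;\xi,\eta)\le \Bigl(\oint|\grF_a^\bfsig(\bfalp;\xi)|^{2(u+1)}\d\bfalp\Bigr)^{r/(s+r)}\Bigl(\oint|\grf_{b+H}(\bfalp;\eta)|^{2(s+r)}\d\bfalp\Bigr)^{s/(s+r)}.$$
By Lemma \ref{lemma2.1} and (\ref{2.1}), the second factor is $\ll (X/M^{b+H})^{2(s+r)-\kap+\eta_{s+r}+\eps}$. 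For the first factor, expanding $|\grF_a^\bfsig|^{2(u+1)}$ as a Dirichlet sum and integrating counts solutions of Vinogradov's system in $2(s+r)$ variables, each lying in the residue class $\xi\pmod{p^a}$; the translation-dilation substitution $x=\xi+p^a\tilde{x}$ reduces this to the standard Vinogradov system in the dilated variables, giving a bound $\ll J_{s+r}(X/M^a)\ll (X/M^a)^{2(s+r)-\kap+\eta_{s+r}+\eps}$.

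Combining the two bounds, the $\eta_{s+r}$ contributions split as $\eta_{s+r}\cdot r/(s+r)$ and $\eta_{s+r}\cdot s/(s+r)$ between the factors, and the monotonicity $X/M^{b+H}\le X/M^a$ consolidates them into a single factor $(X/M^a)^{\eta_{s+r}}$. Using the identities $(X/M^a)^{\kap s/(s+r)}(X/M^{b+H})^{-\kap s/(s+r)}=M^{(b+H-a)\kap s/(s+r)}$ and $(X/M^{b+H})^{2s}=M^{-2sH}(X/M^b)^{2s}$, the resulting estimate becomes
$$I_{a,b+H}^\bfsig(X;\xi,\eta)\ll X^\eps (X/M^a)^{2r-\kap+\eta_{s+r}}(X/M^b)^{2s}M^{(b+H-a)\kap s/(s+r)-2sH}.$$
Multiplying by $M^{H(r-1)}$ and invoking $\kap\le s+r$ (so $\kap s/(s+r)\le s$) together with the hypothesis $b-a\le H/2$ (so $b+H-a\le 3H/2$), the total $M$-exponent is at most $H(r-1)+(3H/2)s-2sH=H(r-1-s/2)$. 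The standing assumption $s\ge 3r$ reduces this further to $-(r+2)H/2$, and taking the maximum over admissible $\xi,\eta,\bfsig$ and choosing $\eps\le \del$ delivers the stated bound.

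The main technical obstacle is the sharp estimate for the $\grF_a^\bfsig$-integral above. A naive bound via Cauchy-Schwarz or AM-GM applied to the $|\Xi_a^r|\le p^r$ summands in the definition of $\grF_a^\bfsig$ would introduce extraneous factors of $M^r$ or larger, vitiating the $M$-power savings in the final exponent balance. The clean bound $\oint|\grF_a^\bfsig|^{2(u+1)}\d\bfalp\ll J_{s+r}(X/M^a)$ depends essentially on exploiting the Diophantine interpretation of $|\grF_a^\bfsig|^{2(u+1)}$ together with the translation-dilation invariance of Vinogradov's system, which collapses the summation over $\bfxi\in\Xi_a^r(\xi)$ cleanly into a count in the dilated variables.
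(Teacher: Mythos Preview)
Your proof is correct and follows essentially the same route as the paper. The only difference is the order of two steps: the paper first passes from $|\grF_a^\bfsig(\bfalp;\xi)|^2$ to $|\grf_a(\bfalp;\xi)|^{2r}$ inside the integrand (by considering the underlying Diophantine system and dropping the distinctness constraints), obtaining $I_{a,b+H}^\bfsig(X;\xi,\eta)\le \oint|\grf_a(\bfalp;\xi)^{2r}\grf_{b+H}(\bfalp;\eta)^{2s}|\d\bfalp$, and only then applies H\"older; you apply H\"older first and then invoke the Diophantine interpretation on the resulting $\oint|\grF_a^\bfsig|^{2(u+1)}\d\bfalp$. Both orderings lead to the identical estimate $I_{a,b+H}(X)\ll (J_{s+r}(X/M^a))^{r/(s+r)}(J_{s+r}(X/M^{b+H}))^{s/(s+r)}$, and your exponent bookkeeping thereafter matches the paper's computation with $\Ups=(M^{b+H-a})^{\kap s/(s+r)}M^{-2sH}$. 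The paper's ordering has the mild advantage that it does not require $s/r$ to be an integer for the H\"older step to land on an even moment, but since the standing setup already fixes $s=ur$ this is immaterial here.
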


\begin{proof} On considering the underlying Diophantine equations, we find from (\ref{2.12}) that when $1\le \xi\le p^a$, $1\le \eta\le p^{b+H}$ and $\bfsig\in \Sig_r$, one has
$$I_{a,b+H}^\bfsig(X;\xi,\eta)\le \oint|\grf_a(\bfalp;\xi)^{2r}\grf_{b+H}(\bfalp;\eta)^{2s}|\d\bfalp .$$
Applying H\"older's inequality together with Lemma \ref{lemma2.1}, therefore, we obtain
\begin{align*}
I_{a,b+H}^\bfsig (X;\xi,\eta)&\le \Bigl( \oint|\grf_a(\bfalp;\xi)|^{2s+2r}\d\bfalp\Bigr)^{r/(s+r)}\Bigl( \oint |\grf_{b+H}(\bfalp;\eta)|^{2s+2r}\d\bfalp \Bigr)^{s/(s+r)}\\
&\ll (J_{s+r}(X/M^a))^{r/(s+r)}(J_{s+r}(X/M^{b+H}))^{s/(s+r)}.
\end{align*}
We thus deduce from (\ref{2.2}) and (\ref{2.8}) that
\begin{align*}
I_{a,b+H}(X)&\ll \left( (X/M^a)^{r/(s+r)}(X/M^{b+H})^{s/(s+r)}\right)^{2s+2r-\kap+\eta_{s+r}+\del}\\
&\ll X^\del (X/M^a)^{2r-\kap+\eta_{s+r}}(X/M^b)^{2s}\Ups ,
\end{align*}
where
$$\Ups=(M^{b-a+H})^{\kap s/(s+r)}M^{-2sH}.$$

\par We may suppose that $s+r\ge \kap$, $H\ge 2(b-a)$ and $s\ge 3r$, and hence
\begin{align*}
rH+(b-a+H)\kap s/(s+r)-2sH&\le rH+\tfrac{3}{2}\kap sH/(s+r)-2sH\\
&\le \tfrac{1}{2}(2r-s)H\le -\tfrac{1}{2}rH.
\end{align*}
Consequently, one has
$$M^{H(r-1)}\Ups\le M^{-(r+2)H/2},$$
whence
$$M^{H(r-1)}I_{a,b+H}(X)\ll M^{-(r+2)H/2}X^\del (X/M^a)^{2r-\kap +\eta_{s+r}}(X/M^b)^{2s},$$
and the conclusion of the lemma follows.
\end{proof}

The repeated application of Lemma \ref{lemma4.1} in combination with Lemma \ref{lemma4.2} yields the conditioning lemma underpinning the efficient congruencing process.

\begin{lemma}\label{lemma4.3} Let $a$ and $b$ be integers with $1\le a<b$, and put $H=2(b-a)$. Suppose that $b+H\le \tet^{-1}$ and $s\ge 3r$. Then there exists an integer $h$ with $0\le h<H$ having the property that
$$I_{a,b}(X)\ll M^{h(r-1)}K_{a,b+h}(X)+M^{-(r+2)H/2}X^\del (X/M^b)^{2s}(X/M^a)^{2r-\kap+\eta_{s+r}}.$$
\end{lemma}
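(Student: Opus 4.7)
The plan is to prove Lemma~\ref{lemma4.3} by iterating Lemma~\ref{lemma4.1} a total of $H$ times and then invoking Lemma~\ref{lemma4.2} to handle the resulting tail term. The underlying observation is that Lemma~\ref{lemma4.1} trades one unit of $b$-depth at the cost of a factor $M^{r-1}$, so unfolding it $H$ times expresses $I_{a,b}(X)$ as a telescoping sum of $K$-type contributions plus a single $I_{a,b+H}(X)$ remainder.

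First I would establish, by a straightforward induction on $h \in \{0,1,\ldots,H\}$, the bound
$$I_{a,b}(X)\ll \sum_{h=0}^{H-1}M^{h(r-1)}K_{a,b+h}(X)+M^{H(r-1)}I_{a,b+H}(X).$$
The base case $h=0$ is tautological, and the inductive step follows from applying Lemma~\ref{lemma4.1} with $b$ replaced by $b+h$ to the trailing term $M^{h(r-1)}I_{a,b+h}(X)$, which produces the $h$-th summand together with $M^{(h+1)(r-1)}I_{a,b+h+1}(X)$. At each step one must check that the hypothesis $b+h>a$ of Lemma~\ref{lemma4.1} is satisfied, which is immediate since $b>a$ already.

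Next I would apply Lemma~\ref{lemma4.2} to the final term. The hypotheses are met: we have $H=2(b-a)$ by definition, so $2(b-a)\le H$; the assumption $b+H\le \tet^{-1}$ is exactly the upper bound $H\le \tet^{-1}-b$; and $s\ge 3r$ is assumed. Lemma~\ref{lemma4.2} therefore yields
$$M^{H(r-1)}I_{a,b+H}(X)\ll M^{-(r+2)H/2}X^\del (X/M^b)^{2s}(X/M^a)^{2r-\kap+\eta_{s+r}},$$
which is precisely the tail term in the statement of Lemma~\ref{lemma4.3}.

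Finally, I would choose $h\in\{0,1,\ldots,H-1\}$ to maximise $M^{h(r-1)}K_{a,b+h}(X)$, bounding the sum of $H$ such terms by $H$ times this maximum. Since $H\le \tet^{-1}$ depends only on $s$, $k$ and $r$, the factor $H$ is absorbed into the implicit constant of the Vinogradov notation $\ll$, completing the proof. There is no real obstacle here: both the iteration and the choice of $h$ are routine, and the slightly delicate points—namely checking that the parameters satisfy the hypotheses of Lemma~\ref{lemma4.2} and confirming that $H$ may be absorbed into implicit constants—have been addressed in the set-up of \S2 where $\tet$ is fixed in terms of $s$, $k$, $r$, $N$.
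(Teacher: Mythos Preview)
Your proposal is correct and follows essentially the same route as the paper: iterate Lemma~\ref{lemma4.1} to obtain the sum $\sum_{h=0}^{H-1}M^{h(r-1)}K_{a,b+h}(X)+M^{H(r-1)}I_{a,b+H}(X)$, then apply Lemma~\ref{lemma4.2} to the tail. Your additional remarks on selecting the maximal summand and absorbing the factor $H$ (bounded by $\tet^{-1}$, hence by a constant depending only on $s$, $k$, $r$) simply make explicit what the paper leaves implicit.
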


\begin{proof} Repeated application of Lemma \ref{lemma4.1} shows that whenever $a$ and $b$ are positive integers with $b>a\ge 1$, and $H=2(b-a)$, then
\begin{equation}\label{4.2}
I_{a,b}(X)\ll \sum_{h=0}^{H-1}M^{h(r-1)}K_{a,b+h}(X)+M^{H(r-1)}I_{a,b+H}(X).
\end{equation}
The desired conclusion therefore follows on applying Lemma \ref{lemma4.2} to estimate the second term on the right hand side of (\ref{4.2}).
\end{proof}

\section{The efficient congruencing step, I} Our goal in this section is to convert latent congruence information within the mean value $K_{a,b}(X)$ into a form useful in subsequent iterations, and this we achieve using the work of \S3. The two basic approaches of \S3 yield two different manifestations of the efficient congruencing step, and these we examine in separate lemmata.

\begin{lemma}\label{lemma5.1} Suppose that $a$ and $b$ are integers with $1\le a<b\le \tet^{-1}$. Then one has
$$K_{a,b}(X)\ll M^{\frac{1}{2}r(r-1)(b+a)}(M^{kb-a})^r(J_{s+r}(X/M^b))^{1-r/s}(I_{b,kb}(X))^{r/s}.$$
\end{lemma}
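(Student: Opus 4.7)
The asserted inequality is uniform in $\bfsig,\bftau,\xi,\eta$, so it suffices to bound $T=K_{a,b}^{\bfsig,\bftau}(X;\xi,\eta)$ for fixed choices of these parameters. By orthogonality, $T$ counts the integral solutions of
$$\sum_{i=1}^r\sig_i(x_i^j-y_i^j)=\sum_{\ell=1}^u\sum_{i=1}^r\tau_i(v_{\ell,i}^j-w_{\ell,i}^j)\quad (1\le j\le k),$$
in which $\bfx,\bfy$ lie in the support of $\grF_a^\bfsig(\bfalp;\xi)$ and each $\bfv_\ell,\bfw_\ell$ in that of $\grF_b^\bftau(\bfalp;\eta)$. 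Since every $v_{\ell,i}$ and $w_{\ell,i}$ is congruent to $\eta$ modulo $p^b$, binomial expansion of each $j$th power around $\eta$ exhibits the right hand side as divisible by $p^{jb}$. Hence any such solution satisfies the translation-dilation congruence
$$\sum_{i=1}^r\sig_i\bigl((x_i-\eta)^j-(y_i-\eta)^j\bigr)\equiv 0\pmod{p^{jb}}\quad (1\le j\le k).$$

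\textbf{Partitioning and Cauchy--Schwarz.} Splitting each $\grf_{a+1}(\sig_i\bfalp;\xi_i)$ according to residue class modulo $p^{kb}$ refines the identity
$$\grF_a^\bfsig(\bfalp;\xi)=\sum_\bfm H_\bfm(\bfalp),\qquad H_\bfm(\bfalp)=\sum_{\bfx_0\in\calX_0(\bfm)}\prod_{i=1}^r\grf_{kb}(\sig_i\bfalp;x_{0,i}),$$
where $\calX_0(\bfm)$ is the set of admissible $\bfx_0\pmod{p^{kb}}$ (that is, with $\bfx_0\equiv\bfxi\pmod{p^{a+1}}$ for some $\bfxi\in\Xi_a^r(\xi)$) satisfying $\sum_i\sig_i(x_{0,i}-\eta)^j\equiv m_j\pmod{p^{jb}}$ for $1\le j\le k$. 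The congruence above forces every off-diagonal piece $\oint H_\bfm\overline{H_{\bfm'}}|\grF_b^\bftau|^{2u}\d\bfalp$ with $\bfm\ne\bfm'$ to vanish, leaving $T=\sum_\bfm\oint|H_\bfm|^2|\grF_b^\bftau|^{2u}\d\bfalp$. A single application of Cauchy--Schwarz furnishes $|H_\bfm|^2\le|\calX_0(\bfm)|\sum_{\bfx_0\in\calX_0(\bfm)}\prod_i|\grf_{kb}(\bfalp;x_{0,i})|^2$, and Lemma \ref{lemma3.3} bounds $|\calX_0(\bfm)|\le B_{a,b}^{r,k}(p)\ll M^{r(r-1)(a+b)/2}$. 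Summing over $\bfm$ and recombining the disjoint union $\calX_0=\bigcup_\bfm\calX_0(\bfm)$ delivers
$$T\ll M^{r(r-1)(a+b)/2}\sum_{\bfx_0\in\calX_0}\oint\prod_{i=1}^r|\grf_{kb}(\bfalp;x_{0,i})|^2\cdot|\grF_b^\bftau(\bfalp;\eta)|^{2u}\d\bfalp.$$

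\textbf{H\"older and assembly.} For each fixed $\bfx_0$, apply H\"older's inequality with weights $1-r/s$ and $r/s$, via the factorisation
$$\prod_{i=1}^r|\grf_{kb}|^2\cdot|\grF_b^\bftau|^{2u}=\bigl(|\grF_b^\bftau|^{2(s+r)/r}\bigr)^{1-r/s}\bigl(\prod_{i=1}^r|\grf_{kb}|^{2s/r}\cdot|\grF_b^\bftau|^2\bigr)^{r/s}.$$
For the first factor, positivity of the underlying Diophantine count permits the removal of the distinctness conditions in the support of $\grF_b^\bftau$, and Lemma \ref{lemma2.1} then gives $\oint|\grF_b^\bftau|^{2(s+r)/r}\d\bfalp\ll J_{s+r}(X/M^b)$. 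For the second factor, AM--GM supplies $\prod_i|\grf_{kb}(\bfalp;x_{0,i})|^{2s/r}\le r^{-1}\sum_i|\grf_{kb}(\bfalp;x_{0,i})|^{2s}$; since the hypothesis $\xi\not\equiv\eta\pmod p$ forces $x_{0,i}\not\equiv\eta\pmod p$, each of the resulting integrals is bounded by $I_{b,kb}(X)$. Finally, summing over $\bfx_0\in\calX_0$, whose cardinality satisfies $|\calX_0|\le|\Xi_a^r(\xi)|\cdot p^{r(kb-a-1)}\ll M^{r(kb-a)}=(M^{kb-a})^r$, assembles the asserted bound. The principal delicacies are engineering the H\"older split so that the exponents on $J_{s+r}$ and $I_{b,kb}$ emerge exactly as $1-r/s$ and $r/s$, and verifying that the relaxation of distinctness in the $\grF_b^\bftau$ factor genuinely enlarges the count; the rest is bookkeeping.
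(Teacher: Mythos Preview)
Your proof is correct and follows essentially the same approach as the paper: interpret $K_{a,b}^{\bfsig,\bftau}$ as a Diophantine count, extract the congruence modulo $p^{jb}$ via the binomial theorem, partition according to the residue class $\bfm$ and apply Cauchy--Schwarz together with Lemma~\ref{lemma3.3}, then finish with H\"older and Lemma~\ref{lemma2.1}. The only cosmetic difference is the order of the final two steps: the paper first collapses the sum over residue classes via AM--GM to reach a single $\zeta$ (picking up the factor $(M^{kb-a})^r$) before applying the $1{-}r/s,\,r/s$ H\"older split, whereas you apply H\"older first and then AM--GM inside the second factor, summing trivially over $\bfx_0$ at the end; the two orderings produce the identical bound.
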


\begin{proof} Consider fixed integers $\xi$ and $\eta$ with $1\le \xi\le p^a$, $1\le \eta\le p^b$ and $\eta\not\equiv \xi\mmod{p}$, and $r$-tuples $\bfsig, \bftau\in \Sig_r$. Then by orthogonality, the mean value $K_{a,b}^{\bfsig,\bftau}(X;\xi,\eta)$ defined in (\ref{2.13}) counts the number of integral solutions of the system
\begin{equation}\label{5.1}
\sum_{i=1}^r\sig_i(x_i^j-y_i^j)=\sum_{l=1}^u\sum_{m=1}^r\tau_m(v_{lm}^j-w_{lm}^j)\quad (1\le j\le k),
\end{equation}
in which, for some $\bfxi,\bfzet\in \Xi^r_a(\xi)$, one has
$$1\le \bfx,\bfy\le X,\quad \bfx\equiv \bfxi\mmod{p^{a+1}}\quad \text{and}\quad \bfy\equiv \bfzet\mmod{p^{a+1}},$$
and for $1\le l\le u$, for some $\bfeta_l,\bfnu_l\in \Xi^r_b(\eta)$, one has
$$1\le \bfv_l,\bfw_l\le X,\quad \bfv_l\equiv \bfeta_l\mmod{p^{b+1}}\quad \text{and}\quad \bfw_l\equiv \bfnu_l\mmod{p^{b+1}}.$$
As in the argument of the proof of \cite[Lemma 6.1]{Woo2011a}, an application of the Binomial Theorem shows that these solutions satisfy the system of congruences
\begin{equation}\label{5.2}
\sum_{i=1}^r\sig_i(x_i-\eta)^j\equiv \sum_{i=1}^r\sig_i(y_i-\eta)^j\mmod{p^{jb}}\quad (1\le j\le k).
\end{equation}

\par We now make use of the work of \S3, writing
$$\grG_{a,b}^\bfsig (\bfalp;\xi,\eta;\bfm)=\sum_{\bfzet \in \calC_{a,b}^{\bfsig,r,k}(\bfm;\xi,\eta)}\prod_{i=1}^r\grf_{kb}(\sig_i\bfalp;\zet_i).$$
Then on considering the underlying Diophantine system, we see from (\ref{5.1}) and (\ref{5.2}) that
\begin{equation}\label{5.3}
K_{a,b}^{\bfsig,\bftau}(X;\xi,\eta)=\sum_{m_1=1}^{p^b}\ldots \sum_{m_k=1}^{p^{kb}}\oint |\grG_{a,b}^\bfsig (\bfalp;\xi,\eta;\bfm)^2\grF_b^\bftau(\bfalp;\eta)^{2u}|\d\bfalp .
\end{equation}
An application of Cauchy's inequality leads via Lemma \ref{lemma3.3} to the bound
\begin{align*}
|\grG_{a,b}^\bfsig (\bfalp;\xi,\eta;\bfm)|^2&\le \text{card}(\calC_{a,b}^{\bfsig,r,k}(\bfm;\xi,\eta))\sum_{\bfzet\in \calC_{a,b}^{\bfsig,r,k}(\bfm;\xi,\eta)}\prod_{i=1}^r|\grf_{kb}(\bfalp;\zet_i)|^2\\
&\ll M^{\frac{1}{2}r(r-1)(a+b)}\sum_{\bfzet\in \calC_{a,b}^{\bfsig,r,k}(\bfm;\xi,\eta)}\prod_{i=1}^r|\grf_{kb}(\bfalp;\zet_i)|^2,
\end{align*}
whence
$$K_{a,b}^{\bfsig,\bftau}(X;\xi,\eta)\ll M^{\frac{1}{2}r(r-1)(a+b)}\sum_{\substack{1\le \bfzet\le p^{kb}\\ \bfzet\equiv \xi\mmod{p^a}}}\oint \Bigl( \prod_{i=1}^r|\grf_{kb}(\bfalp;\zet_i)|^2\Bigr) |\grF_b^\bftau (\bfalp;\eta)|^{2u}\d\bfalp .$$
As in the argument of the proof of \cite[Lemma 6.1]{Woo2011a} leading to equation (6.7) of the latter paper, from here an application of H\"older's inequality yields the upper bound
\begin{align}
K_{a,b}^{\bfsig,\bftau}&(X;\xi,\eta)\notag \\
&\ll M^{\frac{1}{2}r(r-1)(a+b)}(M^{kb-a})^r\max_{\substack{1\le \zet \le p^{kb}\\ \zet\equiv \xi\mmod{p^a}}}\oint |\grf_{kb}(\bfalp;\zet)^{2r}\grF_b^\bftau(\bfalp;\eta)^{2u}|\d\bfalp .\label{5.4}
\end{align}

\par Next we apply H\"older's inequality to the integral on the right hand side of (\ref{5.4}) to obtain
$$\oint |\grf_{kb}(\bfalp;\zet)^{2r}\grF_b^\bftau (\bfalp;\eta)^{2u}|\d\bfalp \le U_1^{1-r/s}U_2^{r/s},$$
where
$$U_1=\oint |\grF_b^\bftau(\bfalp;\eta)|^{2u+2}\d\bfalp \le \oint |\grf_b(\bfalp;\eta)|^{2s+2r}\d\bfalp$$
and
$$U_2=I_{b,kb}^\bftau(X;\eta,\zet).$$
Notice here that since $\eta\not\equiv \xi\mmod{p}$ and $\zet\equiv \xi\mmod{p^a}$ with $a\ge 1$, we have $\zet\not\equiv \eta\mmod{p}$. In this way we deduce from Lemma \ref{lemma2.1} that
$$\oint|\grf_{kb}(\bfalp;\zet)^{2r}\grF_b^\bftau(\bfalp;\eta)^{2u}|\d\bfalp \ll (J_{s+r}(X/M^b))^{1-r/s}(I_{b,kb}(X))^{r/s},$$
and the conclusion of the lemma follows from (\ref{5.4}).
\end{proof}

A variant of the argument employed to establish Lemma \ref{lemma5.1} makes use of Lemma \ref{lemma3.5} in place of Lemma \ref{lemma3.3}.

\begin{lemma}\label{lemma5.2}
Suppose that $a$ and $b$ are integers with $1\le a<b\le \tet^{-1}$ and $b\ge (r-1)a$. Then one has
$$K_{a,b}(X)\ll M^{(r-1)a}(M^{\rho b-a})^r(J_{s+r}(X/M^b))^{1-r/s}(I_{b,\rho b}(X))^{r/s}.$$
\end{lemma}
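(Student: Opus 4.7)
The plan is to follow the template of the proof of Lemma \ref{lemma5.1} essentially verbatim, replacing the appeal to Lemma \ref{lemma3.3} by one to Lemma \ref{lemma3.5}, and lifting only to the intermediate modulus $p^{\rho b}$ in place of $p^{kb}$. I would fix $\xi, \eta$ with $1 \le \xi \le p^a$, $1 \le \eta \le p^b$ and $\eta \not\equiv \xi \mmod{p}$, together with $\bfsig, \bftau \in \Sig_r$. Just as in the opening paragraph of the proof of Lemma \ref{lemma5.1}, orthogonality identifies $K_{a,b}^{\bfsig,\bftau}(X;\xi,\eta)$ with the count of integral solutions of (\ref{5.1}), and the Binomial Theorem yields the congruences (\ref{5.2}) modulo $p^{jb}$ for $1 \le j \le k$.

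The point of departure is to redefine the auxiliary generating function by
$$\grG_{a,b}^\bfsig(\bfalp;\xi,\eta;\bfm) = \sum_{\bfzet \in \calC_{a,b}^{\bfsig,r,\rho}(\bfm;\xi,\eta)} \prod_{i=1}^r \grf_{\rho b}(\sig_i\bfalp;\zet_i),$$
so that residue classes are identified modulo $p^{\rho b}$ and the relevant exponential sums are $\grf_{\rho b}$ rather than $\grf_{kb}$. Summing over $\bfm$ then produces the analog of (\ref{5.3}). Cauchy's inequality combined with Lemma \ref{lemma3.5}, whose hypothesis $b \ge (r-1)a$ is precisely what is assumed here, now yields
$$|\grG_{a,b}^\bfsig(\bfalp;\xi,\eta;\bfm)|^2 \ll M^{(r-1)a} \sum_{\bfzet \in \calC_{a,b}^{\bfsig,r,\rho}(\bfm;\xi,\eta)} \prod_{i=1}^r |\grf_{\rho b}(\bfalp;\zet_i)|^2,$$
the factor $M^{(r-1)a}$ replacing the $M^{\frac{1}{2}r(r-1)(a+b)}$ of Lemma \ref{lemma5.1}.

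From here I would apply H\"older's inequality as in the passage to equation (6.7) of \cite{Woo2011a}, absorbing the at most $(p^{\rho b - a})^r$ residue classes $\bfzet \mmod{p^{\rho b}}$ with $\bfzet \equiv \xi \mmod{p^a}$, to obtain
$$K_{a,b}^{\bfsig,\bftau}(X;\xi,\eta) \ll M^{(r-1)a}(M^{\rho b - a})^r \max_{\substack{1 \le \zet \le p^{\rho b}\\ \zet \equiv \xi \mmod{p^a}}} \oint |\grf_{\rho b}(\bfalp;\zet)^{2r} \grF_b^\bftau(\bfalp;\eta)^{2u}| \d\bfalp.$$
A final H\"older step, identical to the closing manoeuvre of the proof of Lemma \ref{lemma5.1}, splits this integral as $U_1^{1-r/s} U_2^{r/s}$, with $U_1 \le \oint |\grf_b(\bfalp;\eta)|^{2s+2r} \d\bfalp \ll J_{s+r}(X/M^b)$ by Lemma \ref{lemma2.1}, and $U_2 = I_{b,\rho b}^\bftau(X;\eta,\zet) \le I_{b,\rho b}(X)$. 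The side condition $\zet \not\equiv \eta \mmod{p}$ needed to invoke (\ref{2.14}) follows from $a \ge 1$, $\zet \equiv \xi \mmod{p^a}$ and $\eta \not\equiv \xi \mmod{p}$, and taking the maximum over $\xi, \eta, \bfsig, \bftau$ delivers the stated bound. The only non-routine feature of the argument is bookkeeping: one must consistently use the $\calR(\rho b)$-equivalence classes $\calC_{a,b}^{\bfsig,r,\rho}$ and the sums $\grf_{\rho b}$ in place of their $kb$-counterparts, so that the cardinality bound supplied by Lemma \ref{lemma3.5} remains applicable.
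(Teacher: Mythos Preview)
Your proposal is correct and follows essentially the same approach as the paper's own proof: both adapt the argument of Lemma~\ref{lemma5.1} by replacing Lemma~\ref{lemma3.3} with Lemma~\ref{lemma3.5}, switching from $\calR(kb)$-equivalence classes and $\grf_{kb}$ to their $\rho b$-counterparts, and tracking the resulting factors $M^{(r-1)a}$ and $(M^{\rho b-a})^r$. Your explicit remark that the hypothesis $b\ge (r-1)a$ is exactly what Lemma~\ref{lemma3.5} requires is the one point the paper leaves implicit, so your bookkeeping is if anything slightly more transparent.
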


\begin{proof} Initially we follow the argument of the proof of Lemma \ref{lemma5.1}, identifying $K_{a,b}^{\bfsig,\bftau}(X;\xi,\eta)$ with the number of integral solutions of the system (\ref{5.1}) with its attendant conditions, and observing that the system of congruences (\ref{5.2}) necessarily holds for each solution. We now write
$$\grG_{a,b}^\bfsig (\bfalp;\xi,\eta;\bfm)=\sum_{\bfzet \in \calC_{a,b}^{\bfsig,r,\rho}(\bfm;\xi,\eta)}\prod_{i=1}^r\grf_{\rho b}(\sig_i\bfalp;\zet_i),$$
and note as before that the relation (\ref{5.3}) again holds. An application of Cauchy's inequality in this instance leads from Lemma \ref{lemma3.5} to the estimate
\begin{align*}
|\grG_{a,b}^\bfsig (\bfalp;\xi,\eta;\bfm)|^2&\le \text{card}(\calC_{a,b}^{\bfsig,r,\rho}(\bfm;\xi,\eta))\sum_{\bfzet\in \calC_{a,b}^{\bfsig,r,\rho}(\bfm;\xi,\eta)}\prod_{i=1}^r|\grf_{\rho b}(\bfalp;\zet_i)|^2\\
&\ll M^{(r-1)a}\sum_{\bfzet\in \calC_{a,b}^{\bfsig,r,\rho}(\bfm;\xi,\eta)}\prod_{i=1}^r|\grf_{\rho b}(\bfalp;\zet_i)|^2,
\end{align*}
and hence
$$K_{a,b}^{\bfsig,\bftau}(X;\xi,\eta)\ll M^{(r-1)a}\sum_{\substack{1\le \bfzet\le p^{\rho b}\\ \bfzet\equiv \xi\mmod{p^a}}}\oint \Bigl( \prod_{i=1}^r|\grf_{\rho b}(\bfalp;\zet_i)|^2\Bigr) |\grF_b^\bftau (\bfalp;\eta)|^{2u}\d\bfalp .$$
From here, as in the argument leading to (\ref{5.4}) above, one obtains
\begin{align}
K_{a,b}^{\bfsig,\bftau}&(X;\xi,\eta)\notag \\
&\ll M^{(r-1)a}(M^{\rho b-a})^r\max_{\substack{1\le \zet \le p^{\rho b}\\ \zet\equiv \xi\mmod{p^a}}}\oint |\grf_{\rho b}(\bfalp;\zet)^{2r}\grF_b^\bftau(\bfalp;\eta)^{2u}|\d\bfalp .\label{5.5}
\end{align}

\par Applying H\"older's inequality as in the concluding paragraph of the proof of Lemma \ref{lemma5.1}, we deduce that
$$\oint|\grf_{\rho b}(\bfalp;\zet)^{2r}\grF_b^\bftau(\bfalp;\eta)^{2u}|\d\bfalp \ll (J_{s+r}(X/M^b))^{1-r/s}(I_{b,\rho b}(X))^{r/s},$$
and the conclusion of the lemma follows from (\ref{5.5}).
\end{proof}

A crude but simple upper bound for $K_{a,b}(X)$ is useful in simplifying the argument to come.

\begin{lemma}\label{lemma5.3}
Suppose that $a$ and $b$ are integers with $0\le a<b\le \tet^{-1}$. Then
$$\llbracket K_{a,b}(X)\rrbracket \ll X^{\eta_{s+r}+\del}(M^{b-a})^\kap.$$
\end{lemma}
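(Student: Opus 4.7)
The plan is to apply H\"older's inequality to $K_{a,b}^{\bfsig,\bftau}(X;\xi,\eta)$, then reduce moments of $\grF_c^\bftau$ to moments of $\grf_{c+1}$ via the power-mean inequality and Lemma \ref{lemma2.1}, and finally invoke the a priori bound (\ref{2.8}) on $J_{s+r}$. Specifically, applying H\"older with conjugate exponents $(s+r)/r$ and $(s+r)/s$, and using the identity $2u(s+r)/s=2(s+r)/r$ which stems from $s=ru$, yields
$$K_{a,b}^{\bfsig,\bftau}(X;\xi,\eta)\le \Bigl(\oint |\grF_a^\bfsig(\bfalp;\xi)|^{2(s+r)/r}\,\d\bfalp\Bigr)^{r/(s+r)}\Bigl(\oint |\grF_b^\bftau(\bfalp;\eta)|^{2(s+r)/r}\,\d\bfalp\Bigr)^{s/(s+r)}.$$

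For each factor, the plan is to expand the sum $\grF_c^\bftau=\sum_{\bfxi\in \Xi_c^r(\eta)}\prod_i \grf_{c+1}(\tau_i\bfalp;\xi_i)$ via the power-mean inequality (accounting for the cardinality $|\Xi_c^r(\eta)|\le M^r$), distribute the resulting integral of a product of $r$ factors of $\grf_{c+1}$ via H\"older with equal weights $1/r$, and invoke Lemma \ref{lemma2.1} to arrive at the estimate
$$\oint |\grF_c^\bftau(\bfalp;\eta)|^{2(s+r)/r}\,\d\bfalp \ll M^{2(s+r)}J_{s+r}(X/M^c).$$
Substituting the bound $J_{s+r}(Y)\ll Y^{\lam_{s+r}^*+\del}$ from (\ref{2.8}), with $\lam_{s+r}^*=2(s+r)-\kap+\eta_{s+r}$, and using a trivial bound in the edge case where $X/M^c$ is too small to apply (\ref{2.8}), one then arrives at an explicit upper bound on $K_{a,b}^{\bfsig,\bftau}(X;\xi,\eta)$ in terms of $M^{2(s+r)}$, $X^{\eta_{s+r}+\del}$, and appropriate powers of $X/M^a$ and $X/M^b$.

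The remainder is elementary bookkeeping. Dividing by the normalizing factor $(X/M^b)^{2s}(X/M^a)^{2r-\kap}$ from (\ref{2.17}) and simplifying, the surviving $M$-exponent is $\kap s(b-a)/(s+r)$, which is at most $\kap(b-a)$ since $s/(s+r)<1$. The overhead factor $M^{O(s+r)}$ arising from the power-mean step is absorbed into $X^\del$ by virtue of the smallness of $\tet$ in (\ref{2.9}), and the $X^{\eta_{s+r}+\del}$ factor is precisely the one supplied by (\ref{2.8}). The main obstacle is organising the H\"older exponents and the various factors of $M$ cleanly, but the argument is otherwise routine.
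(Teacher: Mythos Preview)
Your overall H\"older shape is fine, but the step where you ``absorb'' the power-mean overhead $M^{O(s+r)}$ into $X^\del$ does not work, and this is a genuine gap. In the setup of \S2, the parameter $\del$ is fixed with $\del<(Ns)^{-3N}$, which is deliberately chosen \emph{smaller} than $\tet=N^{-1/2}(r/s)^{N+2}$; the paper even says explicitly that ``$\del$ is small compared to $\tet$''. Consequently $M^{2(s+r)}=X^{2(s+r)\tet}$ is far larger than $X^\del$, not smaller. If you trace your bound through, the power-mean expansion together with Lemma~\ref{lemma2.1} at level $c+1$ rather than $c$ leaves you with an extra factor of roughly $M^{\kap}$ on the right-hand side, which you cannot swallow into either $X^\del$ or $(M^{b-a})^\kap$.

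The paper avoids this entirely by exploiting positivity (orthogonality) rather than the power-mean inequality: since $K_{a,b}^{\bfsig,\bftau}(X;\xi,\eta)$ counts solutions of a Diophantine system, and every such solution has its $\bfx$-variables congruent to $\xi\mmod{p^a}$ and its $\bfv,\bfw$-variables congruent to $\eta\mmod{p^b}$, one has directly
\[
K_{a,b}^{\bfsig,\bftau}(X;\xi,\eta)\le \oint |\grf_a(\bfalp;\xi)^{2r}\grf_b(\bfalp;\eta)^{2s}|\d\bfalp ,
\]
with no $M$-overhead at all. From there your H\"older step and Lemma~\ref{lemma2.1} apply at levels $a$ and $b$ (not $a+1$ and $b+1$), giving $(J_{s+r}(X/M^a))^{r/(s+r)}(J_{s+r}(X/M^b))^{s/(s+r)}$, and the bookkeeping you describe then goes through cleanly. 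The fix is thus to replace the power-mean expansion of $\grF_c^\bftau$ by this one-line positivity observation.
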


\begin{proof} We adapt the argument of the proof of \cite[Lemma 6.2]{Woo2011a}. Consider fixed integers $\xi$ and $\eta$ with $1\le \xi\le p^a$ and $1\le \eta\le p^b$, and $r$-tuples $\bfsig,\bftau\in \Sig_r$. Then from (\ref{2.13}) it follows by orthogonality combined with H\"older's inequality that
\begin{align*}
K_{a,b}^{\bfsig,\bftau}(X;\xi,\eta)&\le \oint |\grf_a(\bfalp;\xi)^{2r}\grf_b(\bfalp;\eta)^{2s}|\d\bfalp \\
&\le \Bigl( \oint |\grf_a(\bfalp;\xi)|^{2s+2r}\d\bfalp \Bigr)^{r/(s+r)}\Bigl( \oint |\grf_b(\bfalp;\eta)|^{2s+2r}\d\bfalp\Bigr)^{s/(s+r)}.
\end{align*}
Consequently, Lemma \ref{lemma2.1} delivers the bound
$$K_{a,b}(X)\ll (J_{s+r}(X/M^a))^{r/(s+r)}(J_{s+r}(X/M^b))^{s/(s+r)},$$
whence
\begin{align*}
\llbracket K_{a,b}(X)\rrbracket&\ll \frac{X^\del \left( (X/M^a)^{r/(s+r)}(X/M^b)^{s/(s+r)}\right)^{2s+2r-\kap+\eta_{s+r}}}{(X/M^b)^{2s}(X/M^a)^{2r-\kap}}\\
&\ll X^{\eta_{s+r}+\del}(M^{b-a})^{\kap s/(s+r)}\ll X^{\eta_{s+r}+\del}(M^{b-a})^\kap .
\end{align*}
This completes the proof of the lemma.
\end{proof}

\section{The pre-congruencing step} In order to fix choices for $\xi$ and $\eta$ in \S\S3--5, one must first initiate the congruencing process. It is here that the choice for the prime number $p$ is fixed once and for all. Before delving further into the details of this pre-congruencing step, we pause to introduce some additional notation. We amend the definition of the set $\Xi_c^r(\xi)$ from the discussion leading to (\ref{2.11}) as follows. When $\Eta\subseteq \{1,\ldots ,p\}$, we denote by $\Xi(\Eta)$ the set of $r$-tuples $(\xi_1,\ldots ,\xi_r)$ satisfying $1\le \bfxi\le p$ and in addition the property that one has neither $\xi_i\equiv \zet\mmod{p}$ for any $\zet\in \Eta$ $(1\le i\le r)$, nor $\xi_i\equiv \xi_j\mmod{p}$ for any $i$ and $j$ with $1\le i<j\le r$. Recalling (\ref{2.10}), we next define the exponential sum $\grF(\bfalp;\Eta)$ by putting
\begin{equation}\label{6.1}
\grF(\bfalp;\Eta)=\sum_{\bfxi\in \Xi(\Eta)}\prod_{i=1}^r\grf_1(\bfalp;\xi_i).
\end{equation}
Also, when $\Eta$ is a subset of $\{1,\ldots,p\}$ with cardinality $k-r$, we write
$$L(\bfalp;\Eta)=\prod_{\zet\in \Eta}\grf_1(\bfalp;\zet).$$
Finally, we write
\begin{align}
\Itil_c(X;\eta)&=\oint |\grF(\bfalp;\{\eta\})^2\grf_c(\bfalp;\eta)^{2s}|\d\bfalp ,\label{6.2}\\
\Ktil_c^\bftau(X;\eta)&=\oint |\grF(\bfalp;\{\eta\})^2\grF_c^\bftau (\bfalp;\eta)^{2u}|\d\bfalp ,\label{6.3}\\
\Ktil_c(X)&=\max_{1\le \eta\le p^c}\max_{\bftau\in\Sig_r}K_c^\bftau (X;\eta).\label{6.4}
\end{align}

\begin{lemma}\label{lemma6.1}
Suppose that $s\ge \max\{k+1-r,2r\}$ and $\kap\le s+r$. Then there exists a prime number $p$ with $M<p\le 2M$, and an integer $h\in\{0,1\}$, for which one has
$$J_{s+r}(X)\ll M^{2s+h(r-1)}\Ktil_{1+h}(X).$$
\end{lemma}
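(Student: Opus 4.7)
The plan is to follow a strategy closely parallel to Lemma 4.1 but executed from the ``ground floor,'' in which there are no level-$a$ sums at all; we replace $f(\bfalp;X)^{2s+2r}$ by a weighted product of $|\grF(\bfalp;\{\eta\})|^2$ (which plays the role of the outer $\grF_a^\bfsig$) and $|\grF_{1+h}^\bftau(\bfalp;\eta)|^{2u}$ (which plays the role of the inner $\grF_b^\bftau$). First I would fix any prime $p\in(M,2M]$ (the Prime Number Theorem provides one) and decompose $f(\bfalp;X)=\sum_{\eta=1}^p \grf_1(\bfalp;\eta)$. Applying H\"older's inequality to the $|f|^{2s}$ factor yields
$$|f(\bfalp;X)|^{2s}\le p^{2s-1}\sum_{\eta=1}^p|\grf_1(\bfalp;\eta)|^{2s},$$
and after multiplication by $|f|^{2r}$, integration, and pigeonholing over $\eta$, one extracts the target factor $M^{2s}$ and reduces to an integral of the form $\max_\eta \oint|f(\bfalp;X)|^{2r}|\grf_1(\bfalp;\eta)|^{2s}\d\bfalp$.

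Next I would convert the $|f|^{2r}$ factor into $|\grF(\bfalp;\{\eta\})|^2$. Expanding $f^r=\sum_{\bfxi\in[1,p]^r}\prod_i \grf_1(\bfalp;\xi_i)$, one splits the sum according to whether $\bfxi\in\Xi(\{\eta\})$ (pairwise distinct entries, none $\equiv \eta\pmod p$), yielding $\grF(\bfalp;\{\eta\})$ by (\ref{6.1}), or whether $\bfxi$ is ``bad'' (a repeat, or some $\xi_i=\eta$). Taking modulus, the main contribution is absorbed by Cauchy--Schwarz into the integral $\Itil_1(X;\eta)$ of (\ref{6.2}), whilst the bad contribution is bounded by a combinatorial sum involving at most $2r-1$ distinct residue classes; by H\"older's inequality and Lemma \ref{lemma2.1} this error can be estimated in terms of $J_{t}(X)$ for $t<s+r$, and under the hypotheses $s\ge 2r$ and $\kap\le s+r$ (which control the permissible exponents) it falls into a lower order than the main term. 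The outcome so far is $J_{s+r}(X)\ll M^{2s}\Itil_1(X)$.

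At this stage I would apply a variant of Lemma \ref{lemma4.1} to convert $\Itil_1(X)$ into $\Ktil_{1+h}(X)$. Writing $\grf_1(\bfalp;\eta)=\sum_{\xi\equiv\eta\,(p)}\grf_2(\bfalp;\xi)$, the $2s$ underlying variables with $\xi\equiv\eta\pmod p$ are split into two cases according to whether they occupy at least $r$ or at most $r-1$ distinct residue classes modulo $p^2$. The first case extracts a well-conditioned $r$-tuple and produces, via H\"older, a bound of the shape $\Itil_1(X)\ll (\Ktil_1(X))^{1/(2u)}(\Itil_1(X))^{1-1/(2u)}$; the second case, via pigeonhole over the $\le r-1$ classes, contributes at most $M^{r-1}\Itil_2(X)$. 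Solving gives $\Itil_1(X)\ll \Ktil_1(X)+M^{r-1}\Itil_2(X)$, and a similar dichotomy applied to $\Itil_2$, followed by a truncation argument of Lemma \ref{lemma4.2}-type using the hypothesis $s\ge k+1-r$ (to bound the residual by a product of $J_{s+r}$-style means dominated by $X^\del$ times the target), leaves one with
$$\Itil_1(X)\ll M^{h(r-1)}\Ktil_{1+h}(X)$$
for some $h\in\{0,1\}$. Combining with the preceding paragraph yields the desired estimate, and the prime $p\in(M,2M]$ is thereby fixed once and for all.

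The principal obstacle is the careful bookkeeping of the ``bad'' contributions in paragraph two: one must verify that the tuples with coincidences or entries $=\eta$ produce mean values involving strictly fewer effective variables, and then show that Lemma \ref{lemma2.1} together with the hypothesized bounds $s\ge\max\{k+1-r,2r\}$ and $\kap\le s+r$ force these contributions to lie comfortably below $M^{2s}\Ktil_1(X)+M^{2s+r-1}\Ktil_2(X)$. The role of these hypotheses is precisely to guarantee that enough spare variables are available for H\"older's inequality to render the error trivial and to prevent the normalizing factor $\kap$ from swallowing the gains achieved by the main decomposition.
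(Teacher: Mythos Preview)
Your second half---the conditioning passage from $\Itil_1(X;\eta)$ to $\Ktil_{1+h}(X)$ via the dichotomy on the number of residue classes modulo $p^2$, iterated twice and truncated---matches the paper's argument essentially line for line. The difficulty is in your first half, and it is a genuine gap rather than a bookkeeping issue.

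The paper does \emph{not} reach $J_{s+r}(X)\ll M^{2s}\max_\eta\Itil_1(X;\eta)$ by expanding $f^r$ over $r$-tuples of residues and discarding the bad ones. Instead it detours through an auxiliary mean value $I^*(X)$ built from $k$-tuples $\bfxi\in\Xi^*(p)$ of pairwise distinct residues, and the passage $J_{s+r}(X)\ll I^*(X)$ rests on the $T_0/T_1$ split from \cite[Lemma 3.2]{Woo2011a}: one first strips off solutions with an exact coincidence $x_i=x_m$ among $k$ of the variables (this costs $(J_{s+r})^{1-1/(2s+2r)}$, handled self-referentially), and for the remaining solutions with $x_1,\ldots,x_k$ genuinely distinct one \emph{chooses} the prime $p$ by averaging so that these $k$ integers fall into distinct classes modulo $p$. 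Only then, from $k$ distinct classes, can one always select $r\le k-1$ of them avoiding $\eta$, and this is what produces $\grF(\bfalp;\{\eta\})$ with no error term at all.

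Your route instead asks that, for an arbitrary prime $p$, the contribution from $r$-tuples $\bfxi$ with a repeated residue or with some $\xi_i=\eta$ be negligible. But such tuples do not carry ``fewer effective variables'': two distinct integers $x_i,x_j\in[1,X]$ may well satisfy $x_i\equiv x_j\pmod p$, and the associated integral has the same number of free summation variables as the main term. A quick check already shows the trouble: isolating a single factor $\grf_1(\bfalp;\eta)$ from $f^r$ and absorbing it into $|\grf_1(\bfalp;\eta)|^{2s}$ leads, after H\"older and Lemma~\ref{lemma2.1}, to an error of size $M^{2s}J_{s+r}(X/M)^{(s+1)/(s+r)}J_{s+r}(X)^{(r-1)/(s+r)}$, which is $\ll J_{s+r}(X)\cdot M^{\kap-2r}X^{O(\del)}$; since $\kap$ may be as large as $s+r\ge 3r$, this is not $o(J_{s+r}(X))$. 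The repeated-residue case is no better. The averaging over primes in the paper is precisely what converts ``repeated residue class'' into ``repeated integer,'' which \emph{is} a genuine loss of a variable; your proposal omits this step, and without it the error terms are not under control.
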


\begin{proof} We adapt the argument of the proof of \cite[Lemma 3.2]{Woo2011a}. The quantity $J_{s+r}(X)$ counts the number of integral solutions of the system
$$\sum_{i=1}^{s+r}(x_i^j-y_i^j)=0\quad (1\le j\le k),$$
with $1\le \bfx,\bfy\le X$. Let $T_0$ denote the number of such solutions in which $x_i=x_m$ for some $i$ and $m$ with $1\le i<m\le k$, and let $T_1$ denote the corresponding number of solutions with $x_i=x_m$ for no $i$ and $m$ with $1\le i<m\le k$. Then $J_{s+r}(X)=T_0+T_1$.\par

Write $\Xi^*(p)$ for the set of $k$-tuples $\bfxi=(\xi_1,\ldots ,\xi_k)$, with $1\le \bfxi\le p$, and satisfying the property that $\xi_i\equiv \xi_m\pmod{p}$ for no $i$ and $m$ with $1\le i<m\le k$. In addition, define the exponential sum $\grF^*(\bfalp)$ by putting
$$\grF^*(\bfalp)=\sum_{\bfxi\in \Xi^*(p)}\prod_{i=1}^k\grf_1(\bfalp;\xi_i),$$
and write
\begin{equation}\label{6.5}
I^*(X)=\oint |\grF^*(\bfalp)^2\grf_0(\bfalp;0)^{2s+2r-2k}|\d\bfalp .
\end{equation}
Then the argument of the proof of \cite[Lemma 3.2]{Woo2011a} leading to equations (3.14) and (3.15) of the latter paper reveals that a prime number $p$ exists, with $M<p\le 2M$, for which
$$T_0\ll (J_{s+r}(X))^{1-1/(2s+2r)}\quad \text{and}\quad T_1\ll (I^*(X))^{1/2}(J_{s+r}(X))^{1/2}.$$
We thus infer that
\begin{equation}\label{6.6}
J_{s+r}(X)\ll 1+I^*(X)\ll I^*(X).
\end{equation}

\par Next, splitting the summation in the definition (\ref{2.10}) of $\grf_0(\bfalp;0)$ into arithmetic progressions modulo $p$ and applying H\"older's inequality, we obtain
$$|\grf_0(\bfalp;0)|^{2s+2r-2k}\le p^{2s+2r-2k-1}\sum_{\eta=1}^p|\grf_1(\bfalp;\eta)|^{2s+2r-2k}.$$
It therefore follows from (\ref{6.5}) that
\begin{equation}\label{6.7}
I^*(X)\ll M^{2s+2r-2k}\max_{1\le \eta\le p}T_2(\eta),
\end{equation}
where
$$T_2(\eta)=\oint |\grF^*(\bfalp)^2\grf_1(\bfalp;\eta)^{2s+2r-2k}|\d\bfalp .$$
By orthogonality, the mean value $T_2(\eta)$ counts the integral solutions of the system
$$\sum_{i=1}^k(x_i^j-y_i^j)=\sum_{l=1}^{s+r-k}(v_l^j-w_l^j)\quad (1\le j\le k),$$
with
$$1\le \bfx,\bfy,\bfv,\bfw\le X,\quad \bfv\equiv\bfw\equiv \eta\mmod{p},$$
and satisfying the property that there exist $\bfxi,\bfzet \in \Xi^*(p)$ for which
$$\bfx\equiv\bfxi\mmod{p}\quad \text{and}\quad \bfy\equiv \bfzet\mmod{p}.$$

\par Consider a fixed choice of $\bfxi\in \Xi^*(p)$. One has $\xi_l\equiv \eta\pmod{p}$ for at most one index $l$ with $1\le l\le k$. Since we suppose that $1\le r\le k-1$, it follows that one may relabel indices in such a way that $(\xi_1,\ldots ,\xi_r)\in \Xi_0^r(0)$ and $\xi_i\equiv \eta\mmod{p}$ for no index $i$ with $1\le i\le r$. One may do likewise with the variables $\bfy$. Notice that when $(\xi_1,\ldots ,\xi_k)\in \Xi^*(p)$ and $(\xi_1,\ldots ,\xi_r)\in \Xi^r_0(0)$, then necessarily $\xi_i\equiv \xi_j\mmod{p}$ for no indices $i$ and $j$ with $1\le i\le r$ and $r+1\le j\le k$. On considering the underlying Diophantine equations, therefore, we find that
$$T_2(\eta)\ll \oint \Bigl|\sum_{\substack{\Eta\subseteq \{1,\ldots ,p\}\\ \mathrm{card}(\Eta)=k-r}}\grF(\bfalp;\Eta\cup\{\eta\})L(\bfalp;\Eta)\Bigr|^2|\grf_1(\bfalp;\eta)|^{2s+2r-2k}\d\bfalp .$$

\par An application of the elementary inequality
\begin{equation}\label{6.8}
|z_1\ldots z_n|\le |z_1|^n+\ldots +|z_n|^n
\end{equation}
reveals that
\begin{align*}
L(\bfalp;\Eta)\ll \sum_{\zet\in \Eta}|\grf_1(\bfalp;\zet)|^{k-r},
\end{align*}
and thus we deduce via Cauchy's inequality that
$$T_2(\eta)\ll p^{k-r}\sum_{\substack{\Eta\subseteq \{1,\ldots ,p\}\\ \mathrm{card}(\Eta)=k-r}}\sum_{\zet\in \Eta}\oint |\grF(\bfalp;\Eta\cup\{\eta\})\grf_1(\bfalp;\zet)^{k-r}\grf_1(\bfalp;\eta)^{s+r-k}|^2\d\bfalp .$$
A second application of (\ref{6.8}) shows that
$$|\grf_1(\bfalp;\zet)^{k-r}\grf_1(\bfalp;\eta)^{s+r-k}|^2\ll |\grf_1(\alp;\zet)|^{2s}+|\grf_1(\alp;\eta)|^{2s},$$
and hence we conclude that
$$T_2(\eta)\ll p^{2k-2r}\max_{\substack{\Eta\subseteq \{1,\ldots ,p\}\\ \mathrm{card}(\Eta)=k-r}}\max_{\zet\in \Eta \cup \{\eta\}}\oint |\grF(\bfalp;\Eta\cup \{\eta\})^2\grf_1(\bfalp;\zet)^{2s}|\d\bfalp .$$
Finally, a consideration of the underlying Diophantine system permits the last estimate to be simplified, so that on recalling (\ref{6.2}) we arrive at the bound
\begin{align*}
T_2(\eta)&\ll p^{2k-2r}\max_{1\le \zet\le p}\oint |\grF(\bfalp;\{\zet\})^2\grf_1(\bfalp;\zet)^{2s}|\d\bfalp \\
&\ll M^{2k-2r}\max_{1\le \zet\le p}\Itil_1(X;\zet).
\end{align*}
Returning to (\ref{6.6}) and (\ref{6.7}), we may thus conclude that
\begin{equation}\label{6.9}
J_{s+r}(X)\ll I^*(X)\ll M^{2s}\max_{1\le \eta\le p}\Itil_1(X;\eta).
\end{equation}

\par The mean value $\Itil_1(X;\eta)$ counts the number of integral solutions of the system
$$\sum_{i=1}^r(x_i^j-y_i^j)=\sum_{l=1}^s(v_l^j-w_l^j)\quad (1\le j\le k),$$
with
$$1\le \bfx,\bfy,\bfv,\bfw\le X,\quad \bfv\equiv \bfw\equiv \eta\mmod{p},$$
and satisfying the property that there exist $\bfxi,\bfzet\in \Xi(\{\eta\})$ for which
$$\bfx\equiv \bfxi\mmod{p}\quad \text{and}\quad \bfy\equiv \bfzet\mmod{p}.$$
Let $T_3$ denote the number of such solutions in which the $2s$ integers $v_1,\ldots ,v_s$ and $w_1,\ldots ,w_s$ together occupy at least $r$ distinct residue classes modulo $p^2$, and let $T_4$ denote the corresponding number of solutions in which these integers together lie in at most $r-1$ distinct residue classes modulo $p^2$. Then we see that
\begin{equation}\label{6.10}
\Itil_1(X;\eta)=T_3+T_4.
\end{equation}

\par The argument of the proof of \cite[Lemma 5.1]{Woo2011a} leading to equation (\ref{5.3}) of that paper shows, mutatis mutandis, that for some $\bftau\in \Sig_r$, one has
\begin{align*}
T_3\ll &\, \Bigl( \oint |\grF(\bfalp;\{\eta\})^2\grF_1^\bftau (\bfalp ;\eta)^{2u}|\d\bfalp \Bigr)^{1/(2u)}\\
&\, \times \Bigl( \oint |\grF(\bfalp;\{\eta\})^2\grf_1(\bfalp ;\eta)^{2s}|\d\bfalp \Bigr)^{1-1/(2u)}.
\end{align*}
Then on recalling (\ref{6.2}) and (\ref{6.3}), we deduce from (\ref{6.10}) that
$$\Itil_1(X;\eta)\ll \left( \Ktil_1^\bftau (X;\eta)\right)^{1/(2u)}\left( \Itil_1(X;\eta)\right)^{1-1/(2u)}+T_4,$$
whence
\begin{equation}\label{6.11}
\Itil_1(X;\eta)\ll \Ktil_1^\bftau(X;\eta)+T_4.
\end{equation}

\par On the other hand, the argument of the proof of \cite[Lemma 5.1]{Woo2011a} leading to equation (5.2) of that paper  shows that
$$T_4\ll \sum_{\substack{1\le \eta_1,\ldots,\eta_{r-1}\le p^2\\ \bfeta\equiv \eta\mmod{p}}}\sum_{i=1}^{r-1}\oint |\grF(\bfalp;\{\eta\})^2\grf_2(\bfalp;\eta_i)^{2s}|\d\bfalp .$$
Such a conclusion may also be extracted from the argument of the proof of Lemma \ref{lemma4.1} above. A consideration of the underlying Diophantine system therefore shows that
$$T_4\ll M^{r-1}\max_{1\le \zet\le p^2}\oint |\grF(\bfalp;\{\zet\})^2\grf_2(\bfalp;\zet)^{2s}|\d\bfalp .$$
In view of the relation (\ref{6.11}), therefore, we deduce that
\begin{equation}\label{6.12}
\Itil_1(X;\eta)\ll \Ktil_1^\bftau(X;\eta)+M^{r-1}\max_{1\le \zet\le p^2}\Itil_2(X;\zet).
\end{equation}

We may analyse the mean value $\Itil_2(X;\zet)$ just as in our treatment of $\Itil_1(X;\eta)$ above, and thus we deduce that for some $\bftau\in \Sig_r$, one has
$$\Itil_2(X;\zet)\ll \Ktil_2^\bftau(X;\zet)+M^{r-1}\max_{1\le \tet\le p^3}\Itil_3(X;\tet).$$
On recalling (\ref{6.4}), therefore, we find from (\ref{6.12}) that
$$\Itil_1(X;\eta)\ll \Ktil_1(X)+M^{r-1}\Ktil_2(X)+M^{2r-2}\max_{1\le \eta\le p^3}\Itil_3(X;\eta).$$
Thus we deduce from (\ref{6.9}) that there exists an integer $h\in\{0,1\}$ for which one has
\begin{equation}\label{6.13}
J_{s+r}(X)\ll M^{2s+h(r-1)}\Ktil_{1+h}(X)+M^{2s+2r-2}\max_{1\le \eta\le p^3}\Itil_3(X;\eta).
\end{equation}

Next, on considering the underlying Diophantine system, an application of H\"older's inequality in combination with Lemma \ref{lemma2.1} confirms that
\begin{align*}
\Itil_3(X;\eta)&\ll \Bigl( \oint |\grf_0(\bfalp;0)|^{2s+2r}\d\bfalp \Bigr)^{r/(s+r)}\Bigl( \max_{1\le \eta \le p^3}\oint |\grf_3(\bfalp;\eta)|^{2s+2r}\d\bfalp \Bigr)^{s/(s+r)}\\
&\ll (X^{2s+2r-\kap+\del})^{r/(s+r)}\left( (X/M^3)^{2s+2r-\kap+\del}\right)^{s/(s+r)}.
\end{align*}
Hence we have
$$M^{2s+2r-2}\Itil_3(X;\eta)\ll X^{2s+2r-\kap+\del}M^\ome,$$
where
$$\ome =2s+2r-2-\frac{3s}{s+r}(2s+2r-\kap+\del)\le 2r-2-4s+3s\kap/(s+r).$$
But by hypothesis, one has $\kap\le s+r$ and $s\ge 2r$, and thus
$$\ome\le (2r-2-4s)+3s\le -2.$$
Consequently, we derive the upper bound
$$M^{2s+2r-2}\Itil_3(X;\eta)\ll X^{2s+2r-\kap-2\del}\ll X^{-\del}J_{s+r}(X).$$
On recalling (\ref{6.13}), therefore, we see that there exists an integer $h\in\{0,1\}$ for which
$$J_{s+r}(X)\ll X^{-\del}J_{s+r}(X)+M^{2s+h(r-1)}\Ktil_{1+h}(X).$$
The conclusion of the lemma follows at once.
\end{proof}

We now fix the prime number $p$, once and for all, so that the upper bound for $J_{s+r}(X)$ claimed in the conclusion of Lemma \ref{lemma6.1} holds. In analysing the iterative process, we shall find it useful to have 
available versions of Lemmata \ref{lemma5.1} and \ref{lemma5.2} valid also when $a=0$. It is for this purpose that we prepared Lemma \ref{lemma6.1}, as we now make transparent. In this context, we view the mean 
value $\Ktil_c^\bftau (X;\eta)$ defined in (\ref{6.3}) as a surrogate for $K^{{\mathbf 1},\bftau}_{0,c}(X;0,\eta)$. Here, the implicit condition on variables avoiding the congruence class $\eta$ 
modulo $p$, captured through the exponential sum $\grF(\bfalp;\{\eta\})$ defined in (\ref{6.1}), provides the correct analogue of the condition $\xi\not \equiv \eta\mmod{p}$. With this discussion in mind, we henceforth 
adopt the convention that when $b\in\{1,2\}$, one is to interpret the expression $K_{0,b}(X)$ as $\Ktil_b(X)$.

\begin{lemma}\label{lemma6.2} Suppose that $a$ and $b$ are integers with $0\le a<b\le \tet^{-1}$. Suppose further that $s\ge \max\{k+1-r,2r\}$ and $\kap\le s+r$, and that when $a=0$ one has $b=1$ or $2$. Then
$$K_{a,b}(X)\ll M^{\frac{1}{2}r(r-1)(b+a)}(M^{kb-a})^r(J_{s+r}(X/M^b))^{1-r/s}(I_{b,kb}(X))^{r/s}.$$
\end{lemma}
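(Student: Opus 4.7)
When $a\ge 1$, the stated bound is exactly the conclusion of Lemma \ref{lemma5.1}, so no new argument is required. The content of the lemma is therefore the case $a=0$, where by hypothesis $b\in\{1,2\}$ and, per the convention adopted immediately before the statement, $K_{0,b}(X)=\Ktil_b(X)$. The plan is to mimic the proof of Lemma \ref{lemma5.1} line by line, with $\Ktil_b^\bftau(X;\eta)$ playing the role of $K_{a,b}^{\bfsig,\bftau}(X;\xi,\eta)$ and with Lemma \ref{lemma3.4} replacing Lemma \ref{lemma3.3} at the point where the congruence data is crystallised.

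Fix $\eta$ with $1\le \eta\le p^b$ and $\bftau\in\Sig_r$. By orthogonality, $\Ktil_b^\bftau(X;\eta)$ counts integer solutions of a system of the shape (5.1) in which $\bfx,\bfy$ have residues modulo $p$ drawn from $\Xi(\{\eta\})$, and $\bfv_l,\bfw_l$ $(1\le l\le u)$ are constrained by membership of $\Xi_b^r(\eta)$ modulo $p^{b+1}$. The point is that the definition (\ref{6.1}) of $\grF(\bfalp;\{\eta\})$ encodes precisely the two conditions present in the $a=0$ version of $\calB_{0,b}^{\bfsig,r}(\bfm;0,\eta)$ defined in \S3, namely $\bfx\equiv\bfxi\mmod{p}$ for some $\bfxi\in\Xi_0^r(0)$, together with $\bfx\not\equiv \eta\mmod{p}$. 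An application of the Binomial Theorem, just as in the proof of Lemma \ref{lemma5.1}, shows that these solutions satisfy (\ref{5.2}), and so we may introduce
$$\grG_{0,b}^\bfsig(\bfalp;0,\eta;\bfm)=\sum_{\bfzet\in\calC_{0,b}^{\bfsig,r,k}(\bfm;0,\eta)}\prod_{i=1}^r\grf_{kb}(\sig_i\bfalp;\zet_i)$$
and obtain the analogue of (\ref{5.3}) expressing $\Ktil_b^\bftau(X;\eta)$ as a sum over $\bfm$ of integrals weighted by $|\grG_{0,b}^\bfsig\,\grF_b^\bftau|^2$.

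Cauchy's inequality, now in tandem with the bound $B_{0,b}^{r,k}(p)\le k!p^{\frac{1}{2}r(r-1)b}$ supplied by Lemma \ref{lemma3.4}, delivers the analogue of the penultimate display in the proof of Lemma \ref{lemma5.1}, namely
$$\Ktil_b^\bftau(X;\eta)\ll M^{\frac{1}{2}r(r-1)b}(M^{kb})^r\max_{\substack{1\le\zet\le p^{kb}\\ \zet\not\equiv \eta\mmod{p}}}\oint|\grf_{kb}(\bfalp;\zet)^{2r}\grF_b^\bftau(\bfalp;\eta)^{2u}|\d\bfalp.$$
Here the constraint $\zet\not\equiv\eta\mmod{p}$ is precisely what is inherited from the $\bfzet\not\equiv\eta\mmod{p}$ condition built into $\calC_{0,b}^{\bfsig,r,k}(\bfm;0,\eta)$, and this is exactly what is needed in order to invoke $I_{b,kb}(X)$ through the maximum taken in (\ref{2.14}). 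A final application of H\"older's inequality and Lemma \ref{lemma2.1} — identical to that used to close the proof of Lemma \ref{lemma5.1} — converts this into the asserted inequality with $a=0$.

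The only genuinely delicate issue in this adaptation is to verify that the non-singularity condition $\zet\not\equiv\eta\mmod{p}$ propagates to the right-hand side even though no analogue of the condition $\xi\not\equiv\eta\mmod{p}$ is explicitly present in the hypothesis on $\bfx,\bfy$ when $a=0$. This is precisely why the pre-congruencing step of \S6 was engineered to produce the sum $\grF(\bfalp;\{\eta\})$ in place of the na\"ive sum $\grf_0(\bfalp;0)$: the $\Xi(\{\eta\})$ condition is the $a=0$ surrogate for $\xi\not\equiv\eta\mmod{p}$, and once this is in place the algebraic core of Lemma \ref{lemma5.1} — the Binomial Theorem manoeuvre and the Hensel-type lift embodied in the $B_{a,b}^{r,k}(p)$ estimates — carries over verbatim.
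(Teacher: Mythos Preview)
Your proposal is correct and follows precisely the approach taken in the paper: reduce to Lemma~\ref{lemma5.1} when $a\ge 1$, and for $a=0$ imitate that proof with Lemma~\ref{lemma3.4} substituted for Lemma~\ref{lemma3.3}, exploiting the convention $K_{0,b}(X)=\Ktil_b(X)$. The paper's own proof is in fact rather terser than yours, essentially just pointing to this substitution; your explicit discussion of why the condition $\zet\not\equiv\eta\mmod{p}$ survives (via its presence in the definition of $\calB_{0,b}^{\bfsig,r}$) is a useful clarification of what the paper leaves implicit in the phrase ``with the discussion of the preamble to the present lemma in mind''.
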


\begin{proof} When $a\ge 1$ the conclusion asserted by the lemma is an immediate consequence of that supplied by Lemma \ref{lemma5.1}. We therefore focus attention on the situation in which $a=0$ and $b\in\{1,2\}$. Here, we find from (\ref{6.3}) and (\ref{6.4}) that
\begin{equation}\label{6.14}
K_{0,b}(X)=\Ktil_b(X)=\max_{1\le \eta\le p^b}\max_{\bftau\in\Sig_r}\oint |\grF(\bfalp;\{\eta\})^2\grF_b^\bftau (\bfalp;\eta)^{2u}|\d\bfalp .
\end{equation}
Imitating the argument of the proof of Lemma \ref{lemma5.1}, and substituting the application of Lemma \ref{lemma3.4} for our earlier use of Lemma \ref{lemma3.3}, with the discussion of the preamble to the present lemma in mind, we find that
$$K_{0,b}(X)\ll M^{\frac{1}{2}r(r-1)b}(M^{kb})^r(J_{s+r}(X/M^b))^{1-r/s}(I_{b,kb}(X))^{r/s},$$
and thus the desired conclusion does indeed hold when $a=0$ and $b\in\{1,2\}$.
\end{proof}

\begin{lemma}\label{lemma6.3} Suppose that $a$ and $b$ are integers with $0\le a<b\le \tet^{-1}$ and $b\ge (r-1)a$. Suppose further that $s\ge \max\{k+1-r,2r\}$ and $\kap\le s+r$, and that when $a=0$ one has $b=1$ or $2$. Then
$$K_{a,b}(X)\ll M^{(r-1)a}(M^{\rho b-a})^r(J_{s+r}(X/M^b))^{1-r/s}(I_{b,\rho b}(X))^{r/s}.$$
\end{lemma}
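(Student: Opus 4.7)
The plan is to mirror the proof of Lemma \ref{lemma6.2}, substituting the lifting bound to the intermediate modulus $p^{\rho b}$ (provided by Lemmata \ref{lemma3.5} and \ref{lemma3.6}) for the one to the full modulus $p^{kb}$ (provided by Lemmata \ref{lemma3.3} and \ref{lemma3.4}). The argument splits into the two familiar regimes according to whether $a \geq 1$ or $a = 0$.

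When $a \geq 1$, the hypotheses of Lemma \ref{lemma5.2} are satisfied directly, since we have $b \geq (r-1)a$, $1 \leq a < b \leq \tet^{-1}$, and the bound is precisely the one asserted in the conclusion. So in this range there is nothing further to do.

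The substantive case is $a = 0$ with $b \in \{1,2\}$. Here I would adopt the convention, fixed in the preamble to Lemma \ref{lemma6.2}, that $K_{0,b}(X) = \Ktil_b(X)$, and then follow the proof of Lemma \ref{lemma5.2} line by line. Specifically, the mean value $K_{0,b}^{\mathbf{1},\bftau}(X;0,\eta)$ (interpreted via $\Ktil_b^\bftau(X;\eta)$) counts solutions of the analogue of the Diophantine system (\ref{5.1}) with $\bfx,\bfy$ drawn from $\Xi(\{\eta\})$ modulo $p$, and these solutions again satisfy the congruences (\ref{5.2}). One next introduces
$$\grG_{0,b}^\bfsig(\bfalp;0,\eta;\bfm) = \sum_{\bfzet \in \calC_{0,b}^{\bfsig,r,\rho}(\bfm;0,\eta)} \prod_{i=1}^r \grf_{\rho b}(\sig_i\bfalp;\zet_i),$$
obtains the analogue of (\ref{5.3}) by orthogonality, and applies Cauchy's inequality. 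At the crucial step, one invokes Lemma \ref{lemma3.6} in place of Lemma \ref{lemma3.5}; since Lemma \ref{lemma3.6} supplies $B_{0,b}^{r,\rho}(p) \leq k!$ (with no factor of $p^{(r-1)a}$, because $a = 0$), the factor $M^{(r-1)a}$ in the statement becomes trivially $1$, consistent with the claimed bound.

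From there, the remainder of the argument is a carbon copy of the concluding paragraph of Lemma \ref{lemma5.2}: one arrives at the analogue of (\ref{5.5}) with $a$ set to zero, applies H\"older's inequality to split into a factor bounded via Lemma \ref{lemma2.1} by $(J_{s+r}(X/M^b))^{1-r/s}$ and the remaining factor absorbed into $I_{b,\rho b}(X)$. The only point requiring slight care is the legitimacy of the non-singularity condition $\zet \not\equiv \eta \pmod{p}$ needed when we bound $I_{b,\rho b}(X)$: when $a \geq 1$ this followed from $\zet \equiv \xi \pmod{p^a}$ together with $\eta \not\equiv \xi \pmod{p}$, but when $a = 0$ it is enforced instead by the factor $\grF(\bfalp;\{\eta\})$ inside $\Ktil_b^\bftau(X;\eta)$, which restricts the variables to congruence classes avoiding $\eta$ modulo $p$. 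This is exactly the role of that exponential sum as a surrogate for $K_{0,b}^{\mathbf{1},\bftau}(X;0,\eta)$, so no obstruction arises. I expect this bookkeeping about the $a = 0$ non-singularity to be the only place demanding attention; everything else is routine transcription from the proofs of Lemmata \ref{lemma5.2} and \ref{lemma6.2}.
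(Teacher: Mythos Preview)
Your proposal is correct and follows essentially the same approach as the paper: invoke Lemma \ref{lemma5.2} directly when $a\ge 1$, and for $a=0$ with $b\in\{1,2\}$ imitate the proof of Lemma \ref{lemma5.2} via the convention $K_{0,b}(X)=\Ktil_b(X)$, substituting Lemma \ref{lemma3.6} for Lemma \ref{lemma3.5}. Your explicit attention to the non-singularity condition $\zet\not\equiv\eta\pmod{p}$ in the $a=0$ case is a detail the paper leaves implicit under ``the discussion of the preamble to Lemma \ref{lemma6.2} in mind,'' but the argument is otherwise identical.
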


\begin{proof} In this instance, when $a\ge 1$ the conclusion asserted by the lemma follows from Lemma \ref{lemma5.2}. We therefore focus again on the situation in which $a=0$ and $b\in\{1,2\}$. We again find from (\ref{6.3}) and (\ref{6.4}) that the relation (\ref{6.14}) holds. In present circumstances, by imitating the argument of the proof of Lemma \ref{lemma5.2}, and substituting the application of Lemma \ref{lemma3.6} for our earlier use of Lemma \ref{lemma3.5}, with the discussion of the preamble to Lemma \ref{lemma6.2} in mind, we find that
$$K_{0,b}(X)\ll (M^{\rho b})^r(J_{s+r}(X/M^b))^{1-r/s}(I_{b,\rho b}(X))^{r/s}.$$
This yields the desired conclusion when $a=0$ and $b\in \{1,2\}$, and completes the proof of the lemma.
\end{proof}

\section{The efficient congruencing step, II} By means of Lemmata \ref{lemma6.2} and \ref{lemma6.3}, one is able to relate $K_{a,b}(X)$ either to $I_{b,kb}(X)$ or $I_{b,\rho b}(X)$, the purpose of the pre-congruencing step being to remove the constraint $a\ge 1$ imposed in \S5 so as to permit $a$ to be zero. In this section we complete the discussion of the efficient congruencing step by combining Lemma \ref{lemma4.3} first with Lemma \ref{lemma6.2}, and then with Lemma \ref{lemma6.3}, so as to obtain the basic iterative relations between $K_{a,b}(X)$ and $K_{b,kb+h}(X)$ in the first instance, and between $K_{a,b}(X)$ and $K_{b,\rho b+h}(X)$ in the second instance.

\begin{lemma}\label{lemma7.1} Define $s_0$ and $\kap$ as in (\ref{2.6}), and put $s=s_0$. Suppose that $a$ and $b$ are integers with $0\le a<b\le \frac{1}{3}(k\tet)^{-1}$, and put $H=2(k-1)b$ and $g=b-ka$. Suppose further that when $a=0$, one has $b=1$ or $2$. Then there exists an integer $h$, with $0\le h<H$, having the property that
\begin{align*}
\llbracket K_{a,b}(X)\rrbracket \ll &\, X^\del \left( M^{sg-(2s-r+1)h}\llbracket K_{b,kb+h}(X)\rrbracket \right)^{r/s}(X/M^b)^{\eta_{s+r}(1-r/s)}\\
&\, +M^{-rH/(3s)}(X/M^b)^{\eta_{s+r}}.
\end{align*}
\end{lemma}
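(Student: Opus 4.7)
The plan is to chain Lemma \ref{lemma6.2} (congruencing) with Lemma \ref{lemma4.3} (conditioning) and then normalise via (\ref{2.17}). Under the hypotheses, with $s=s_0=rk$ and $\kap$ given by (\ref{2.6}), the preconditions of Lemma \ref{lemma6.2} hold: $s\ge\max\{k+1-r,2r\}$ is immediate for $k\ge3$, the inequality $\kap\le r(k+1)=s+r$ follows from the explicit formula, and the restriction on $a=0$ is part of our hypothesis. Lemma \ref{lemma6.2} therefore yields
\begin{equation*}
K_{a,b}(X)\ll M^{\tfrac12 r(r-1)(b+a)+r(kb-a)}\bigl(J_{s+r}(X/M^b)\bigr)^{1-r/s}\bigl(I_{b,kb}(X)\bigr)^{r/s}.
\end{equation*}
Since $M^b\le X^{\tet b}\le X^{1/(3k)}$, inequality (\ref{2.8}) gives $J_{s+r}(X/M^b)\ll(X/M^b)^{2s+2r-\kap+\eta_{s+r}+\del}$. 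Now apply Lemma \ref{lemma4.3} to $I_{b,kb}(X)$, reading $a\mapsto b$ and $b\mapsto kb$: the associated parameter $2(kb-b)=2(k-1)b$ matches the $H$ in the statement; the hypothesis $kb+H=(3k-2)b\le\tet^{-1}$ follows from $b\le(3k\tet)^{-1}$, and $s=rk\ge 3r$ for $k\ge3$. This yields an integer $0\le h<H$ with
\begin{equation*}
I_{b,kb}(X)\ll M^{h(r-1)}K_{b,kb+h}(X)+M^{-(r+2)H/2}X^\del (X/M^{kb})^{2s}(X/M^b)^{2r-\kap+\eta_{s+r}}.
\end{equation*}
Substituting splits $K_{a,b}(X)$ into two contributions, which after dividing through by $(X/M^b)^{2s}(X/M^a)^{2r-\kap}$ in line with (\ref{2.17}) become the two terms of the claim.

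For the error contribution, raising the Lemma \ref{lemma4.3} error term to the $r/s$ power produces a dominant factor $M^{-(r+2)rH/(2s)}$, and the residual $M$-powers assembled from Lemma \ref{lemma6.2} and from the renormalisation are all of order at most $b\le H/(2(k-1))$. A direct comparison then yields an overall $M$-exponent of at most $-rH/(3s)$, with comfortable margin; the $X/M^b$ and $X^\del$ factors collapse to $(X/M^b)^{\eta_{s+r}}$, as required.

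For the main contribution, the $X$-exponent after normalisation collapses cleanly to $(\eta_{s+r}+\del)(1-r/s)$, matching $X^\del(X/M^b)^{\eta_{s+r}(1-r/s)}$ up to harmless absorption. The main obstacle is the final verification that the aggregated $M$-exponent is at most $(sg-(2s-r+1)h)r/s$. Careful bookkeeping shows the $h$-terms combine exactly to $-(2s-r+1)hr/s$, while the $a,b$-contribution reduces to $aC_a+bC_b$ with
\begin{equation*}
C_a=\tfrac12 r(r+1)+rk-\kap,\qquad C_b=\tfrac12 r(r-1)-r(k+1)+\kap.
\end{equation*}
Direct substitution of $\kap=(rk-\tfrac12 r(r+1))\tfrac{k+1}{k-1}$ yields the key identity $C_a+kC_b=0$, whence $aC_a+bC_b=C_b(b-ka)=C_b g$. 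Since $C_b\le 0$ (with equality only when $r=1$) and the lemma will be applied in a regime with $g\ge 0$, one has $aC_a+bC_b\le 0\le rg$, which delivers the required bound $(sg-(2s-r+1)h)r/s$ and completes the proof.
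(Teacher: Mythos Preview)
Your overall strategy coincides with the paper's: apply Lemma~\ref{lemma6.2}, bound $J_{s+r}(X/M^b)$ via (\ref{2.8}), feed $I_{b,kb}(X)$ through Lemma~\ref{lemma4.3}, and renormalise via (\ref{2.17}). The skeleton is correct, but the final bookkeeping contains an arithmetic error that invalidates your sign argument.

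Your stated coefficients $C_a=\tfrac12 r(r+1)+rk-\kap$ and $C_b=\tfrac12 r(r-1)-r(k+1)+\kap$ are wrong. A careful expansion (exactly as in the paper's display for $\ome(a,b)$) shows that the $a$-coefficient is $\tfrac12 r(r+1)-\kap$ and the $b$-coefficient is $\kap-rk+\tfrac12 r(r-1)$; your values differ by $+rk$ and $-r$ respectively, which is precisely an extraneous subtraction of $rg=r(b-ka)$. With the correct coefficients one finds
\[
\ome(a,b)=\Bigl(\kap-rk+\tfrac12 r(r-1)\Bigr)g=\frac{r(k-r)}{k-1}\,g,
\]
and this coefficient is \emph{non-negative}, not $\le 0$. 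Thus your conclusion ``$C_b\le 0$, hence $aC_a+bC_b\le 0$'' fails. The inequality $\ome(a,b)\le rg$ that is actually needed follows instead from $\frac{r(k-r)}{k-1}\le r$ together with $g\ge 0$; you are right that the application regime guarantees $g\ge 0$, but the mechanism you invoke is incorrect.

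Your handling of the error term is also too loose. The claim that the residual $M$-powers are ``of order at most $b$'' and that a ``direct comparison'' yields $-rH/(3s)$ with ``comfortable margin'' hides a genuinely tight computation: one must show $\ome(a,b)s/r\le (r+1)H/2$, which the paper verifies via $\frac{s(k-r)}{k-1}b\le (r+1)(k-1)b$, reducing to $(k-r-1)b\ge 0$. This gives error exponent $\le -rH/(2s)$, after which $X^\del\ll M^{rH/(6s)}$ absorbs to $-rH/(3s)$. The margin is not comfortable; it is exactly one unit in $H$.
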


\begin{proof} We assume throughout that $k\ge 3$ and $1\le r\le k-1$, so we may begin with the observation that $s=rk\ge \max\{3r,k-r+1\}$. Next, since $\frac{1}{2}r(r+1)\ge r$ for $r\ge 1$, we find from (\ref{2.6}) that
$$\kap=(rk-\tfrac{1}{2}r(r+1))\left( \frac{k+1}{k-1}\right) \le (rk-r)\left( \frac{k+1}{k-1}\right) =rk+r,$$
so that $\kap\le s+r$. On recalling (\ref{2.17}), we may therefore apply Lemma \ref{lemma6.2} to deduce that
\begin{equation}\label{7.1}
\llbracket K_{a,b}(X)\rrbracket \ll (M^b)^{2s}(M^a)^{2r-\kap}M^{\frac{1}{2}r(r-1)(b+a)}(M^{kb-a})^rT_1^{1-r/s}T_2^{r/s},
\end{equation}
where
$$T_1=\frac{J_{s+r}(X/M^b)}{X^{2s+2r-\kap}}\quad \text{and}\quad T_2=\frac{I_{b,kb}(X)}{X^{2s+2r-\kap}}.$$
But
\begin{equation}\label{7.2}
T_1\ll (M^{-b})^{2s+2r-\kap}(X/M^b)^{\eta_{s+r}+\del}.
\end{equation}
Writing $H=2(k-1)b$, we find that the hypotheses of the statement of the lemma guarantee that $kb+H=(3k-2)b<\tet^{-1}$. We therefore see from Lemma \ref{lemma4.3} that there exists an integer $h$ with $0\le h<H$ such that
$$T_2\ll \frac{M^{h(r-1)}K_{b,kb+h}(X)}{X^{2s+2r-\kap}}+\frac{M^{-(r+2)H/2}X^\del (X/M^b)^{\eta_{s+r}}}{(M^{kb})^{2s}(M^b)^{2r-\kap}}.$$
Fixing this value of $h$, we have
\begin{equation}\label{7.3}
T_2\ll (M^{-kb})^{2s}(M^{-b})^{2r-\kap}\Ome,
\end{equation}
where
$$\Ome =M^{-(2s-r+1)h}\llbracket K_{b,kb+h}(X)\rrbracket +M^{-(r+2)H/2}X^\del (X/M^b)^{\eta_{s+r}}.$$

\par On combining (\ref{7.1}), (\ref{7.2}) and (\ref{7.3}), we conclude that
$$\llbracket K_{a,b}(X)\rrbracket \ll M^{\ome (a,b)}(X/M^b)^{(1-r/s)(\eta_{s+r}+\del)}\Ome^{r/s},$$
where
\begin{align*}
\ome(a,b)=&\, 2sb+(2r-\kap )a+\tfrac{1}{2}r(r-1)(b+a)+r(kb-a)\\
&-(1-r/s)(2s+2r-\kap)b-(2skb+(2r-\kap)b)r/s.
\end{align*}
On recalling (\ref{2.6}), a brief computation reveals that
\begin{align*}
\ome(a,b)&=(\kap-rk+\tfrac{1}{2}r(r-1))b-(\kap-\tfrac{1}{2}r(r+1))a\\
&=\left(\frac{r(k-r)}{k-1}\right)(b-ka)=\left(\frac{r(k-r)}{k-1}\right)g.
\end{align*}
Consequently, on the one hand we have $\ome(a,b)\le rg$, and on the other
\begin{align*}
\ome(a,b)s/r-(r+1)H/2&\le \frac{s(k-r)b}{k-1}-(r+1)(k-1)b\\
&\le (kr-(r+1)(k-1))b=-(k-r-1)b\le 0.
\end{align*}
Thus we infer that
\begin{align*}
\llbracket K_{a,b}(X)\rrbracket \ll &\, (M^{-H/2})^{r/s}X^\del (X/M^b)^{\eta_{s+r}}\\
&\, +X^\del M^{rg-(2s-r+1)hr/s}(X/M^b)^{\eta_{s+r}(1-r/s)}\llbracket K_{b,kb+h}(X)\rrbracket^{r/s}.
\end{align*}
The conclusion of the lemma follows on observing that $\del$ may be assumed small enough that $X^\del\ll M^{rH/(6s)}$.
\end{proof}

\begin{lemma}\label{lemma7.2} Suppose that $1\le r\le \min\{k-2,\frac{1}{2}k+1\}$, define $s_0$ and $\kap$ as in (\ref{2.5}), and put $s=s_0$. Suppose that $a$ and $b$ are integers with $b\ge (r-1)a$ and $0\le a<b\le \frac{1}{3}(\rho \tet)^{-1}$, and put $H=2(\rho-1)b$ and $g=b-\rho a$. Suppose further that when $a=0$, one has $b=1$ or $2$. Then there exists an integer $h$, with $0\le h<H$, having the property that
\begin{align*}
\llbracket K_{a,b}(X)\rrbracket \ll &\, X^\del \left( M^{sg-(2s-r+1)h}\llbracket K_{b,\rho b+h}(X)\rrbracket \right)^{r/s}(X/M^b)^{\eta_{s+r}(1-r/s)}\\
&\, +M^{-rH/(3s)}(X/M^b)^{\eta_{s+r}}.
\end{align*}
\end{lemma}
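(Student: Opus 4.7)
The plan is to mirror the proof of Lemma \ref{lemma7.1} exactly, the only substantive change being that Lemma \ref{lemma6.3} replaces Lemma \ref{lemma6.2} at the outset. I first verify the hypotheses of Lemma \ref{lemma6.3}: with $s=r\rho=r(k-r+1)$ and $\rho\ge 3$ (forced by $r\le k-2$), one has $s\ge 3r$, which is at least $\max\{k+1-r,2r\}$ throughout the permitted range; the relation $\kap=s+r-(r-1)/(k-r)\le s+r$ is immediate; and the case $a=0$ with $b\in\{1,2\}$ is built into the hypotheses. Lemma \ref{lemma6.3} then delivers
$$K_{a,b}(X)\ll M^{(r-1)a}(M^{\rho b-a})^r(J_{s+r}(X/M^b))^{1-r/s}(I_{b,\rho b}(X))^{r/s}.$$

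Dividing through by the normalising factor from (\ref{2.17}), I would estimate $J_{s+r}(X/M^b)$ using (\ref{2.2}) combined with (\ref{2.8}), then invoke Lemma \ref{lemma4.3} with $(a,b)$ replaced by $(b,\rho b)$, so that now $H=2(\rho-1)b=2(k-r)b$. The hypothesis $b\le (3\rho\tet)^{-1}$ gives $\rho b+H=(3\rho-2)b\le \tet^{-1}$, and $s\ge 3r$ has already been verified. This produces some $h\in[0,H)$ together with the requisite splitting of $I_{b,\rho b}(X)$ into a $K_{b,\rho b+h}(X)$ contribution and a standard error term. Combining everything in the manner of the proof of Lemma \ref{lemma7.1} yields
$$\llbracket K_{a,b}(X)\rrbracket\ll M^{\ome(a,b)}(X/M^b)^{(1-r/s)(\eta_{s+r}+\del)}\Ome^{r/s},$$
where $\Ome$ packages the $K_{b,\rho b+h}(X)$ term and the error, and
\begin{align*}
\ome(a,b)=&\,2sb+(2r-\kap)a+(r-1)a+r(\rho b-a)\\
&-(1-r/s)(2s+2r-\kap)b-(2s\rho b+(2r-\kap)b)r/s.
\end{align*}

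The computational heart of the proof is the simplification of $\ome(a,b)$. Collecting coefficients of $a$ and $b$ separately gives $\ome(a,b)=(2r-\kap-1)a+(\kap-r\rho)b$, and substituting $\kap=r\rho+r-(r-1)/(k-r)$ should collapse this, after elementary manipulation, to
$$\ome(a,b)=\Bigl(r-\frac{r-1}{k-r}\Bigr)(b-\rho a)=\Bigl(r-\frac{r-1}{k-r}\Bigr)g.$$
Since $r-(r-1)/(k-r)\le r$, one has $\ome(a,b)\le rg$, handling the first alternative in $\Ome$. For the second alternative, I need $\ome(a,b)s/r-(r+1)H/2\le 0$, which, on using $g\le b$, reduces to $(k-r)(2r-k)\le (r-1)(k-r+1)$. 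This is exactly where the hypothesis $r\le \tfrac{1}{2}k+1$ is essential: it forces $2r-k\le 2$, and a short case check ($2r\le k$, $2r=k+1$, $2r=k+2$) confirms the inequality throughout. The usual absorption $X^\del\ll M^{rH/(6s)}$ then converts the residual $M^{-Hr/(2s)}X^\del(X/M^b)^{\eta_{s+r}}$ into the claimed error $M^{-rH/(3s)}(X/M^b)^{\eta_{s+r}}$.

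The main obstacle is the algebraic simplification in the middle step: the expression for $\ome(a,b)$ has eight terms entangling $s$, $\rho$ and $\kap$, and it is not visually obvious a priori that everything reassembles into a constant multiple of $g=b-\rho a$. Once that identity is in hand the two required inequalities are routine, and every other part of the argument tracks the template of Lemma \ref{lemma7.1} almost verbatim, with $\rho b$ in the role previously played by $kb$ and $M^{(r-1)a}$ replacing the factor $M^{r(r-1)(a+b)/2}$ from Lemma \ref{lemma6.2}.
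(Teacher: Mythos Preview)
Your proof is correct and follows the paper's approach essentially verbatim. Two small slips are worth flagging. First, your verification that $s\ge\max\{k+1-r,2r\}$ via ``$s\ge 3r$, which is at least $\max\{k+1-r,2r\}$'' does not quite work, since $3r\ge k+1-r$ fails for small $r$ (take $r=1$, $k=10$); the fix is immediate, since $s=r\rho\ge\rho=k+1-r$ directly. Second, your claim that the hypothesis $r\le\tfrac12 k+1$ is ``essential'' for the inequality $\ome(a,b)s/r-(r+1)H/2\le 0$ is a misattribution: in fact your reduced inequality $(k-r)(2r-k)\le(r-1)(k-r+1)$ is equivalent to $(k-r)(k-r-1)\ge 1-r$, which holds for all $1\le r\le k-2$, and the paper obtains it cleanly as $-(k-r-1)b\le 0$ without invoking $r\le\tfrac12 k+1$. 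That extra hypothesis is used only later, in Lemma~\ref{lemma9.2}, to ensure $b_n\ge(r-1)a_n$ persists through the iteration.
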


\begin{proof} We now assume that $k\ge 3$ and $1\le r\le \min\{k-2,\frac{1}{2}k+1\}$, so we have
$$s=r\rho=r(k+1-r)\ge \max\{3r,k-r+1\}.$$
Next, we see from (\ref{2.5}) that
$$\kap=s_0+r-\frac{r-1}{k-r}\le s+r.$$
On recalling (\ref{2.17}), we may therefore apply Lemma \ref{lemma6.3} to deduce that
\begin{equation}\label{7.4}
\llbracket K_{a,b}(X)\rrbracket \ll (M^b)^{2s}(M^a)^{2r-\kap}M^{(r-1)a}(M^{\rho b-a})^rT_1^{1-r/s}T_2^{r/s},
\end{equation}
where in this instance
$$T_1=\frac{J_{s+r}(X/M^b)}{X^{2s+2r-\kap}}\quad \text{and}\quad T_2=\frac{I_{b,\rho b}(X)}{X^{2s+2r-\kap}}.$$
Writing $H=2(\rho-1)b$, the hypotheses of the statement of the lemma imply that $\rho b+H\le (3\rho-2)b<\tet^{-1}$. Thus we deduce from Lemma \ref{lemma4.3} that there exists an integer $h$ with $0\le h<H$ such that
$$T_2\ll \frac{M^{h(r-1)}K_{b,\rho b+h}(X)}{X^{2s+2r-\kap}}+\frac{M^{-(r+2)H/2}X^\del (X/M^b)^{\eta_{s+r}}}{(M^{\rho b})^{2s}(M^b)^{2r-\kap}}.$$
Fixing this value of $h$, we see that
\begin{equation}\label{7.5}
T_2\ll (M^{-\rho b})^{2s}(M^{-b})^{2r-\kap}\Ome ,
\end{equation}
where
$$\Ome =M^{-(2s-r+1)h}\llbracket K_{b,\rho b+h}(X)\rrbracket +M^{-(r+2)H/2}X^\del (X/M^b)^{\eta_{s+r}}.$$

\par On combining (\ref{7.4}) and (\ref{7.5}) with the estimate (\ref{7.2}), still valid in the present setting, we reach the upper bound
$$\llbracket K_{a,b}(X)\rrbracket \ll M^{\ome (a,b)}(X/M^b)^{(1-r/s)(\eta_{s+r}+\del)}\Ome^{r/s},$$
where
\begin{align*}
\ome(a,b)=&\, 2sb+(2r-\kap)a+(r-1)a+r(\rho b-a)\\
&-(1-r/s)(2s+2r-\kap)b-(2s\rho b+(2r-\kap)b)r/s.
\end{align*}
We next recall (\ref{2.5}), and hence deduce that
$$\ome (a,b)=(\kap -r\rho)b+(2r-\kap -1)a=\left( r-\frac{r-1}{\rho-1}\right) (b-\rho a).$$
Consequently, on the one hand we have $\ome(a,b)\le rg$, and on the other
\begin{align*}
\ome(a,b)s/r-(r+1)H/2&\le \rho\Bigl( r-\frac{r-1}{\rho-1}\Bigr) b-(r+1)(\rho-1)b\\
&\le (\rho r-r+1)b-(\rho r+\rho -r-1)b\\
&=-(k-r-1)b\le 0.
\end{align*}
Thus we conclude that
\begin{align*}
\llbracket K_{a,b}(X)\rrbracket \ll &\, (M^{-H/2})^{r/s}X^\del (X/M^b)^{\eta_{s+r}}\\
&+X^\del M^{rg-(2s-r+1)hr/s}(X/M^b)^{\eta_{s+r}(1-r/s)}\llbracket K_{b,\rho b+h}(X)\rrbracket^{r/s}.
\end{align*}
Just as in the conclusion of the proof of the previous lemma, our argument is completed by noting the estimate $X^\del\ll M^{rH/(6s)}$.
\end{proof}

\section{The iterative process, I: the basic estimate} Making use of Lemma \ref{lemma6.1}, and then applying either Lemma \ref{lemma7.1} repeatedly, or else Lemma \ref{lemma7.2} repeatedly, we are able to bound $J_{s+r}(X)$ in terms of quantities of the shape $K_{c,d}(X)$, wherein $c$ and $d$ pass through an increasing sequence of integral values. Our goal in this section is to control this iterative process so as to establish Theorems \ref{theorem1.1} and \ref{theorem1.4}. Although we model this treatment on the analogous analysis of \cite[\S7]{Woo2011a}, there are complications in the details that generate some complexity.

\begin{lemma}\label{lemma8.1} Define $s_0$ and $\kap$ as in (\ref{2.6}), and put $s=s_0$. Let $a$ and $b$ be integers with $0\le a<b\le \frac{1}{3}(k\tet)^{-1}$ having the property that when $a=0$, one has $b=1$ or $2$, and put $g=b-ka$. Suppose in addition that there exist non-negative numbers $\psi$, $c$ and $\gam$, with $c\le 3(s/r)^N$, for which
\begin{equation}\label{8.1}
X^{\eta_{s+r}(1+\psi \tet)}\ll X^{c\del}M^{-\gam}\llbracket K_{a,b}(X)\rrbracket .
\end{equation}
Then, for some non-negative integer $h$ with $h\le 2(k-1)b$, one has
$$X^{\eta_{s+r}(1+\psi'\tet)}\ll X^{c'\del}M^{-\gam'}\llbracket K_{a',b'}(X)\rrbracket ,$$
where
$$\psi'=(s/r)\psi +(s/r-1)b,\quad c'=(s/r)(c+1),$$
$$a'=b,\quad b'=kb+h,\quad \gam'=(s/r)\gam+(2s-r+1)h-sg.$$
\end{lemma}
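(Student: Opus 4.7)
The plan is to push the lower bound (\ref{8.1}) through the dichotomy supplied by Lemma \ref{lemma7.1}. First I would apply Lemma \ref{lemma7.1} with the given pair $(a,b)$, which produces an integer $h$ with $0\le h<H=2(k-1)b$ satisfying
$$\llbracket K_{a,b}(X)\rrbracket \ll X^\del \bigl(M^{sg-(2s-r+1)h}\llbracket K_{b,kb+h}(X)\rrbracket\bigr)^{r/s}(X/M^b)^{\eta_{s+r}(1-r/s)}+M^{-rH/(3s)}(X/M^b)^{\eta_{s+r}}.$$
The rough strategy is then to show that the second (loss) term cannot dominate, so that the first term combined with (\ref{8.1}) yields the desired conclusion with $a'=b$ and $b'=kb+h$.

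For the discarding step, suppose for contradiction that the second term dominated. Then combining with (\ref{8.1}) would yield
$$X^{\eta_{s+r}(1+\psi\tet)-c\del}M^\gam\ll M^{-rH/(3s)}(X/M^b)^{\eta_{s+r}},$$
whence, using $M=X^\tet$ and comparing leading exponents,
$$\tet\bigl(\gam+b\eta_{s+r}+rH/(3s)+\psi\eta_{s+r}\bigr)\le c\del.$$
Dropping non-negative terms would force $\tet\cdot rH/(3s)\le c\del$, hence $c\del/\tet\ge 2r(k-1)b/(3s)$. However, the hypothesis $c\le 3(s/r)^N$ together with the choices $\del<(Ns)^{-3N}$ and $\tet=N^{-1/2}(r/s)^{N+2}$ from (\ref{2.9}) forces $c\del/\tet$ to be exponentially small in $N$, so this is false for $N$ sufficiently large. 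We therefore keep only the first term.

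Finally, substituting the resulting bound into (\ref{8.1}) gives
$$X^{\eta_{s+r}(1+\psi\tet)}\ll X^{(c+1)\del}M^{-\gam}\bigl(M^{sg-(2s-r+1)h}\llbracket K_{b,kb+h}(X)\rrbracket\bigr)^{r/s}(X/M^b)^{\eta_{s+r}(1-r/s)}.$$
Raising to the $s/r$ power, isolating $\llbracket K_{b,kb+h}(X)\rrbracket$, and using $M=X^\tet$ to convert the $(X/M^b)^{-\eta_{s+r}(s/r-1)}$ factor into an $X$-exponent contribution of the form $-b\tet\eta_{s+r}(s/r-1)$ absorbed into the $\psi'\tet$ exponent on the left, I would read off
$$\psi'=(s/r)\psi+(s/r-1)b,\quad c'=(s/r)(c+1),\quad \gam'=(s/r)\gam+(2s-r+1)h-sg,$$
which is precisely the claim. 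The only non-routine point in the argument is the magnitude comparison $c\del\ll\tet$ needed to eliminate the loss term; everything else is bookkeeping with exponents.
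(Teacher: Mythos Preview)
Your proposal is correct and follows essentially the same route as the paper. The paper phrases the elimination of the loss term slightly differently---it first records $X^{c\del}<M^{1/(6s)}$ and $M^{1/(6s)}>X^\del$ from the numerical constraints on $c$, $\del$, $\tet$, and then bounds the contribution of the second term in Lemma~\ref{lemma7.1} by $X^{\eta_{s+r}-\del}$, which is strictly smaller than the left side of (\ref{8.1})---but this is the same comparison you make, just packaged differently.
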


\begin{proof} Since we may suppose that $c\le 3(s/r)^N$ and $\del<(Ns)^{-3N}$, we have $c\del <\tet/(6s)$, and hence $X^{c\del}<M^{1/(6s)}$. In addition, one has $M^{1/(6s)}>X^\del$. We therefore deduce from Lemma \ref{lemma7.1} that there exists an integer $h$ with $0\le h<2(k-1)b$ with the property that
\begin{align*}
\llbracket K_{a,b}(X)\rrbracket \ll &\, X^\del (X/M^b)^{(1-r/s)\eta_{s+r}}\left( M^{sg-(2s-r+1)h}\llbracket K_{b,kb+h}(X)\rrbracket\right)^{r/s}\\
&\, +M^{-r/(3s)}X^{\eta_{s+r}}.
\end{align*}
We are therefore led from the hypothesised bound (\ref{8.1}) to the estimate
\begin{align*}
X^{\eta_{s+r}(1+\psi \tet)}\ll &\, X^{(c+1)\del}M^{-\gam +rg-(2s-r+1)rh/s}(X/M^b)^{(1-r/s)\eta_{s+r}}\llbracket K_{b,kb+h}(X)\rrbracket^{r/s}\\
&\, +X^{\eta_{s+r}-\del},
\end{align*}
whence
$$X^{\eta_{s+r}(r/s+(\psi+(1-r/s)b)\tet)}\ll X^{(c+1)\del}M^{-\gam+rg-(2s-r+1)rh/s}\llbracket K_{b,kb+h}(X)\rrbracket^{r/s}.$$
The conclusion of the lemma follows on raising left and right hand sides in the last inequality to the power $s/r$.
\end{proof}

\begin{lemma}\label{lemma8.2} Define $s_0$ and $\kap$ as in (\ref{2.6}), and put $s=s_0$. Then $\eta_{s+r}=0$.
\end{lemma}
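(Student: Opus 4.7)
The plan is to argue by contradiction: suppose $\eta_{s+r}>0$, and derive a contradiction by showing that $\eta_{s+r}$ must in fact be arbitrarily small. First apply Lemma \ref{lemma6.1} to fix the prime $p$ and produce an integer $h\in\{0,1\}$ with $J_{s+r}(X)\ll M^{2s+h(r-1)}\Ktil_{1+h}(X)$. Combining this with (\ref{2.16})--(\ref{2.18}), and using the convention that $K_{0,1+h}(X)=\Ktil_{1+h}(X)$, one obtains
$$X^{\eta_{s+r}}\ll X^\del M^{-h(2s-r+1)}\llbracket K_{0,1+h}(X)\rrbracket ,$$
which is precisely the hypothesis (\ref{8.1}) of Lemma \ref{lemma8.1} with initial data $(a_0,b_0)=(0,1+h)$, $\psi_0=0$, $c_0=1$ and $\gam_0=h(2s-r+1)$.

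Next, iterate Lemma \ref{lemma8.1}. Each application produces an integer $h_n\in[0,2(k-1)b_{n-1})$ and updates the five-tuple via $a_n=b_{n-1}$, $b_n=kb_{n-1}+h_n$, $\psi_n=k\psi_{n-1}+(k-1)b_{n-1}$, $c_n=k(c_{n-1}+1)$ and $\gam_n=k\gam_{n-1}+(2s-r+1)h_n-sg_{n-1}$, where $g_0=b_0$ and $g_n=h_n$ for $n\ge 1$. The hypothesis $c_n\le 3(s/r)^N$ persists for $n\le N$, and since $b_n<(3k-2)^nb_0$ while $(3k\tet)^{-1}=N^{1/2}k^{N+1}/3$, the hypothesis $b_n\le (3k\tet)^{-1}$ remains valid for at least $n_0\ge cN-O(1)$ iterations, where $c=\log k/\log(3k-2)>0$. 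In particular $n_0\to\infty$ as $N\to\infty$. At step $n_0$ we have
$$X^{\eta_{s+r}(1+\psi_{n_0}\tet)}\ll X^{c_{n_0}\del}M^{-\gam_{n_0}}\llbracket K_{a_{n_0},b_{n_0}}(X)\rrbracket .$$

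Now invoke Lemma \ref{lemma5.3} to absorb the remaining $K$-factor: $\llbracket K_{a_{n_0},b_{n_0}}\rrbracket\ll X^{\eta_{s+r}+\del}(M^{b_{n_0}-a_{n_0}})^\kap$. Substituting and taking logarithms yields
$$\eta_{s+r}\psi_{n_0}\tet\le (c_{n_0}+1)\del+\tet\bigl[\kap(b_{n_0}-a_{n_0})-\gam_{n_0}\bigr].$$
Solving the recursions explicitly produces the identity
$$\kap(b_n-a_n)-\gam_n=k^{n-1}\bigl[(\kap(k-1)+s)b_0-k\gam_0\bigr]+\alp\sum_{m=1}^{n-1}k^{n-1-m}h_m+\bet h_n,$$
where $\alp=\kap(k-1)-[(2s-r+1)k-s]$ and $\bet=\kap-(2s-r+1)$. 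A direct computation using the specific value of $\kap$ in (\ref{2.6}) and the identity $s=rk$ shows that $\alp\le 0$ and $\bet\le 0$, so the $h_m$-contributions only help us, and $\kap(b_{n_0}-a_{n_0})-\gam_{n_0}\le Ck^{n_0}$ for a constant $C$ depending only on $k$, $r$ and the initial $(b_0,\gam_0)$. On the other hand $\psi_{n_0}\ge n_0(k-1)k^{n_0-1}b_0$, so dividing gives
$$\eta_{s+r}\le\frac{(c_{n_0}+1)\del}{\psi_{n_0}\tet}+\frac{Ck^{n_0}}{n_0(k-1)k^{n_0-1}b_0}=O\!\left(\frac{\del}{\tet}\right)+O\!\left(\frac{1}{n_0}\right),$$
and both terms tend to zero as $N\to\infty$, forcing $\eta_{s+r}=0$.

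The principal obstacle is the algebraic check that $\alp\le 0$ and $\bet\le 0$: these inequalities hinge delicately on the shape of $\kap$ prescribed by (\ref{2.6}), and would fail for a naive choice. A secondary technical point is confirming that $n_0$ grows linearly with $N$ even in the worst case where each $h_n$ attains its maximum value $2(k-1)b_{n-1}-1$, but this follows from a direct comparison of the iterated growth $(3k-2)^n$ with the threshold $\tet^{-1}\asymp N^{1/2}k^N$.
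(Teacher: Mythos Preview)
Your argument is correct and reaches the same conclusion, but the route differs from the paper's in an interesting way. The paper maintains tight control on the sequence $(b_n)$ throughout the iteration: at each step it shows that the integer $h$ supplied by Lemma~\ref{lemma8.1} must satisfy $b_{n+1}=kb_n+h<\sqrt{N}\,k^{n+1}$, for otherwise the lower bound (\ref{8.9}) on $\gam_{n+1}$ combines with Lemma~\ref{lemma5.3} to force $\eta_{s+r}<0$ immediately. This keeps $b_n$ close to $k^n$ and permits a full $N$ iterations. You instead accept whatever $h_n$ Lemma~\ref{lemma8.1} provides, use only the crude bound $b_n\le (3k-2)^nb_0$, and iterate $n_0\sim (\log k/\log(3k-2))N$ times. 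The loss in the number of steps is compensated by your explicit identity for $\kap(b_n-a_n)-\gam_n$, together with the sign checks $\alp\le 0$ and $\bet\le 0$, which show that large values of $h_m$ can only help. Your packaging is arguably cleaner, avoiding the per-step dichotomy; the paper's version gives a sharper quantitative bound $\eta_{s+r}\ll k^5/\sqrt{N}$, though this extra precision is not needed for the qualitative conclusion $\eta_{s+r}=0$.

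One small point to tighten: Lemma~\ref{lemma8.1} as stated asks for $\gam\ge 0$, and with your (correct) choice $g_0=b_0$ one can have $\gam_1=(2s-r+1)h_1-sb_0<0$ when $h=h_1=0$. Inspection of the proof of Lemma~\ref{lemma8.1} shows that the non-negativity of $\gam$ is never actually used, so the iteration goes through regardless; you should note this explicitly. Your identity for $\kap(b_n-a_n)-\gam_n$ and the verifications $\alp,\bet\le 0$ are accurate.
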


\begin{proof} We begin by recalling our convention concerning the value of $K_{0,b}(X)$ from the preamble to Lemma \ref{lemma6.2}. Thus, as a consequence of Lemma \ref{lemma6.1}, it follows from (\ref{2.16}) and (\ref{2.17}) that there exists an integer $h$ with $h\in\{0,1\}$ such that
$$\llbracket J_{s+r}(X)\rrbracket \ll M^{-(2s-r+1)h}\llbracket K_{0,1+h}(X)\rrbracket .$$
We therefore deduce from (\ref{2.18}) that, with $h=0$ or $1$, one has
\begin{equation}\label{8.2}
X^{\eta_{s+r}}\ll X^\del \llbracket J_{s+r}(X)\rrbracket \ll X^\del M^{-(2s-r+1)h}\llbracket K_{0,1+h}(X)\rrbracket .
\end{equation}

We may suppose that $\eta_{s+r}>0$, for otherwise there is nothing to prove. We next take $h_{-1}$ to be the integer $h$ for which the relation (\ref{8.2}) holds, and we define three sequences $(a_n)$, $(b_n)$, $(h_n)$ of non-negative integers for $0\le n\le N$ as follows. We put $a_0=0$ and $b_0=1+h_{-1}$. Then, when $0\le n<N$, we fix any integer $h_n$ with $0\le h_n\le 2(k-1)b_n$, and then define
\begin{equation}\label{8.3}
a_{n+1}=b_n\quad \text{and}\quad b_{n+1}=kb_n+h_n.
\end{equation}
Next we define the auxiliary sequences $(\psi_n)$, $(c_n)$, $(\gam_n)$ of non-negative real numbers for $0\le n\le N$ by putting $\psi_0=0$, $c_0=1$, $\gam_0=(2s-r+1)h_{-1}$. Then, for $0\le n<N$, we define
\begin{align}
\psi_{n+1}&=(s/r)\psi_n+(s/r-1)b_n,\label{8.4}\\
c_{n+1}&=(s/r)(c_n+1),\label{8.5}\\
\gam_{n+1}&=(s/r)\gam_n+(2s-r+1)h_n-sh_{n-1}.\label{8.6}
\end{align}
We note that a straightforward induction reveals $\gam_n$ to be non-negative for $n\ge 0$, for the relation (\ref{8.6}) yields the recurrence formula
\begin{align*}
\gam_{n+1}-(2s-r+1)h_n&=(s/r)(\gam_n-rh_{n-1})\\
&\ge (s/r)(\gam_n-(2s-r+1)h_{n-1}).
\end{align*}
On recalling that $s/r=k$, we therefore see that for $n\ge 1$ one has
\begin{align*}
\gam_n&\ge (2s-r+1)h_{n-1}+k^n(\gam_0-(2s-r+1)h_{-1})\\
&\ge (2s-r+1)h_{n-1}\ge 0,
\end{align*}
so that $\gam_n$ is indeed non-negative. A second induction confirms that for $0\le n\le N$, one has
$$c_n=\frac{2s-r}{s-r}\left( \frac{s}{r}\right)^n-\frac{s}{s-r}\le \left( 2+\frac{1}{k-1}\right)\left(\frac{s}{r}\right)^n\le 3(s/r)^n.$$

\par We claim that a choice may be made for the sequence $(h_n)$ in such a manner that for $0\le n\le N$, one has
\begin{equation}\label{8.7}
b_n<\sqrt{N}(s/r)^n
\end{equation}
and
\begin{equation}\label{8.8}
X^{\eta_{s+r}(1+\psi_n\tet)}\ll X^{c_n\del}M^{-\gam_n}\llbracket K_{a_n,b_n}(X)\rrbracket .
\end{equation}
When $n=0$, the relation (\ref{8.7}) holds by the definition of $b_0$. On the other hand, when $n=0$, the relation (\ref{8.8}) holds as a consequence of (\ref{8.2}). We initiate further analysis of larger indices $n$ with a preliminary discussion of the recurrence relations (\ref{8.3}) to (\ref{8.6}). Recall that $s=rk$, and observe that when $m\ge 1$, one has
$$\gam_{m+1}-(s/r)\gam_m=(2s-r+1)(b_{m+1}-kb_m)-s(b_m-kb_{m-1}),$$
whence
$$\gam_{m+1}-(2s-r+1)b_{m+1}+sb_m=k(\gam_m-(2s-r+1)b_m+sb_{m-1}).$$
It therefore follows by induction that for $m\ge 1$ one has
$$\gam_m\ge (2s-r+1)b_m-sb_{m-1}+k^{m-1}(\gam_1-(2s-r+1)b_1+sb_0).$$
We recall further that $b_0=1+h_{-1}$, $b_1=kb_0+h_0$, and so
\begin{align*}
\gam_1-(2s-r+1)b_1+sb_0=&\, (k\gam_0+(2s-r+1)h_0-sh_{-1})\\
&\, -(2s-r+1)(kb_0+h_0)+sb_0.\end{align*}
On recalling again the relation $s=rk$, we arrive at the formula
\begin{align*}
\gam_1-(2s-r+1)b_1+sb_0&=k(\gam_0-(2s-r+1)b_0)+s(b_0-h_{-1})\\
&=s-rk-k(2s-2r+1),
\end{align*}
and this in turn delivers the lower bound
\begin{equation}\label{8.9}
\gam_m\ge (2s-r+1)b_m-sb_{m-1}-(2s-2r+1)k^m.
\end{equation}

\par Suppose now that the desired conclusions (\ref{8.7}) and (\ref{8.8}) have been established for the index $n<N$. Then from (\ref{8.7}), one has $kb_n\tet <k(s/r)^{n-N-2}<\frac{1}{3}$, whence $b_n<\frac{1}{3}(k\tet)^{-1}$. We may therefore appeal to Lemma \ref{lemma8.1} to deduce from (\ref{8.8}) that there exists a non-negative integer $h$, with $h\le 2(k-1)b_n$, for which one has the upper bound
\begin{equation}\label{8.10}
X^{\eta_{s+r}(1+\psi'\tet)}\ll X^{c'\del}M^{-\gam'}\llbracket K_{a',b'}(X)\rrbracket ,
\end{equation}
where
\begin{align}
a'&=b_n=a_{n+1},\quad b'=kb_n+h,\label{8.11}\\
\psi'&=(s/r)\psi_n+(s/r-1)b_n=\psi_{n+1},\label{8.12}\\
c'&=(s/r)(c_n+1)=c_{n+1},\label{8.13}\\
\gam'&=(s/r)\gam_n+(2s-r+1)h-sh_{n-1}.\label{8.14}
\end{align}
Notice here that in the final relation (\ref{8.14}), we have made use of the formula $b_n-ka_n=b_n-kb_{n-1}=h_{n-1}$ available via (\ref{8.3}).\par

Suppose, if possible, that $b'\ge \sqrt{N}(s/r)^{n+1}=\sqrt{N}k^{n+1}$. The relations (\ref{8.11}) and (\ref{8.14}) together with (\ref{8.9}) show that
\begin{align}
\gam'&=(s/r)\gam_n+(2s-r+1)(b'-kb_n)-s(b_n-kb_{n-1})\notag \\
&=k(\gam_n-(2s-r+1)b_n+sb_{n-1})+(2s-r+1)b'-sb_n\notag \\
&\ge -(2s-2r+1)k^{n+1}+(2s-2r+1)b'+r(b'-kb_n)\notag \\
&\ge (2s-2r+1)(b'-k^{n+1})\ge (1-1/\sqrt{N})(2s-2r+1)b'.\label{8.15}
\end{align}
But $b'=kb_n+h\le (3k-2)b_n<\tet^{-1}$, and so it follows from Lemma \ref{lemma5.3} that
\begin{equation}\label{8.16}
\llbracket K_{a',b'}(X)\rrbracket \ll X^{\eta_{s+r}+\del}(M^{b'})^\kap .
\end{equation}
Combining (\ref{8.15}), (\ref{8.16}) and (\ref{8.10}), therefore, we obtain the bound
\begin{equation}\label{8.17}
X^{\eta_{s+r}(1+\psi_{n+1}\tet)}\ll X^{\eta_{s+r}+(c_{n+1}+1)\del }(M^{b'})^{\kap -(2s-2r+1)(1-1/\sqrt{N})}.
\end{equation}
We now recall that $c_{n+1}\le 3(s/r)^{n+1}$, so that $X^{(c_{n+1}+1)\del}<M^{1/2}$. Also, when $r\ge 1$ and $k\ge 3$ one has
\begin{align*}
\kap-&(1-1/\sqrt{N})(2s-2r+1)\\
&\le (rk-\tfrac{1}{2}r(r+1))\left( \frac{k+1}{k-1}\right) -2rk+2r-1+2s/\sqrt{N}\\
&\le (rk-r)\left( \frac{k+1}{k-1}\right) +(2-2k)r-\tfrac{1}{2}\\
&=r(k+1)+(2-2k)r-\tfrac{1}{2}=(3-k)r-\tfrac{1}{2}\le -\tfrac{1}{2}.
\end{align*}
Thus we obtain
\begin{equation}\label{8.18}
X^{\eta_{s+r}(1+\psi_{n+1}\tet)}\ll X^{\eta_{s+r}}M^{(1-b')/2}\ll X^{\eta_{s+r}}M^{-1/2}.
\end{equation}
Since $\psi_{n+1}$ and $\tet$ are both positive, we are forced to conclude that $\eta_{s+r}<0$, contradicting our opening hypothesis. The assumption that $b'\ge \sqrt{N}(s/r)^{n+1}$ is therefore untenable, and so we must in 
fact have $b'<\sqrt{N}(s/r)^{n+1}$. We take $h_n$ to be the integer $h$ at hand, so that $b'=b_{n+1}$ and $\gam'=\gam_{n+1}$, and thereby we obtain the desired conclusion that (\ref{8.7}) and (\ref{8.8}) hold with 
$n$ replaced by $n+1$. This completes the present inductive step.\par

We have confirmed the validity of (\ref{8.7}) and (\ref{8.8}) for $0\le n\le N$. We have also the bounds $c_n\le 3(s/r)^n$, $\gam_n\ge 0$ and $b_n\ge k^n$. Furthermore, since $s=rk$ one finds that
$$\psi_{n+1}=k\psi_n+(k-1)b_n\ge k\psi_n+(k-1)k^n,$$
whence $\psi_n\ge n(k-1)k^{n-1}$. Finally, one has $b_N\tet<(r/s)^2<1$, so that $b_N<\tet^{-1}$. An application of Lemma \ref{lemma5.3} in combination with (\ref{8.8}) therefore delivers the estimate
$$X^{\eta_{s+r}(1+\psi_N\tet)}\ll X^{\eta_{s+r}+(c_N+1)\del}(M^{b_N})^\kap \ll X^{\eta_{s+r}+k^2}.$$
Again making use of the relation $\tet=N^{-1/2}(r/s)^{N+2}$ recorded in (\ref{2.9}), we thus obtain the estimate
$$\eta_{s+r}\le \frac{k^2}{\psi_N\tet}\le \frac{\sqrt{N}k^2(s/r)^{N+2}}{N(k-1)k^{N-1}}<\frac{k^5}{\sqrt{N}}.$$
We are at liberty to take $N$ as large as we please in terms of $k$, and thus $\eta_{s+r}$ can be made arbitrarily small. It follows that $\eta_{s+r}=0$, and this completes the proof of the lemma.
\end{proof}

The conclusion of Theorem \ref{theorem1.4} is an immediate consequence of Lemma \ref{lemma8.2}. The latter shows that when $s\ge r(k+1)$, one has
$$J_s(X)\ll X^{2s-\kap+\eps},$$
where
$$\kap=(rk-\tfrac{1}{2}r(r+1))\left( \frac{k+1}{k-1}\right) .$$
Write $t=k-r$. Then this estimate may be rewritten to state that when $s\ge (k+1)(k-t)$, one has
$$J_s(X)\ll X^{2s-\frac{1}{2}k(k+1)+\Del_s+\eps},$$
where
\begin{align*}
\Del_s&=\tfrac{1}{2}k(k+1)-\left((k-t)k-\tfrac{1}{2}(k-t)(k-t+1)\right)\left( \frac{k+1}{k-1}\right)\\
&=\tfrac{1}{2}t(t-1)\left( \frac{k+1}{k-1}\right) .
\end{align*}
This completes the proof of Theorem \ref{theorem1.4} for $1\le t\le k-1$. The special case in which $t=1$ delivers the exponent $\Del_s=0$, so that when $s\ge k^2-1$ one has
$$J_s(X)\ll X^{2s-\frac{1}{2}k(k+1)+\eps}.$$
The conclusion of Theorem \ref{theorem1.1} therefore follows as a speical case of Theorem \ref{theorem1.4}.

\section{The iterative process, II: quasi-diagonal behaviour} Our handling of the iterative process must be modified in order to establish Theorem \ref{theorem1.2}, though the strategy is very similar to that underlying the proof of Theorem \ref{theorem1.4}. There are sufficiently many differences from the treatment presented in \S8 that, in the interests of enhancing clarity, we provide a fairly complete account in this section.

\begin{lemma}\label{lemma9.1} Suppose that $1\le r\le \min\{k-2,\frac{1}{2}k+1\}$, define $s_0$ and $\kap$ as in (\ref{2.5}), and put $s=s_0$. Let $a$ and $b$ be integers with $b\ge (r-1)a$ and $0\le a<b\le \frac{1}{3}(\rho \tet)^{-1}$ having the property that when $a=0$, one has $b=1$ or $2$, and put $g=b-\rho a$. Suppose in addition that there exist non-negative numbers $\psi$, $c$ and $\gam$, with $c\le 3(s/\rho)^N$, for which
\begin{equation}\label{9.1}
X^{\eta_{s+r}(1+\psi \tet)}\ll X^{c\del}M^{-\gam}\llbracket K_{a,b}(X)\rrbracket .
\end{equation}
Then, for some non-negative integer $h$ with $h\le 2(\rho-1)b$, one has
$$X^{\eta_{s+r}(1+\psi'\tet)}\ll X^{c'\del}M^{-\gam'}\llbracket K_{a',b'}(X)\rrbracket ,$$
where
$$\psi'=(s/r)\psi +(s/r-1)b,\quad c'=(s/r)(c+1),$$
$$a'=b,\quad b'=\rho b+h,\quad \gam'=(s/r)\gam+(2s-r+1)h-sg.$$
\end{lemma}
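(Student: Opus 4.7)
The plan is to mirror the proof of Lemma \ref{lemma8.1} exactly, substituting Lemma \ref{lemma7.2} for Lemma \ref{lemma7.1} at the one essential step. The two lemmata play parallel roles: Lemma \ref{lemma7.1} governs congruencing with full lifting (modulus $p^{kb}$), while Lemma \ref{lemma7.2} governs congruencing with intermediate lifting (modulus $p^{\rho b}$), which is the variant needed in the quasi-diagonal regime relevant to Theorem \ref{theorem1.2}. All the hypotheses required to invoke Lemma \ref{lemma7.2} are explicitly supplied by the present statement: $b \ge (r-1)a$, $b \le \frac{1}{3}(\rho\tet)^{-1}$, and the special condition on $(a,b)$ when $a = 0$.

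First I would dispose of the technical bookkeeping as in Lemma \ref{lemma8.1}: since $c \le 3(s/\rho)^N$ and $\del < (Ns)^{-3N}$, using $\tet = N^{-1/2}(r/s)^{N+2}$ one has $c\del < \tet/(6s)$, hence $X^{c\del} < M^{1/(6s)}$ and in particular $X^\del \ll M^{1/(6s)} \ll X^{c\del}$. Next I invoke Lemma \ref{lemma7.2} with $H = 2(\rho-1)b$ to produce an integer $h$ with $0 \le h < H$ satisfying
\begin{equation*}
\llbracket K_{a,b}(X)\rrbracket \ll X^\del (X/M^b)^{(1-r/s)\eta_{s+r}}\bigl( M^{sg-(2s-r+1)h}\llbracket K_{b,\rho b+h}(X)\rrbracket\bigr)^{r/s} + M^{-rH/(3s)}X^{\eta_{s+r}}.
\end{equation*}
Multiplying by $X^{c\del}M^{-\gam}$ and invoking the hypothesis (\ref{9.1}), the second summand gives a contribution $\ll X^{\eta_{s+r}-\del}$ (absorbing the small factor $X^{(c+1)\del}$ into the negative power of $M$), which can be subsumed on the left. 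One is then left with
\begin{equation*}
X^{\eta_{s+r}(1+\psi\tet)} \ll X^{(c+1)\del}M^{-\gam+rg-(2s-r+1)rh/s}(X/M^b)^{(1-r/s)\eta_{s+r}}\llbracket K_{b,\rho b+h}(X)\rrbracket^{r/s}.
\end{equation*}

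Rearranging the factor $(X/M^b)^{(1-r/s)\eta_{s+r}}$ to the left yields
\begin{equation*}
X^{\eta_{s+r}(r/s + (\psi + (1-r/s)b)\tet)} \ll X^{(c+1)\del}M^{-\gam + rg - (2s-r+1)rh/s}\llbracket K_{b,\rho b+h}(X)\rrbracket^{r/s},
\end{equation*}
and raising both sides to the power $s/r$ gives the claimed estimate with $a' = b$, $b' = \rho b + h$, and the stated values of $\psi', c', \gam'$.

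The whole argument is essentially bookkeeping; I expect no real obstacle beyond confirming that the hypothesis $b \ge (r-1)a$, needed for Lemma \ref{lemma7.2} but absent from Lemma \ref{lemma8.1}, is preserved in the iterative step to come (it will be, since $a' = b$ and the subsequent $b'$ is much larger than $(r-1)b$). The only genuine point of care is verifying that the second, error-type summand from Lemma \ref{lemma7.2} really can be absorbed; this is identical in spirit to the analogous absorption in Lemma \ref{lemma8.1} and relies on the explicit smallness of $c\del$ relative to $\tet$.
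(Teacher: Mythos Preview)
Your proposal is correct and follows essentially the same route as the paper: invoke Lemma \ref{lemma7.2} in place of Lemma \ref{lemma7.1}, absorb the error term $M^{-rH/(3s)}X^{\eta_{s+r}}$ using the smallness of $c\del$ relative to $\tet$, rearrange the $(X/M^b)^{(1-r/s)\eta_{s+r}}$ factor, and raise to the power $s/r$. The paper's proof is the same argument written slightly more tersely.
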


\begin{proof} We follow the argument of the proof of Lemma \ref{lemma8.1}, noting first that $X^{c\del}<M^{1/(6s)}$ and $M^{1/(6s)}>X^\del$. Then from Lemma \ref{lemma7.2} there exists an integer $h$ with $0\le h\le 2(\rho-1)b$ with the property that
\begin{align*}
\llbracket K_{a,b}(X)\rrbracket \ll &\, X^\del (X/M^b)^{(1-r/s)\eta_{s+r}}\left(M^{sg-(2s-r+1)h}\llbracket K_{b,\rho b+h}(X)\rrbracket\right)^{r/s}\\
&\, +M^{-r/(3s)}X^{\eta_{s+r}}.
\end{align*}
The hypothesised bound (\ref{9.1}) therefore implies that
\begin{align*}
X^{\eta_{s+r}(1+\psi \tet)}&\ll X^{(c+1)\del}M^{-\gam +rg-(2s-r+1)rh/s}(X/M^b)^{(1-r/s)\eta_{s+r}}\llbracket K_{b,\rho b+h}(X)\rrbracket^{r/s}\\
&\, +X^{\eta_{s+r}-\del},
\end{align*}
whence
$$X^{\eta_{s+r}(r/s+(\psi+(1-r/s)b\tet))}\ll X^{(c+1)\del}M^{-\gam +rg-(2s-r+1)rh/s}\llbracket K_{b,\rho b+h}(X)\rrbracket^{r/s}.$$
The conclusion of the lemma follows.
\end{proof}

\begin{lemma}\label{lemma9.2} Let $r$ be a natural number with $1\le r\le \min\{k-2,\frac{1}{2}k+1\}$. Define $s_0$ and $\kap$ as in (\ref{2.5}), and put $s=s_0$. Then $\eta_{s+r}=0$.
\end{lemma}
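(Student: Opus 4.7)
The plan is to mirror the proof of Lemma \ref{lemma8.2} almost exactly, substituting Lemma \ref{lemma9.1} for Lemma \ref{lemma8.1} and the exponent $\rho=k-r+1$ for $k$ wherever it governs the growth rate of the $b$-sequence. First I would invoke Lemma \ref{lemma6.1}, which combined with (\ref{2.16})--(\ref{2.18}) supplies an integer $h_{-1}\in\{0,1\}$ for which the base inequality $X^{\eta_{s+r}}\ll X^\del M^{-(2s-r+1)h_{-1}}\llbracket K_{0,1+h_{-1}}(X)\rrbracket$ holds. Assume for contradiction that $\eta_{s+r}>0$, and define recursively $a_0=0$, $b_0=1+h_{-1}$, and (after choosing $h_n\in[0,2(\rho-1)b_n]$ at each stage via Lemma \ref{lemma9.1}) $a_{n+1}=b_n$, $b_{n+1}=\rho b_n+h_n$, together with the auxiliary sequences $\psi_{n+1}=(s/r)\psi_n+(s/r-1)b_n$, $c_{n+1}=(s/r)(c_n+1)$, and $\gam_{n+1}=(s/r)\gam_n+(2s-r+1)h_n-sh_{n-1}$, starting from $\psi_0=0$, $c_0=1$, $\gam_0=(2s-r+1)h_{-1}$. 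Elementary induction gives $c_n\le 3(s/r)^n$ and $\gam_n\ge 0$, while $\psi_n\ge n(\rho-1)\rho^{n-1}$, and the congruence condition $b_n\ge (r-1)a_n$ required by Lemma \ref{lemma9.1} holds since $\rho\ge r-1$ whenever $r\le \tfrac{1}{2}k+1$.

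The key telescoping identity (analogue of (\ref{8.9})) is
\[
\gam_m-(2s-r+1)b_m+sb_{m-1}=-(s/r)^m(2s-2r+1),
\]
which follows by applying the recurrence for $\gam_{m+1}$ and using $s=r\rho$ to verify that the quantity transforms by a factor of $s/r$ at each step, together with the explicit evaluation at $m=1$. Given this identity, one proves inductively that $b_n<\sqrt{N}(s/r)^n$ and
\[
X^{\eta_{s+r}(1+\psi_n\tet)}\ll X^{c_n\del}M^{-\gam_n}\llbracket K_{a_n,b_n}(X)\rrbracket.
\]
At the inductive step, Lemma \ref{lemma9.1} delivers an integer $h$ with $0\le h\le 2(\rho-1)b_n$ verifying this relation at level $n+1$, with $b'=\rho b_n+h$. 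If $b'\ge \sqrt{N}(s/r)^{n+1}$, the telescoping identity and the computation $(2s-r+1)b'-sb_n=(2s-2r+1)\rho b_n+(2s-r+1)h$ yield $\gam'\ge (1-1/\sqrt{N})(2s-2r+1)b'$.

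The hard part is then verifying that the exponent on $M^{b'}$ obtained after applying Lemma \ref{lemma5.3} to kill $\llbracket K_{a',b'}(X)\rrbracket$ is strictly negative, i.e.\ that
\[
\kap-(1-1/\sqrt{N})(2s-2r+1)<0.
\]
With the new value $\kap=r\rho+r-(r-1)/(k-r)$ and $s=r\rho$, a direct calculation gives
\[
\kap-(2s-2r+1)=-r(k-r-2)-1-\frac{r-1}{k-r},
\]
which is at most $-1$ as soon as $r\le k-2$; the $1/\sqrt{N}$ slack is absorbed by taking $N$ large. The resulting contradiction with $\eta_{s+r}>0$ forces $b'<\sqrt{N}(s/r)^{n+1}$, closing the induction. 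Finally, applying Lemma \ref{lemma5.3} once more at stage $N$ (where $b_N\tet<1$) together with $\psi_N\ge N(\rho-1)\rho^{N-1}$ and the explicit value of $\tet$ from (\ref{2.9}) yields $\eta_{s+r}\ll N^{-1/2}$, so that $\eta_{s+r}=0$ upon letting $N\to\infty$. The only genuinely new checks compared with \S8 are the constant computation for $\kap-(2s-2r+1)$ and the verification that $\rho\ge r-1$ accommodates Lemma \ref{lemma9.1}'s hypothesis $b\ge (r-1)a$; everything else transposes verbatim.
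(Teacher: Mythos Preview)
Your proposal is correct and follows essentially the same approach as the paper's own proof: invoke Lemma~\ref{lemma6.1} for the base case, iterate Lemma~\ref{lemma9.1} with the same auxiliary sequences, use the telescoping relation (which the paper records as the inequality (\ref{9.3}) but which is in fact the equality you state) to bound $\gam'$, verify that $\kap-(1-1/\sqrt{N})(2s-2r+1)<0$ via the hypothesis $r\le k-2$, and close with Lemma~\ref{lemma5.3} at level $N$. The two verifications you flag as ``genuinely new'' --- the constant computation $\kap-(2s-2r+1)=-r(k-r-2)-1-(r-1)/(k-r)$ and the check $\rho\ge r-1$ ensuring $b_n\ge (r-1)a_n$ --- are precisely the points at which the paper's argument differs from \S8, and your treatment of both is correct.
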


\begin{proof} We follow the proof of Lemma \ref{lemma8.2}, supposing that $\eta_{s+r}>0$. We begin by observing that the discussion of the first paragraph of the proof of Lemma \ref{lemma8.2} remains valid in the present circumstances, and so we may take $h_{-1}$ to be an integer $h$ for which the relation (\ref{8.2}) holds. In this instance we define the sequences $(a_n)$, $(b_n)$, $(h_n)$ of non-negative integers for $0\le n\le N$ as follows. We put $a_0=0$ and $b_0=1+h_{-1}$. Then, when $0\le n<N$, we fix any integer $h_n$ with $0\le h_n\le 2(\rho-1)b_n$, and then define
\begin{equation}\label{9.2}
a_{n+1}=b_n\quad \text{and}\quad b_{n+1}=\rho b_n+h_n.
\end{equation}
The auxiliary sequences $(\psi_n)$, $(c_n)$, $(\gam_n)$ of non-negative real numbers are defined for $0\le n\le N$ by putting $\psi_0=0$, $c_0=1$, $\gam_0=(2s-r+1)h_{-1}$. Then for $0\le n<N$, we define $\psi_{n+1}$, $c_{n+1}$, $\gam_{n+1}$ in terms of $\psi_n$, $c_n$, $\gam_n$ by means of the respective formulae (\ref{8.4}), (\ref{8.5}) and (\ref{8.6}). We note that a straightforward induction again reveals $\gam_n$ to be non-negative for $n\ge 0$, just as in the proof of Lemma \ref{lemma8.2}. One has $s/r=\rho$, and hence one finds that
\begin{align*}
\gam_n&\ge (2s-r+1)h_{n-1}+\rho^n(\gam_0-(2s-r+1)h_{-1})\\
&\ge (2s-r+1)h_{n-1}\ge 0.
\end{align*}
We also have $c_n\le 3(s/r)^n$.\par

We claim that a choice may be made for the sequence $(h_n)$ in such a manner that for $0\le n\le N$, one has the upper bounds (\ref{8.7}) and (\ref{8.8}). As in our earlier discussion, these estimates hold for $n=0$ as a consequence of the definition of $b_0$ together with (\ref{8.2}). A comparison of the relations (\ref{9.2}) and (\ref{8.3}) reveals that the only adjustment necessary is to switch $k$ in (\ref{8.3}) to $\rho$ in (\ref{9.2}), though in present circumstances one has $s/r=\rho$. Thus we find as in the argument leading to (\ref{8.9}) that in the present situation, one has for $m\ge 1$ that
\begin{equation}\label{9.3}
\gam_m\ge (2s-r+1)b_m-sb_{m-1}-(2s-2r+1)\rho^m.
\end{equation}

\par Suppose now that the desired conclusions (\ref{8.7}) and (\ref{8.8}) have been established for the index $n<N$. Then one has $\rho b_n\tet<\rho (s/r)^{n-N-2}<\frac{1}{3}$, whence $b_n<\frac{1}{3}(\rho \tet)^{-1}$. Also, our hypotheses on $r$ ensure that
$$b_n\ge \rho b_{n-1}=(k-r+1)a_n\ge (r-1)a_n.$$
An application of Lemma \ref{lemma9.1} therefore leads from (\ref{8.8}) to the conclusion that there exists an integer $h$, with $h\le 2(\rho-1)b_n$, for which one has the upper bound (\ref{8.10}), where $a'$, $\psi'$, $c'$, $\gam'$ satisfy (\ref{8.11})--(\ref{8.14}), and in addition
\begin{equation}\label{9.4}
b'=\rho b_n+h.
\end{equation}

\par Suppose, if possible, that $b'\ge \sqrt{N}(s/r)^{n+1}=\sqrt{N}\rho^{n+1}$. Then as in the argument of the proof of Lemma \ref{lemma8.2} leading to (\ref{8.15}) above, we find that (\ref{8.14}) and (\ref{9.4}) together with (\ref{9.3}) show that
\begin{align}\gam'&=(s/r)\gam_n+(2s-r+1)(b'-\rho b_n)-s(b_n-\rho b_{n-1})\notag \\
&\ge (2s-2r+1)(b'-\rho^{n+1})\ge (1-1/\sqrt{N})(2s-2r+1)b'.\label{9.5}
\end{align}
But $b'=\rho b_n+h\le (3\rho-2)b_n<\tet^{-1}$, and so it follows from Lemma \ref{lemma5.3} that (\ref{8.16}) holds. Combining (\ref{9.5}), (\ref{8.16}) and (\ref{8.10}), therefore, we obtain the bound (\ref{8.17}). Observe next that in present circumstances, one deduces from (\ref{2.5}) that
\begin{align*}
\kap-(1-1/\sqrt{N})(2s-2r+1)&\le s+r-\frac{r-1}{k-r}-2s+2r-1+\frac{2s}{\sqrt{N}}\\
&<3r-\rho r-\tfrac{1}{2}=(r+2-k)r-\tfrac{1}{2}.
\end{align*}
Since, by assumption, we have $r\le k-2$, it follows that
$$\kap-(1-1/\sqrt{N})(2s-2r+1)\le -\tfrac{1}{2},$$
and thus we obtain again the relation (\ref{8.18}). From here, one deduces as before that $\eta_{s+r}<0$, contradicting our opening hypothesis, and leading us to conclude that in fact $b'<\sqrt{N}(s/r)^{n+1}$. We take $h_n$ to be the integer $h$ at hand, so that $b'=b_{n+1}$ and $\gam'=\gam_{n+1}$, and thereby deduce that (\ref{8.7}) and (\ref{8.8}) hold with $n$ replaced by $n+1$. This completes the proof of the present inductive step.\par

Next, since (\ref{8.7}) and (\ref{8.8}) both hold for $0\le n\le N$, one has $b_N\tet<(r/s)^2<1$, so that $b_N<\tet^{-1}$. From (\ref{9.2}) one has $b_n\ge \rho^n$. Since $s=r\rho$, one finds that
$$\psi_{n+1}=\rho \psi_n+(\rho-1)b_n\ge \rho \psi_n+(\rho -1)\rho^n,$$
so that $\psi_n\ge n(\rho-1)\rho^{n-1}$. An application of Lemma \ref{lemma5.3} therefore leads from (\ref{8.8}) to the upper bound
$$X^{\eta_{s+r}(1+\psi_N\tet)}\ll X^{\eta_{s+r}+(c_N+1)\del}(M^{b_N})^\kap\ll X^{\eta_{s+r}+k^2}.$$
But from (\ref{2.9}) we have $\tet=N^{-1/2}(r/s)^{N+2}$, and thus
$$\eta_{s+r}\le \frac{k^2}{\psi_N\tet}\le \frac{\sqrt{N}k^2(s/r)^{N+2}}{N(\rho-1)\rho^{N-1}}<\frac{k^2\rho^3}{\sqrt{N}}.$$
On taking $N$ sufficiently large in terms of $k$, we are able to make $\eta_{s+r}$ as small as we please. It follows that $\eta_{s+r}=0$, and this completes the proof of the lemma.
\end{proof}

The conclusion of Theorem \ref{theorem1.2} follows from Lemma \ref{lemma9.2}. The latter shows that when $t=\rho r+r=r(k-r+2)$, then one has
$$J_t(X)\ll X^{2t-(t-(r-1)/(k-r))+\eps}=X^{t+\nu_t+\eps},$$
in which $\nu_t=(r-1)/(k-r)$. When $s\le t$, meanwhile, one may apply H\"older's inequality to obtain
\begin{align*}
J_s(X)&=\oint |f_k(\bfalp;X)|^{2s}\d\bfalp \le \Bigl( \oint |f_k(\bfalp;X)|^{2t}\d\bfalp \Bigr)^{s/t}\\
&\ll (X^{t+\nu_t+\eps})^{s/t}\ll X^{s+\nu_t+\eps}.
\end{align*}
This completes the proof of Theorem \ref{theorem1.2} for $1\le r\le \min\{k-2,\frac{1}{2}k+1\}$.\par

We observe that when $k\ge 4$, the hypotheses of the statement of Theorem \ref{theorem1.2} are satisfied with $r=[(k+1)/2]$. In such circumstances, when $k=2l+1$ is odd, one has
$$r(k-r+2)=(l+1)(l+2)\ge (l+\tfrac{1}{2})^2+2l+1=\tfrac{1}{4}k^2+k,$$
and when $k=2l$ is even, one has
$$r(k-r+2)=l(l+2)=\tfrac{1}{4}k^2+k.$$
Meanwhile, one may easily verify that in each case the exponent $\nu_{r,k}$ satisfies
$$\nu_{r,k}=\frac{r-1}{k-r}\le 1.$$
The conclusion of Corollary \ref{corollary1.3} therefore follows directly from Theorem \ref{theorem1.2}.\par

Finally, suppose that $2\le r\le \min\{k-2,\tfrac{1}{2}k+1\}$, and put $t(r)=r(k-r+2)$. Then whenever $t(r-1)\le s\le t(r)$, it is a consequence of Theorem \ref{theorem1.2} that $J_{s,k}(X)\ll X^{s+\nu+\eps}$, where
$$\nu=\frac{r-1}{k-r}\le \frac{t(r-1)}{(k-r)(k-r+3)}\le \frac{4s}{k^2}.$$
Thus we see that the upper bound (\ref{1.3}) does indeed hold with a permissible exponent $\del_{s,k}$ satisfying $\del_{s,k}=O(s/k^2)$, thereby justifying the discussion following the statement of Theorem \ref{theorem1.2}.

\section{The asymptotic formula in Waring's problem} Our first application of the improved mean value estimate supplied by Theorem \ref{theorem1.1} concerns the asymptotic formula in Waring's problem. In this context, we define the exponential sum $g(\alp)=g_k(\alp;X)$ by
$$g_k(\alp;X)=\sum_{1\le x\le X}e(\alp x^k).$$
Also, we define the set of minor arcs $\grm=\grm_k$ to be the set of real numbers $\alp\in [0,1)$ satisfying the property that, whenever $a\in \dbZ$ and $q\in \dbN$ satisfy $(a,q)=1$ and $|q\alp-a|\le (2k)^{-1}X^{1-k}$, then $q>(2k)^{-1}X$. We begin by applying the methods of \cite{Woo2011b} to derive a mean value estimate restricted to minor arcs.

\begin{theorem}\label{theorem10.1} Suppose that $s\ge k^2-1$. Then for each $\eps>0$, one has
$$\int_\grm |g_k(\alp;X)|^{2s}\d\alp \ll X^{2s-k-1+\eps}.$$
\end{theorem}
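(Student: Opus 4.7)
The plan is to follow the scheme developed in \cite{Woo2011b} (already cited at this point of the paper) for minor-arc mean values, and to feed in the sharper Vinogradov input Theorem~\ref{theorem1.1} in place of the weaker mean-value bound used there. The overarching strategy is the classical Weyl/mean-value dichotomy, namely to decompose
$$\int_\grm |g_k(\alp;X)|^{2s}\d\alp \le \Bigl(\sup_{\alp\in \grm}|g_k(\alp;X)|\Bigr)^{2s-2u}\int_0^1 |g_k(\alp;X)|^{2u}\d\alp,$$
for a carefully chosen integer $u$ with $u\le s$, and then to balance a pointwise Weyl-type bound on the minor arcs against an $L^{2u}$ mean value estimate for $g_k$.

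First I would convert Theorem~\ref{theorem1.1} into a pointwise estimate of shape
$$\sup_{\alp\in \grm}|g_k(\alp;X)|\ll X^{1-\sigma(k)+\eps},$$
via the standard Vinogradov--Weyl conversion (as in \cite[Ch.~5]{Vau1997}), differencing $g_k$ so that the resulting mean square is dominated by $J_{s,k}(X)$ with $s\ge k^2-1$, then invoking Theorem~\ref{theorem1.1}. The sharper input here translates directly into a larger permissible $\sigma(k)$ than in previous incarnations. Next I would supply an $L^{2u}$ bound of the form $\int_0^1 |g_k(\alp;X)|^{2u}\d\alp \ll X^{2u-k+\eps}$ valid for all $u$ at least as large as some threshold $u_0$ with $u_0\le s$. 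This estimate follows from Theorem~\ref{theorem1.1} through the standard reduction from the $k$-dimensional Vinogradov system to the single Waring equation, as carried out in \cite{Woo2011b}, together with the trivial upward propagation $\int_0^1 |g_k|^{2u}\le X^{2(u-u_0)}\int_0^1 |g_k|^{2u_0}$.

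Inserting both inputs into the dichotomy yields
$$\int_\grm |g_k|^{2s}\d\alp \ll X^{(2s-2u)(1-\sigma(k))+2u-k+\eps}=X^{2s-k-2(s-u)\sigma(k)+\eps},$$
and the required bound $X^{2s-k-1+\eps}$ follows if one can select $u$ with $u_0\le u\le s$ and $2(s-u)\sigma(k)\ge 1+\eps$. The condition $s\ge k^2-1$ is precisely what makes such a choice possible: $s$ is large enough, relative to the threshold $u_0$ and the reciprocal of $\sigma(k)$ afforded by Theorem~\ref{theorem1.1}, to leave the needed slack $s-u$.

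The principal obstacle in executing this plan lies in the first step: extracting a quantitatively useful $\sigma(k)$ from the mean value Theorem~\ref{theorem1.1}, and then verifying that it meshes with $u_0$ so that the balancing condition $2(s-u)\sigma(k)\ge 1+\eps$ is compatible with $s\ge k^2-1$. This is the delicate bookkeeping that \cite{Woo2011b} is designed to handle, and once the improved exponent from Theorem~\ref{theorem1.1} is substituted into its iterative Weyl differencing scheme, the desired minor-arc estimate drops out.
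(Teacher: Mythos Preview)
Your plan follows the classical Weyl/mean-value dichotomy, but this route does not reach the stated threshold $s\ge k^2-1$. Trace through your balancing inequality. The Vinogradov--Weyl conversion, even with the sharpened input of Theorem~\ref{theorem1.1} applied at degree $k-1$, gives at best $\sig(k)^{-1}=2k(k-2)$ (this is exactly Theorem~\ref{theorem11.1}). On the mean-value side, the reduction you allude to (essentially Theorem~\ref{theorem10.3} with $m=1$, or Hua's device) yields $\int_0^1|g_k|^{2u}\d\alp\ll X^{2u-k+\eps}$ only once $u\ge u_0=k^2-1$; for smaller $u$ no such bound is available from Theorem~\ref{theorem1.1}. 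Your balancing condition $2(s-u)\sig(k)\ge 1$ then forces $s-u\ge k(k-2)$, and combined with $u\ge k^2-1$ this requires roughly $s\ge 2k^2-2k-1$. At the critical value $s=k^2-1$ there is no slack at all: you are forced to take $u=s$, and the minor-arc saving collapses to zero. So the approach proves the theorem only for $s$ about twice as large as claimed.

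The paper's proof bypasses the dichotomy entirely. It invokes \cite[Theorem~2.1]{Woo2011b}, which is a direct inequality
$$\int_\grm |g_k(\alp;X)|^{2s}\d\alp \ll X^{\frac{1}{2}k(k-1)-1}(\log X)^{2s+1}J_{s,k}(2X),$$
bounding the minor-arc moment of $g_k$ immediately by the full Vinogradov mean value $J_{s,k}$, with the saving of one power of $X$ on the minor arcs already built in. Substituting Theorem~\ref{theorem1.1} then gives $X^{2s-k-1+\eps}$ in one line. The point is that this inequality from \cite{Woo2011b} passes from $g_k$ to the Vinogradov system far more efficiently than splitting off a pointwise supremum, and it is this tool, not the classical dichotomy, that makes the threshold $s=k^2-1$ accessible.
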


\begin{proof} According to \cite[Theorem 2.1]{Woo2011b}, one has
$$\int_\grm|g_k(\alp;X)|^{2s}\d\alp \ll X^{\frac{1}{2}k(k-1)-1}(\log X)^{2s+1}J_{s,k}(2X).$$
Theorem \ref{theorem1.1} shows that when $s\ge k^2-1$, one has $J_{s,k}(2X)\ll X^{2s-\frac{1}{2}k(k+1)+\eps}$, and the conclusion of the theorem now follows.
\end{proof}

We transform the estimate supplied by this theorem into a less strident bound useful in handling the minor arc contribution in Waring's problem. For each natural number $k$, define the positive integer $s_0(j)=s_0(k,j)$ by means of the relation
$$s_0(k,j)=2k^2-2k-\frac{2(k-1)(j+1)-2^{j+1}}{k-j}.$$
We then put
\begin{equation}\label{10.1}
s_1(k)=\min_{\substack{0\le j\le k-2\\ 2^j\le k^2-k-1}}s_0(k,j).
\end{equation}

\begin{lemma}\label{lemma10.2} Suppose that $k$ is a natural number with $k\ge 3$. Then
$$\int_0^1|g_k(\alp;X)|^{s_1(k)}\d\alp \ll X^{s_1(k)-k+\eps}.$$
Moreover, when $s$ is a real number with $s>s_1(k)$, there exists a positive number $\del=\del(k,s)$ with the property that
$$\int_\grm|g_k(\alp;X)|^s\d\alp \ll X^{s-k-\del}.$$
\end{lemma}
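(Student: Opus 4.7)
The plan is to split $[0,1)=\grM\cup\grm$, with $\grM$ the complement of $\grm$, and to derive both assertions by combining Theorem \ref{theorem10.1} with Hua's lemma and Weyl's inequality.

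For the first assertion, on the major arcs the standard Hardy--Littlewood pruning procedure yields $\int_\grM|g_k(\alp;X)|^{s_1(k)}\d\alp\ll X^{s_1(k)-k}$, since $s_1(k)$ comfortably exceeds $2k$ for all $k\ge 3$. For the minor arc contribution, I would fix $j$ with $0\le j\le k-2$ and $2^j\le k^2-k-1$ attaining the minimum in (\ref{10.1}), so that $s_1(k)=s_0(k,j)$, and start from the algebraic identity
\begin{equation*}
(k-j)\, s_0(k,j)=2(k^2-1)(k-1-j)+2^{j+1},
\end{equation*}
which is an immediate rearrangement of the definition of $s_0(k,j)$. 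This exhibits $s_0(k,j)$ as the convex combination $\tfrac{k-1-j}{k-j}\cdot 2(k^2-1)+\tfrac{1}{k-j}\cdot 2^{j+1}$, so that H\"older's inequality with conjugate exponents $(k-j)/(k-1-j)$ and $k-j$ gives
\begin{equation*}
\int_\grm|g_k|^{s_0(k,j)}\d\alp\le\Bigl(\int_\grm|g_k|^{2(k^2-1)}\d\alp\Bigr)^{(k-1-j)/(k-j)}\Bigl(\int_0^1|g_k|^{2^{j+1}}\d\alp\Bigr)^{1/(k-j)}.
\end{equation*}
Theorem \ref{theorem10.1} bounds the first factor by $X^{2k^2-k-3+\eps}$, and Hua's lemma (applicable since $j+1\le k-1$) bounds the second by $X^{2^{j+1}-(j+1)+\eps}$; a brief calculation confirms that the resulting exponent simplifies to precisely $s_0(k,j)-k+\eps$.

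For the second assertion, take $s>s_1(k)$ and factor $|g_k|^s=|g_k|^{s-s_1(k)}\cdot|g_k|^{s_1(k)}$, extracting the excess via Weyl's inequality, which supplies $|g_k(\alp;X)|\ll X^{1-\sigma+\eps}$ uniformly on $\grm$ with $\sigma=2^{1-k}>0$. Combining this with the first assertion restricted to $\grm$ gives
\begin{equation*}
\int_\grm|g_k|^s\d\alp\le\Bigl(\sup_{\alp\in\grm}|g_k(\alp;X)|\Bigr)^{s-s_1(k)}\int_\grm|g_k|^{s_1(k)}\d\alp\ll X^{s-k-\sigma(s-s_1(k))+\eps},
\end{equation*}
and the claimed bound follows on choosing $\del=\tfrac{1}{2}\sigma(s-s_1(k))$. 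The only real technical point is the verification of the displayed algebraic identity together with the check that the range of $j$ permitted by (\ref{10.1}) is simultaneously compatible with the hypothesis $s\ge k^2-1$ of Theorem \ref{theorem10.1} and with the degree constraint $j+1\le k$ needed for Hua's lemma; both are immediate from the definition of $s_1(k)$.
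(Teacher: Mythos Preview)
Your proof is correct. The treatment of the first assertion matches the paper's: the same H\"older interpolation between the $2(k^2-1)$-th moment (Theorem \ref{theorem10.1}) and the $2^{j+1}$-th moment (Hua's lemma), with your weights $(k-1-j)/(k-j)$ and $1/(k-j)$ coinciding with the paper's exponents $a,b$ specialised to $s=s_1(k)$.

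For the second assertion you take a genuinely different route. The paper re-runs the same H\"older interpolation for each $s$ with $s_1(k)<s\le 2k^2-2$, obtaining
\[
\int_\grm |g_k(\alp;X)|^s\d\alp \ll X^{s-k-\nu+\eps},\qquad \nu=\frac{(k-j)(s-s_1(k))}{2k^2-2-2^{j+1}},
\]
and hence $\del$ of size $\gg (s-s_1(k))/k^2$. You instead establish the minor-arc bound at the single endpoint $s=s_1(k)$ and then peel off the excess $s-s_1(k)$ moments via the Weyl sup-norm bound on $\grm$. This is more elementary and handles all $s>s_1(k)$ uniformly without a case split at $2k^2-2$, but it yields $\del$ of order $2^{1-k}(s-s_1(k))$, exponentially smaller in $k$ than the paper's. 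Since the lemma only asserts the existence of \emph{some} positive $\del$, either approach suffices; the paper's choice would matter only if one wanted to track the power saving explicitly in later applications.
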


\begin{proof} The second estimate claimed in the lemma is immediate from Theorem \ref{theorem10.1} when $s\ge 2k^2-2$, on making use of the trivial estimate $|g_k(\alp;X)|\le X$. We suppose therefore that $s_1(k)<s\le 2k^2-2$, and we put $\tau=s-s_1(k)$. Let $j$ be an integer with $0\le j\le k-2$ and $2^j\le k(k-1)-1$ for which $s_1(k)=s_0(k,j)$. Then by H\"older's inequality, one has
$$\int_\grm|g(\alp)|^s\d\alp\le \Bigl( \int_\grm |g(\alp)|^{2k^2-2}\d\alp \Bigr)^a\Bigl( \int_0^1|g(\alp)|^{2^{j+1}}\d\alp \Bigr)^b,$$
where
$$a=\frac{s-2^{j+1}}{2k^2-2-2^{j+1}}\quad \text{and}\quad b=\frac{2k^2-2-s}{2k^2-2-2^{j+1}}.$$
An application of Theorem \ref{theorem10.1} in combination with Hua's lemma (see \cite[Lemma 2.5]{Vau1997}) therefore yields the bound
\begin{align*}
\int_\grm|g(\alp)|^s\d\alp &\ll X^\eps (X^{(2k^2-2)-k-1})^a(X^{2^{j+1}-j-1})^b\\
&\ll X^{s-k-\nu+\eps},
\end{align*}
where $\nu=a-(k-j-1)b$. A modicum of computation reveals that
$$\nu=\frac{(k-j)(s-s_1(k))}{2k^2-2-2^{j+1}}\ge \tau/(2k^2),$$
and so the second conclusion of the lemma therefore follows with $\del=\tau/(4k^2)$.\par

When $s=s_1(k)$, the above discussion shows that
\begin{equation}\label{10.2}
\int_\grm |g(\alp)|^s\d\alp \ll X^{s-k+\eps}.
\end{equation}
But on writing $\grM=[0,1)\setminus \grm$, the methods of \cite[Chapter 4]{Vau1997} confirm that whenever $s\ge k+2$, one has
$$\int_\grM |g(\alp)|^s\d\alp \ll X^{s-k}.$$
The first conclusion of the lemma follows by combining this estimate with the earlier bound (\ref{10.2}).
\end{proof}

The argument following the proof of \cite[Lemma 3.1]{Woo2011b} may now be adapted, without effort, to show that $\Gtil(k)\le [s_1(k)]+1$ for $k\ge 3$. The first conclusion of Theorem \ref{theorem1.5} consequently follows at once from the definition (\ref{10.1}). This upper bound for $\Gtil(k)$ is easily made explicit for smaller values of $k$. Thus, on taking $r=3$, one finds that for $k\ge 5$ one has
$$\frac{2(k-1)(r+1)-2^{r+1}}{k-r}=\frac{8k-24}{k-3}=8,$$
and on taking $r=4$, one finds that for $k\ge 6$ one has
$$\frac{2(k-1)(r+1)-2^{r+1}}{k-r}=\frac{10k-42}{k-4}=10-\frac{2}{k-4},$$
which is at least $9$ for $k\ge 6$, and exceeds $9$ for $k\ge 7$. Also, on taking $r=5$, one finds that
$$\frac{2(k-1)(r+1)-2^{r+1}}{k-r}=\frac{12k-76}{k-5}=12-\frac{16}{k-5},$$
a quantity which exceeds $10$ for $k\ge 14$. Thus we deduce that
$$\Gtil(6)\le 52,\quad \Gtil(k)\le 2k^2-2k-9\quad (7\le k\le 13)$$
and
$$\Gtil(k)\le 2k^2-2k-10\quad (k\ge 14).$$

\par An alternative to the above approach proceeds by means of the methods of Ford \cite{For1995}. Motivated by the notation introduced in (\ref{2.16}), we write
$$\llbracket J_{t,k}(Y)\rrbracket^*=Y^{\frac{1}{2}k(k+1)-2t}J_{t,k}(Y).$$
One may then rephrase \cite[Theorem 1]{For1995} in the following form.

\begin{theorem}\label{theorem10.3}
Let $m$ be an integer with $1\le m\le k$. Then for each natural number $s$ with $s\ge \frac{1}{2}m(m-1)$, one has
$$\int_0^1|g_k(\alp;X)|^{2s}\d\alp \ll X^{2s-k}\llbracket J_{s-\frac{1}{2}m(m-1),k}(X^{1/m})\rrbracket^*.$$
\end{theorem}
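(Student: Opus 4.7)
The statement rephrases Ford's Theorem~1 from \cite{For1995}, and my plan is to adapt the strategy of that paper. For the base case $m=1$, the bound is essentially classical. By orthogonality, the left hand side counts the number of $(\bfx,\bfy)\in[1,X]^{2s}$ with $\sum_i x_i^k=\sum_i y_i^k$. For any such solution, set $h_j=\sum_i x_i^j-\sum_i y_i^j$ for $1\le j\le k-1$; then $|h_j|\le sX^j$, so there are $O(X^{k(k-1)/2})$ possible $\bfh$. For each $\bfh$, a Fourier representation combined with the modulus bound gives
$$N(\bfh)=\int_{[0,1)^k}|f_k(\bfalp;X)|^{2s}e(-\bfh\cdot\bfalp)\d\bfalp\le J_{s,k}(X).$$
Summing over $\bfh$ yields $\int|g_k(\alp;X)|^{2s}\d\alp\ll X^{k(k-1)/2}J_{s,k}(X)$, which, since $\llbracket J_{s,k}(X)\rrbracket^*=X^{k(k+1)/2-2s}J_{s,k}(X)$, is precisely the $m=1$ instance of the claim.

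For general $m$, I would combine the above with a block decomposition exploiting the translation-dilation invariance of the Vinogradov system. Partition $[1,X]$ into $\lceil X^{1-1/m}\rceil$ blocks of length $Y=\lceil X^{1/m}\rceil$, and write each variable as $x=uY+v$ with $u\in[0,X/Y)$ and $v\in[1,Y]$. The binomial identity $(uY+v)^j=\sum_{l=0}^j\binom{j}{l}(uY)^{j-l}v^l$ rewrites the constraints $\sum_ix_i^j-\sum_iy_i^j=h_j$ as a coupled system involving the block indices $u_i$ and the residues $v_i$. Applying H\"older's inequality to isolate the residue contribution and invoking Lemma~\ref{lemma2.1}, the residue system yields a Vinogradov moment at scale $Y$.

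The reduction of the Vinogradov index from $s$ to $s-\tfrac12m(m-1)$ is the most delicate point and is where I expect the argument to become technical. It should arise from identifying $\tfrac12m(m-1)$ redundant constraints coming from the block-index configuration, which can be absorbed into the residue system through translation invariance. The prefactor $X^{2s-k}$, rather than the naive $X^{k(k-1)/2+2s(m-1)/m}$ obtained by combining the $m=1$ bound with the crude estimate $J_{s,k}(X)\le(X/Y)^{2s}J_{s,k}(Y)$, should emerge precisely from this refined absorption. The main obstacle is thus the combinatorial bookkeeping required to verify that exactly $\tfrac12m(m-1)$ variables can be saved and that the surplus powers of $X$ collapse into the claimed form.
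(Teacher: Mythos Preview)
The paper itself offers no proof of this statement: the sentence immediately preceding Theorem~\ref{theorem10.3} makes clear that it is simply a restatement of \cite[Theorem~1]{For1995}, and the paper cites that result without reproducing any argument. So there is no in-paper proof against which to measure your proposal.

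That said, your $m=1$ case is correct and cleanly done: the counting-over-$\bfh$ argument is exactly the standard route, and the arithmetic with $\llbracket J_{s,k}(X)\rrbracket^*$ checks out. For general $m$, your block decomposition at scale $X^{1/m}$ is indeed the philosophy behind Ford's argument, but your description of how the $\tfrac{1}{2}m(m-1)$ variables are saved is too vague to count as a proof. In Ford's treatment the key is an explicit polynomial identity (sometimes called Ford's identity) that, upon writing each variable in the form $x=u+vX^{1/m}+\ldots$ or an analogous $m$-adic-style expansion, produces a transformed system in which precisely $\binom{m}{2}$ of the Vinogradov constraints become redundant. Your phrase ``identifying $\tfrac12m(m-1)$ redundant constraints coming from the block-index configuration'' gestures at this, but the mechanism is not merely combinatorial bookkeeping: it rests on a specific algebraic identity, and without it the exponent $2s-k$ will not emerge. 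If you want to write a self-contained proof rather than cite Ford, you will need to supply that identity explicitly; otherwise, citing \cite{For1995} as the paper does is the honest option.
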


For each natural number $k$, we now consider integers $m$ and $t$ with $1\le m\le k$ and $1\le t\le k-1$, and we define $\Del_{t,k}$ as in (\ref{1.4}). We then put
$$s_2(k,m,t)=2k^2-2-\frac{2(t-1)(k+1)-m(m-1)}{1+\Del_{t,k}/m},$$
and set
$$s_3(k)=\underset{2(t-1)(k+1)+m(m-1)<2k^2-2}{\min_{1\le m\le k}\min_{1\le t\le k-1}}s_2(k,m,t).$$

\begin{lemma}\label{lemma10.4}
Suppose that $s$ and $k$ are natural numbers with $k\ge 3$ and $s>s_3(k)$. Then there exists a positive number $\del=\del(k,s)$ with the property that
$$\int_\grm |g_k(\alp;X)|^s\d\alp \ll X^{s-k-\del}.$$
\end{lemma}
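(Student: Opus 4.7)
\textbf{Plan for the proof of Lemma \ref{lemma10.4}.} The strategy mirrors that of Lemma \ref{lemma10.2}, but replaces Hua's lemma by the combination of Ford's Theorem \ref{theorem10.3} with our new bound, Theorem \ref{theorem1.4}. Fix a pair $(m,t)$, with $1\le m\le k$ and $1\le t\le k-1$, realising the minimum in the definition of $s_3(k)$, so that $s>s_2(k,m,t)$ and $2(t-1)(k+1)+m(m-1)<2k^2-2$. Write $\sig=(k-t)(k+1)+\tfrac{1}{2}m(m-1)$ and $\mu=\Del_{t,k}/m$.

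The first step is to establish the full-circle estimate
$$\int_0^1 |g_k(\alp;X)|^{2\sig}\,\d\alp \ll X^{2\sig-k+\mu+\eps}. \qquad (*)$$
To obtain this, Theorem \ref{theorem10.3} applied with parameter $\sig$ (which satisfies $\sig\ge \tfrac{1}{2}m(m-1)$) bounds the left side by $X^{2\sig-k}\llbracket J_{(k-t)(k+1),k}(X^{1/m})\rrbracket^*$. Since $(k-t)(k+1)\ge (k-t)(k+1)$, Theorem \ref{theorem1.4} (with its parameters $s=(k-t)(k+1)$ and $t=t$) furnishes the estimate $J_{(k-t)(k+1),k}(Y)\ll Y^{2(k-t)(k+1)-\tfrac{1}{2}k(k+1)+\Del_{t,k}+\eps}$, whence $\llbracket J_{(k-t)(k+1),k}(X^{1/m})\rrbracket^*\ll X^{\mu+\eps}$. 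Substituting gives $(*)$.

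The second step is a H\"older interpolation between $(*)$ and the minor arc estimate of Theorem \ref{theorem10.1}. For $2\sig\le s\le 2k^2-2$, write $a=(s-2\sig)/(2k^2-2-2\sig)$ and $b=1-a$, so that $a,b\in [0,1]$ and
$$\int_\grm |g_k(\alp;X)|^s\,\d\alp \le \Bigl(\int_\grm|g_k(\alp;X)|^{2k^2-2}\d\alp\Bigr)^a\Bigl(\int_0^1|g_k(\alp;X)|^{2\sig}\d\alp\Bigr)^b.$$
Theorem \ref{theorem10.1} bounds the first factor by $X^{2k^2-k-3+\eps}$; the second is handled by $(*)$. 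A short computation shows that the resulting exponent of $X$ is $s-k-(a-\mu b)+\eps$, and the inequality $a-\mu b>0$ is equivalent (after clearing denominators) to
$$s(1+\mu)>2\sig+\mu(2k^2-2),$$
which is precisely $s>s_2(k,m,t)$. Hence $a-\mu b$ exceeds a positive constant depending on $s-s_3(k)$, and choosing $\del$ slightly smaller delivers the claimed minor arc bound. The residual case $s>2k^2-2$ is trivial: one applies $|g_k(\alp;X)|\le X$ to peel off a factor $X^{s-(2k^2-2)}$ and uses Theorem \ref{theorem10.1} directly.

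The main obstacle, such as there is one, is verifying that the parameters $(m,t,\sig)$ yield a legitimate H\"older interpolation: one needs $2\sig\le s\le 2k^2-2$ and $\sig\le k^2-1$, and one must check that the hypotheses of Theorems \ref{theorem10.3} and \ref{theorem1.4} hold for the chosen $\sig$ and $(k-t)(k+1)$. The constraint $2(t-1)(k+1)+m(m-1)<2k^2-2$ imposed in the definition of $s_3(k)$ is exactly what keeps all of these ranges non-degenerate, so the interpolation proceeds without further issue, and the positive saving $\del=\del(k,s)$ emerges from the strict inequality $s>s_3(k)\ge s_2(k,m,t)$.
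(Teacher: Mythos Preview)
Your proposal is correct and follows essentially the same route as the paper's proof: the same H\"older interpolation between the exponent $2\sig=2(k-t)(k+1)+m(m-1)$ (bounded on the full circle via Theorem \ref{theorem10.3} composed with Theorem \ref{theorem1.4}) and the exponent $2k^2-2$ (bounded on $\grm$ via Theorem \ref{theorem10.1}), with the same computation $\nu=a-\mu b$ for the saving. Your final paragraph on checking the range $2\sig\le s\le 2k^2-2$ is a useful addition that the paper leaves implicit; note also that the tautology ``$(k-t)(k+1)\ge (k-t)(k+1)$'' in your first step should read as verifying the hypothesis $s\ge (k-t)(k+1)$ of Theorem \ref{theorem1.4} with equality.
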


\begin{proof} As in the proof of Lemma \ref{lemma10.2}, the desired conclusion is immediate from Theorem \ref{theorem10.1} when $s\ge 2k^2-2$, on making use of the trivial estimate $|g_k(\alp;X)|\le X$. We suppose therefore that $s_3(k)<s\le 2k^2-2$, and we put $\tau=s-s_3(k)$. Let $m$ and $t$ be integers with $1\le m\le k$, $1\le t\le k-1$ and $2(t-1)(k+1)+m(m-1)<2k^2-2$, for which $s_3(k)=s_2(k,m,t)$. Then by H\"older's inequality, one has
$$\int_\grm|g(\alp)|^s\d\alp\le \Bigl( \int_\grm |g(\alp)|^{2k^2-2}\d\alp \Bigr)^a\Bigl( \int_0^1|g(\alp)|^{2(k-t)(k+1)+m(m-1)}\d\alp \Bigr)^b,$$
where
$$a=\frac{s-2(k-t)(k+1)-m(m-1)}{2k^2-2-2(k-t)(k+1)-m(m-1)}$$
and
$$b=\frac{2k^2-2-s}{2k^2-2-2(k-t)(k+1)-m(m-1)}.$$

\par By applying Theorem \ref{theorem10.3} and Theorem \ref{theorem1.4} in sequence, one finds that
$$\int_0^1|g(\alp)|^{2(k-t)(k+1)+m(m-1)}\d\alp \ll X^{2(k-t)(k+1)+m(m-1)-k+\Del_{t,k}/m+\eps}.$$
Consequently, an application of Theorem \ref{theorem10.1} yields the bound
\begin{align*}
\int_\grm|g(\alp)|^s\d\alp &\ll X^\eps (X^{(2k^2-2)-k-1})^a(X^{2(k-t)(k+1)+m(m-1)-k+\Del_{t,k}/m})^b\\
&\ll X^{s-k-\nu+\eps},
\end{align*}
where
$$\nu=a-b\Del_{t,k}/m=\frac{\left(1+\Del_{t,k}/m\right)(s-s_3(k))}{2k^2-2-2(k-t)(k+1)-m(m-1)}\ge \tau/(2k^2).$$
The conclusion of the lemma therefore follows with $\del=\tau/(4k^2)$.
\end{proof}

The argument following the proof of \cite[Lemma 3.1]{Woo2011b} may again be adapted to show that $\Gtil(k)\le [s_3(k)]+1$ for $k\ge 3$. One can check by means of a direct computation that when $k=20$, if one takes $t=7$ and $m=9$, then $s_2(k,m,t)<748$, and in this way one obtains the bound $\Gtil(20)\le 748$. In view of the discussion following the proof of Lemma \ref{lemma10.2}, this completes the proof of Corollary \ref{corollary1.7}. Similarly, the conclusion of Corollary \ref{corollary1.6} follows on taking $t=2[k^{1/3}]$ and $m=[k^{2/3}]$, for then one finds that
$$\Del_{t,k}/m=\frac{\frac{1}{2}t(t-1)}{m}\left( \frac{k+1}{k-1}\right) =\frac{2k^{2/3}+O(k^{1/3})}{k^{2/3}+O(1)}=2+O(k^{-1/3}),$$
and hence
\begin{align*}s_2(k,m,t)&=2k^2-2-\frac{2k(2k^{1/3})-k^{4/3}+O(k)}{3+O(k^{-1/3})}\\
&=2k^2-k^{4/3}+O(k).
\end{align*}

\par We finish by noting that the proof of \cite[Theorem 4.2]{Woo2011b} may be adapted transparently so as to establish that when $s>\min\{ s_1(k),s_3(k)\}$, then the anticipated asymptotic formula holds for the number of integral solutions of the diagonal equation
$$a_1x_1^k+\ldots +a_sx_s^k=0,$$
with $|\bfx|\le B$. Here, the coefficients $a_i$ $(1\le i\le s)$ are fixed integers. Similar improvements may be wrought in upper bounds for $\Gtil^+(k)$, the least number of variables required to establish that the anticipated asymptotic formula in Waring's problem holds for almost all natural numbers $n$. Thus, one may adapt the methods of \cite[\S5]{Woo2011b} to show that
$$\Gtil^+(k)\le k^2-k+1-\max_{\substack{0\le j\le k-2\\ 2^j\le k^2-k-1}}\left\lceil \frac{(k-1)(j+1)-2^j}{k-j}\right\rceil $$
and
$$\Gtil^+(k)\le k^2-\underset{2(t-1)(k+1)+m(m-1)<2k^2-2}{\max_{1\le m\le k}\max_{1\le t\le k-1}}\left\lceil \frac{(t-1)(k+1)-\frac{1}{2}m(m-1)}{1+\Del_{t,k}/m}\right\rceil .$$

\section{Further applications} In this section we briefly discuss some applications of the mean value estimates supplied by Theorems \ref{theorem1.1} and \ref{theorem1.4}, with the aim of noting improvements made available over our previous work \cite{Woo2011a}. We begin with an analogue of Weyl's inequality.

\begin{theorem}\label{theorem11.1} Let $k$ be an integer with $k\ge 4$, and let $\bfalp\in \dbR^k$. Suppose that there exists a natural number $j$ with $2\le j\le k$ such that, for some $a\in \dbZ$ and $q\in \dbN$ with $(a,q)=1$, one has $|\alp_j-a/q|\le q^{-2}$ and $q\le X^j$. Then one has
$$f_k(\bfalp;X)\ll X^{1+\eps}(q^{-1}+X^{-1}+qX^{-j})^{\sig(k)},$$
where $\sig(k)^{-1}=2k(k-2)$.
\end{theorem}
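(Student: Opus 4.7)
Plan: The approach converts the mean-value estimate of Theorem \ref{theorem1.1} into a pointwise Weyl-type bound by a now standard transference argument, following the template of \cite[\S11]{Woo2011a} with the improved mean value from Theorem \ref{theorem1.1} in place of the weaker estimate used there.

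I would begin with a routine reduction via Dirichlet's theorem and dyadic pigeonholing, eliminating the regime in which $q+X^j|q\alp_j-a|$ is too small (where $\bfalp$ is genuinely major arc in the $j$-th coordinate and the exponential sum admits a direct evaluation as a Gauss sum times a smooth integral) or too large (where the claimed bound is implied by the trivial estimate $|f_k|\le X$). This confines attention to the balanced minor-arc regime in the $j$-th variable.

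Next comes the main analytic step. Taking $s=k^2-1$, the smallest value accessible through Theorem \ref{theorem1.1}, I would split the summation variable into residue classes modulo $q$, writing $x=qy+r$. The term $\alp_j x^j$ then decomposes as $(a/q)x^j$ (periodic modulo $q$ in $x$) plus a small polynomial correction governed by $|\alp_j-a/q|$. An application of H\"older's inequality to the resulting inner sum, together with Theorem \ref{theorem1.1} in the guise of the bound $J_{s,k}(X/q)\ll(X/q)^{2s-\frac{1}{2}k(k+1)+\eps}$, reduces the $(2s)$-th moment of $|f_k(\bfalp;X)|$ to a product of a mean-value factor and a count of Diophantine approximations governed by $\alp_j$. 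The latter is handled by the classical estimate
$$\sum_{1\le h\le H}\min(X,\|h\alp_j\|^{-1})\ll (H+q+X/q)\log(2qX),$$
which after taking a $(2s)$-th root contributes the factor $(q^{-1}+X^{-1}+qX^{-j})^{\sig(k)}$.

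The main obstacle lies in the careful bookkeeping needed to pin down the constant $\sig(k)^{-1}=2k(k-2)$. The mean-value bound supplies a net saving of $\tfrac{1}{2}k(k+1)$ in the exponent, but this must be balanced against the cost of the residue-class decomposition, the suboptimality inherent in extracting information from only the $j$-th coefficient of $\bfalp$, and the uniformity required across $j\in\{2,\ldots,k\}$. Optimising the auxiliary length parameter $H$ and choosing $s=k^2-1$ yields the stated $\sig(k)$ after a short but intricate computation, essentially identical in structure to that of \cite[\S11]{Woo2011a}; the improvement over the earlier paper stems solely from the enlarged range of admissible $s$ supplied by Theorem \ref{theorem1.1}.
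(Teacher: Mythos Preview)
Your proposal has a genuine gap: you apply Theorem \ref{theorem1.1} at degree $k$ with $s=k^2-1$, but this cannot yield $\sig(k)^{-1}=2k(k-2)$. The paper's proof (and indeed the template you cite from \cite[\S11]{Woo2011a}) instead invokes \cite[Theorem 5.2]{Vau1997}, which gives
$$f_k(\bfalp;X)\ll \bigl(J_{s,k-1}(2X)\,X^{\frac{1}{2}k(k-1)}(q^{-1}+X^{-1}+qX^{-j})\bigr)^{1/(2s)}\log(2X).$$
The crucial point is that a single Weyl differencing step is built into this inequality, reducing the degree by one, so the relevant mean value is $J_{s,k-1}$ and not $J_{s,k}$. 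One then applies Theorem \ref{theorem1.1} with $k$ replaced by $k-1$, for which the admissible threshold is $s\ge(k-1)^2-1=k(k-2)$. Taking $s=k(k-2)$ gives $J_{s,k-1}(2X)\ll X^{2s-\frac{1}{2}k(k-1)+\eps}$, and the exponent $\sig(k)=1/(2s)=1/(2k(k-2))$ drops out immediately.

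With your parameters the arithmetic cannot close: any argument that ultimately extracts a $(2s)$-th root with $s=k^2-1$ produces at best $\sig(k)^{-1}=2(k^2-1)$, strictly larger than $2k(k-2)$ for $k\ge 4$. Moreover, the mechanism you sketch (splitting $x$ into residue classes modulo $q$ and appealing to $J_{s,k}(X/q)$) is not how \cite[Theorem 5.2]{Vau1997} operates; there the Diophantine factor $(q^{-1}+X^{-1}+qX^{-j})$ arises from counting after differencing, not from a residue-class decomposition. To recover the stated exponent you must work with $J_{s,k-1}$ and take $s=k(k-2)$.
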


\begin{proof} Under the hypotheses of the statement of the theorem, we find that \cite[Theorem 5.2]{Vau1997} shows that for $s\in \dbN$, one has
$$f_k(\bfalp;X)\ll (J_{s,k-1}(2X)X^{\frac{1}{2}k(k-1)}(q^{-1}+X^{-1}+qX^{-j}))^{1/(2s)}\log (2X).$$
The conclusion of the theorem therefore follows on taking
$$s=(k-1)^2-1=k(k-2),$$
for in such circumstances Theorem \ref{theorem1.1} delivers the bound
$$J_{s,k-1}(2X)\ll X^{2s-\frac{1}{2}k(k-1)+\eps}.$$
\end{proof}

The proof of \cite[Theorem 1.6]{Woo2011a} may be easily adapted to deliver estimates depending on common diophantine approximations.

\begin{theorem}\label{theorem11.2} Let $k$ be an integer with $k\ge 4$, and let $\tau$ and $\del$ be real numbers with $\tau^{-1}>4k(k-2)$ and $\del>k\tau$. Suppose that $X$ is sufficiently large in terms of $k$, $\del$ and $\tau$, and further that $|f_k(\bfalp;X)|>X^{1-\tau}$. Then there exist integers $q$, $a_1,\ldots,a_k$ such that $1\le q\le X^\del$ and $|q\alp_j-a_j|\le X^{\del-j}$ $(1\le j\le k)$.\end{theorem}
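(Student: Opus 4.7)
The plan is to adapt the proof of Theorem~1.6 in Woo2011a, substituting Theorem~\ref{theorem11.1} for the Weyl-type inequality invoked there. For each index $j$ with $2\le j\le k$, I apply Dirichlet's approximation theorem with parameter $Q_j=X^{j-\del}$, producing coprime integers $a_j^*,q_j^*$ with $1\le q_j^*\le X^{j-\del}$ and $|q_j^*\alp_j-a_j^*|\le X^{\del-j}$. These satisfy the hypotheses of Theorem~\ref{theorem11.1} at index $j$ (since $|\alp_j-a_j^*/q_j^*|\le q_j^{*-2}$ and $q_j^*\le X^j$), which combined with $|f_k(\bfalp;X)|>X^{1-\tau}$ yields
\[
q_j^{*-1}+X^{-1}+q_j^*X^{-j}\gg X^{-2k(k-2)(\tau+\eps)}.
\]

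The hypothesis $\tau^{-1}>4k(k-2)$ forces $2k(k-2)\tau<\tfrac12$, so the middle term $X^{-1}$ is negligible for $X$ sufficiently large, and I conclude that either $q_j^*\ll X^{2k(k-2)(\tau+\eps)}$ or else $q_j^*\gg X^{j-2k(k-2)(\tau+\eps)}$. The large-denominator alternative is excluded by a continued-fraction uniqueness argument: such a $q_j^*$ would correspond to an exceedingly sharp rational approximation to $\alp_j$, but running Dirichlet at a second, coarser parameter $Q_j'=X^{2k(k-2)(\tau+\eps)}$ produces a competing small-denominator approximation, and the best-approximation property of Dirichlet's output forces these to coincide, contradicting the size of $q_j^*$.

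It then remains to consolidate the individual denominators $q_2^*,\ldots,q_k^*$, each of size at most $X^{2k(k-2)\tau+\eps}$, into a single common $q\le X^\del$ satisfying $|q\alp_j-a_j|\le X^{\del-j}$ for every $j$. My approach is to take $q=q_k^*$ (the denominator attached to the leading coefficient) and, for $j<k$, define $a_j$ as the nearest integer to $q\alp_j$. I then use the rigidity of the Dirichlet approximation $a_j^*/q_j^*$, combined with the numerical relation between $q$ and $q_j^*$, to promote the trivial bound $|q\alp_j-a_j|\le \tfrac12$ to the sharper $|q\alp_j-a_j|\le X^{\del-j}$. It is at precisely this stage that the quantitative hypothesis $\del>k\tau$ enters in a tight way, calibrating the scales so that the small denominators from the Weyl step and the required target precisions $X^{\del-j}$ are multiplicatively compatible.

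The main obstacle is this last consolidation step: producing a single $q\le X^\del$ compatible with all $k$ coefficients at their individual precisions $X^{\del-j}$. The denominators $q_j^*$ arising from different $j$'s have no obvious multiplicative relation, so the reconciliation rests on a careful descent from $j=k$ down to $j=1$, exploiting both the uniqueness of best rational approximations in the relevant regime and the shared structure imposed by the single underlying exponential sum $f_k(\bfalp;X)$. Once this delicate accounting succeeds, the conclusion of Theorem~\ref{theorem11.2} follows.
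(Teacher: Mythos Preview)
Your opening sentence is exactly right, and it is all the paper itself says: the proof of \cite[Theorem 1.6]{Woo2011a} goes through verbatim once one feeds in the exponent $\sig(k)^{-1}=2k(k-2)$ from Theorem~\ref{theorem11.1} in place of $2k(k-1)$. So at the level of ``adapt the earlier proof with the new Weyl input'' you are in agreement with the paper.

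Where your sketch goes wrong is in the reconstruction of that earlier proof. Applying Theorem~\ref{theorem11.1} \emph{separately} at each index $j$ and then trying to glue the resulting denominators $q_2^*,\ldots,q_k^*$ into a single $q$ does not work, and the specific mechanism you propose---take $q=q_k^*$ and hope that ``rigidity of Dirichlet approximation'' forces $\|q_k^*\alp_j\|\le X^{\del-j}$ for $j<k$---has no content. The $q_j^*$ arise from entirely independent applications of Dirichlet to unrelated real numbers $\alp_j$; nothing links them multiplicatively, and there is no uniqueness principle that makes $q_k^*\alp_j$ close to an integer. Your continued-fraction exclusion of the large-$q_j^*$ alternative is similarly not an argument: running Dirichlet at a second scale just produces a second, possibly different, approximation; it does not retroactively constrain the first.

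The proof in \cite{Woo2011a} does not assemble a common $q$ from separate pieces. It follows the classical route (going back to R.~C.~Baker's \emph{Diophantine Inequalities}): one uses Weyl differencing to pass from $f_k$ to sums of lower degree, and an inductive descent in $k$ then yields a \emph{single} $q$ that simultaneously approximates all the $\alp_j$. The Vinogradov mean value enters through the same inequality you quoted in Theorem~\ref{theorem11.1} (namely \cite[Theorem 5.2]{Vau1997}), and it is the exponent $2s=2k(k-2)$ from Theorem~\ref{theorem1.1} (applied with degree $k-1$) that drives the constants $4k(k-2)$ and $k\tau$. If you want to flesh out a proof rather than cite the adaptation, that inductive Weyl-differencing argument is what you should write down; your coefficient-by-coefficient plan cannot be repaired.
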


The proof of \cite[Theorem 1.7]{Woo2011a} likewise delivers the following result concerning the distribution modulo $1$ of polynomial sequences. Here, we write $\|\tet\|$ for ${\min}_{y\in \dbZ}|\tet-y|$.

\begin{theorem}\label{theorem11.3} Let $k$ be an integer with $k\ge 4$, and define $\tau(k)$ by $\tau(k)^{-1}=4k(k-2)$. Then whenever $\bfalp\in \dbR^k$ and $N$ is sufficiently large in terms of $k$ and $\eps$, one has
$$\min_{1\le n\le N}\|\alp_1n+\alp_2n^2+\ldots +\alp_kn^k\|<N^{\eps-\tau(k)}.$$
\end{theorem}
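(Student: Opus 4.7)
The plan is to argue by contradiction, combining a Fourier-analytic minorant argument with Theorem \ref{theorem11.2}. Suppose, against the theorem, that $\|\psi(n;\bfalp)\|\ge \eta:=N^{\eps-\tau(k)}$ for every $1\le n\le N$, writing $\psi(n;\bfalp)=\alp_1n+\ldots+\alp_kn^k$. I would first invoke a Vaaler-type trigonometric minorant: for any $H\in\dbN$ there is a polynomial $V(t)=\sum_{|h|\le H}v_he(ht)$ with $V(t)\le \chi_{(-\eta,\eta)}(t)$ on $\dbR/\dbZ$, $v_0\ge 2\eta-(H+1)^{-1}$, and $|v_h|\ll \min(\eta,1/|h|)$ for $h\ne 0$. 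Taking $H=\lfloor N^{\tau(k)-\eps/2}\rfloor$ ensures $H\eta\to\infty$, so $v_0\gg \eta$. The contradiction hypothesis forces $\sum_{n=1}^N V(\psi(n;\bfalp))\le 0$; expanding via Fourier and isolating the $h=0$ term then gives
$$v_0 N \ll \sum_{h=1}^H h^{-1}|f_k(h\bfalp;N)|,$$
so that there exists an integer $h_0\in[1,H]$ with $|f_k(h_0\bfalp;N)|\gg N^{1-\tau(k)+\eps/2}/\log N$.

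Next I would apply Theorem \ref{theorem11.2} to $h_0\bfalp$ with $\tau=\tau(k)-\eps/4$ (still satisfying $\tau^{-1}>4k(k-2)$) and with $\del$ chosen just above $k\tau$. The hypothesis $|f_k(h_0\bfalp;N)|>N^{1-\tau}$ is met for $N$ large, and Theorem \ref{theorem11.2} produces integers $q$ and $a_1,\ldots,a_k$ with $1\le q\le N^{\del}$ and $|qh_0\alp_j-a_j|\le N^{\del-j}$ for $1\le j\le k$. Setting $Q=qh_0$, which satisfies $Q\le N^{\del+\tau(k)-\eps/2}$, the identity $\alp_jQ^j=Q^{j-1}(Q\alp_j)$ combined with $Q^{j-1}a_j\in\dbZ$ yields $\|\alp_jQ^j\|\le Q^{j-1}N^{\del-j}$ for each $j$.

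Summing these estimates gives $\|\psi(Q;\bfalp)\|\le\sum_{j=1}^{k}Q^{j-1}N^{\del-j}$, and the exponent $j(\del-1)+(j-1)(\tau(k)-\eps/2)$ of the $j$-th term is decreasing in $j$ since $\del+\tau(k)<1$; the dominant term is thus $j=1$, with exponent $\del-1$. Because $\del$ may be taken arbitrarily close to $k\tau(k)=1/(4(k-2))$, while $(k+1)\tau(k)=(k+1)/(4k(k-2))<1$ for $k\ge 4$, the inequality $\del-1<\eps-\tau(k)$ is available with room to spare, yielding $\|\psi(Q;\bfalp)\|<N^{\eps-\tau(k)}$ for $N$ large. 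Since $1\le Q<N$, this contradicts the standing hypothesis and completes the proof. The main obstacle is parameter bookkeeping: the value $\del$ must be simultaneously large enough to meet the hypothesis $\del>k\tau$ of Theorem \ref{theorem11.2}, yet small enough that $Q<N$ and that $\del-1<\eps-\tau(k)$. The arithmetical inequality $(k+1)/(4k(k-2))<1$ is precisely what permits this balancing act, and is the quantitative reason why the threshold in Theorem \ref{theorem11.3} is determined by $\tau(k)^{-1}=4k(k-2)$.
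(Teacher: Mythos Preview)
Your argument is correct and follows precisely the standard route that the paper invokes by reference to \cite[Theorem 1.7]{Woo2011a}: a trigonometric minorant produces a large value of $f_k(h_0\bfalp;N)$ for some small $h_0$, Theorem \ref{theorem11.2} then yields simultaneous rational approximations with denominator $q$, and evaluating $\psi$ at $Q=qh_0$ gives the contradiction. The parameter bookkeeping you carry out (checking $(k+1)\tau(k)<1$ so that $\del$ can be chosen with $k\tau<\del<1-\tau(k)+\eps$ and $Q<N$) is exactly the content of the deduction, and your identification of the $j=1$ term as dominant is the right way to close the estimate.
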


In each of Theorems \ref{theorem11.2} and \ref{theorem11.3}, the exponent $4k(k-2)$ represents an improvement on the exponent $4k(k-1)$ made available in \cite[Theorems 1.6 and 1.7]{Woo2011a}. In \cite[Theorem 1.5]{Woo2011a}, meanwhile, we established a conclusion similar to that of Theorem \ref{theorem11.1}, though with a weaker exponent $\sig(k)$ satisfying $\sig(k)^{-1}=2k(k-1)$. As with this earlier work, our estimates supersede the Weyl exponent $\sig(k)=2^{1-k}$ when $k\ge 8$, and supersede work of Heath-Brown \cite{HB1988} and Robert and Sargos \cite{RS2000} for $k\ge 9$. When $k=8$, in fact, our exponent matches that of Heath-Brown \cite{HB1988}, though our conclusion is applicable for a substantially larger set of coefficients.\par

We turn next to Tarry's problem. When $h$, $k$ and $s$ are positive integers with $h\ge 2$, consider the Diophantine system
\begin{equation}\label{11.1}
\sum_{i=1}^sx_{i1}^j=\sum_{i=1}^sx_{i2}^j=\ldots =\sum_{i=1}^sx_{ih}^j\quad (1\le j\le k).
\end{equation}
Let $W(k,h)$ denote the least natural number $s$ having the property that the simultaneous equations (\ref{11.1}) possess an integral solution $\bfx$ with
$$\sum_{i=1}^sx_{iu}^{k+1}\ne \sum_{i=1}^sx_{iv}^{k+1}\quad (1\le u<v\le h).$$

\begin{theorem}\label{theorem11.4} When $h$ and $k$ are natural numbers with $h\ge 2$ and $k\ge 2$, one has $W(k,h)\le k^2-\sqrt{2}k^{3/2}+4k$.\end{theorem}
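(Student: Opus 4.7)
The plan is to compare two Vinogradov mean values. Focusing first on the case $h=2$, the standard reduction gives $W(k,2)\le s$ whenever $J_{s,k}(X) > J_{s,k+1}(X)$ for some sufficiently large $X$, since the former counts integer solutions of (\ref{11.1}) with $h=2$ and variables in $[1,X]$, while the latter counts those satisfying the additional constraint $\sum_i x_{i1}^{k+1} = \sum_i x_{i2}^{k+1}$. First I would apply Theorem \ref{theorem1.4} at parameter $t\in\{1,\ldots,k\}$ to $J_{s,k+1}(X)$, obtaining, for $s \ge (k+1-t)(k+2)$,
\[
J_{s,k+1}(X) \ll X^{2s - \frac{1}{2}(k+1)(k+2) + \Del_{t,k+1} + \eps},
\]
where $\Del_{t,k+1} = t(t-1)(k+2)/(2k)$. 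Combining with the lower bound $J_{s,k}(X) \gg X^{2s - \frac{1}{2}k(k+1)}$ coming from (\ref{1.1}), the desired strict inequality reduces to $\Del_{t,k+1} < k+1$, equivalently $t(t-1)(k+2) < 2k(k+1)$.

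Next I would optimise over $t$. Since $2k(k+1)/(k+2) = 2k - 2 + 4/(k+2)$, the choice $t = \lfloor \sqrt{2k}\rfloor$ suffices for all $k \ge 2$, yielding $t(t-1) \le 2k - \sqrt{2k}$, which is comfortably below the threshold. With this $t$, the minimal admissible $s$ is $(k+1-t)(k+2)$, and since $t \ge \sqrt{2k} - 1$ one computes
\[
(k+1-t)(k+2) \le (k+2-\sqrt{2k})(k+2) = k^2 + 4k + 4 - \sqrt{2}\,k^{3/2} - 2\sqrt{2k}.
\]
For $k \ge 2$ one has $4 \le 2\sqrt{2k}$, so this expression is at most $k^2 - \sqrt{2}\,k^{3/2} + 4k$, settling the bound $W(k,2) \le k^2 - \sqrt{2}\,k^{3/2} + 4k$.

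The hard part will be extending the estimate uniformly to all $h \ge 2$. For general $h$, I would count the number of $h$-way Tarry solutions of (\ref{11.1}) with pairwise distinct $(k+1)$-th power sums by subtracting, via a union bound over $\binom{h}{2}$ pairs, the contributions where at least one pair of $(k+1)$-th sums coincides. Each such contribution is a mixed mean value in which two designated groups contribute $f_{k+1}$ factors and the remaining $h-2$ groups contribute $f_k$ factors; an application of H\"older's inequality reduces the estimate to a combination of $J_{sh/2,k}(X)$ and $J_{sh/2,k+1}(X)$, both of which may in turn be controlled via Theorem \ref{theorem1.4} at the same optimised $t \approx \sqrt{2k}$. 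The principal technical difficulty I anticipate is arranging the H\"older exponents so that the $h$-dependence introduced by both the union bound and the $h$-way Tarry main term cancels precisely, so that the quantitative constraint on $s$ is governed entirely by the $h=2$ computation carried out above.
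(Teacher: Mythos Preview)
Your computation for $h=2$ is correct and matches the paper's argument line for line: apply Theorem~\ref{theorem1.4} to $J_{s,k+1}(X)$, combine with the lower bound (\ref{1.1}) for $J_{s,k}(X)$, optimise at $t=\lfloor\sqrt{2k}\rfloor$, and simplify.

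You have, however, misidentified the hard part. The extension to arbitrary $h\ge 2$ requires no new mean value estimates and no further use of Theorem~\ref{theorem1.4}. The single criterion $J_{s,k+1}(X)=o(J_{s,k}(X))$ already yields $W(k,h)\le s$ for \emph{every} $h\ge 2$, and this is precisely the reduction the paper imports from \cite[Theorem~1.3]{Woo2011a}. The argument is a pigeonhole: write $R(\mathbf n)$ for the number of $\mathbf x\in[1,X]^s$ with $\sum_i x_i^j=n_j$ for $1\le j\le k$, and $R_m(\mathbf n)$ for those satisfying in addition $\sum_i x_i^{k+1}=m$. If for every $\mathbf n$ there are at most $h-1$ values of $m$ with $R_m(\mathbf n)>0$, then Cauchy--Schwarz gives $\sum_m R_m(\mathbf n)^2\ge R(\mathbf n)^2/(h-1)$, and summing over $\mathbf n$ yields $J_{s,k+1}(X)\ge J_{s,k}(X)/(h-1)$, contradicting the hypothesis for large $X$. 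Hence some $\mathbf n$ admits at least $h$ distinct $(k+1)$-th power sums, and one representative tuple from each furnishes the desired $h$-way solution.

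Your proposed route through mixed mean values and H\"older is therefore unnecessary. It is also not clearly workable as written: for $h\ge 3$ the $h$-fold count $\sum_{\mathbf n}R(\mathbf n)^h$ is not a Vinogradov integral $J_{t,k}(X)$ for any $t$, so the claimed reduction of both main and error terms to $J_{sh/2,k}(X)$ and $J_{sh/2,k+1}(X)$ would need substantial further argument that you have not supplied.
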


\begin{proof} The argument of the proof of \cite[Theorem 1.3]{Woo2011a} shows that $W(k,h)\le s$ whenever $J_{s,k+1}(X)=o(J_{s,k}(X))$. Incorporating the bounds for $J_{s,k+1}(X)$ supplied via Theorem \ref{theorem1.4} into this argument, one finds that
$$W(k,h)\le (k+1-t)(k+2)$$
whenever
$$2s-\tfrac{1}{2}(k+1)(k+2)+\tfrac{1}{2}t(t-1)(1+2/k)<2s-\tfrac{1}{2}k(k+1),$$
a constraint equivalent to the condition
$$t(t-1)<\frac{2k(k+1)}{k+2}=2k-2+\frac{4}{k+2}.$$
By direct computation, one finds that this inequality is satisfied when $t=[\sqrt{2k}]$, but not for $t\ge \sqrt{2k}+1$. Thus we deduce that
\begin{align*}
W(k,h)&\le (k-[\sqrt{2k}]+1)(k+2)=k^2+3k-(k+2)[\sqrt{2k}]+2\\
&\le k^2-\sqrt{2}k^{3/2}+4k+4-2\sqrt{2k}.
\end{align*}
The conclusion of the theorem follows immediately.
\end{proof}

In \cite[Theorem 1.3]{Woo2011a}, we obtained the weaker bound $W(k,h)\le k^2+k-2$. We remark that the conclusion of Theorem \ref{theorem11.4} may be utilised to obtain an improvement in a result of Croot and Hart related to the sum-product theorem. When $A$ is a set of real numbers, write
$$A\cdot A=\{xy:\text{$x\in A$ and $y\in A$}\}$$
and
$$hA=\{x_1+\ldots +x_h:\text{$x_i\in A$ $(1\le i\le h)$}\}.$$

\begin{theorem}\label{theorem11.5}
Suppose that $h$ and $n$ are natural numbers with $h\ge 2$. Let $A$ be a set of $n$ real numbers. Then whenever $\eps$ is a positive number sufficiently small in terms of $h$, and $|A\cdot A|\le n^{1+\eps}$, there exists a positive number $\lam$ having the property that
$$|h(A\cdot A)|>n^{\lam h^{1/3}}.$$
\end{theorem}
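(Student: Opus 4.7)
The plan is to invoke the machinery of Croot and Hart essentially verbatim, substituting the improved bound on $W(k,h)$ supplied by Theorem \ref{theorem11.4}, and then optimising in the free parameter $k$. Their argument establishes the following general principle: any upper bound of the shape $W(k,h)\le s$ translates into a lower bound $|h(A\cdot A)|\ge n^{c(h)k}$ for finite sets $A\subset \dbR$ satisfying $|A\cdot A|\le n^{1+\eps}$, provided $\eps$ is chosen small enough in terms of $s$ that the multiplicative Pl\"unnecke--Ruzsa inequality keeps every iterated product set $A^{(m)}$, for $m$ of order $s$, of cardinality $n^{1+o(1)}$.

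The Croot--Hart construction starts from a Tarry solution $(x_{ij})_{1\le i\le s,\, 1\le j\le h}$ of degree $k$ furnished by Theorem \ref{theorem11.4}. By interpreting this solution as an algebraic identity satisfied by products of the form $\prod_{i=1}^s a_i^{x_{ij}}$ as $\bfa\in A^s$ varies, they exhibit many collisions between $h$-fold sums of elements of $A\cdot A$. Since each such product lies in an iterated product set of $A$ which, by the repeated application of Pl\"unnecke--Ruzsa, is of size at most $n^{1+Cm\eps}$ for an absolute constant $C$, a comparison of the total count $n^s$ of admissible tuples $\bfa$ against the number of distinct values of $P_1(\bfa)+\cdots+P_h(\bfa)\in h(A\cdot A)$ forces the latter to be at least $n^{c(h)k}$ for some positive $c(h)$.

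It remains to balance the two competing constraints on $s$: it must be large enough to admit a Tarry solution of degree $k$ (Theorem \ref{theorem11.4} providing $s\le k^2-\sqrt{2}k^{3/2}+4k$), but small enough relative to $\eps^{-1}$ for Pl\"unnecke--Ruzsa to remain effective. Taking $k=\lfloor \lambda_0 h^{1/3}\rfloor$ for a sufficiently small absolute constant $\lambda_0>0$ forces $s=O(h^{2/3})$. Provided $\eps$ is chosen small enough in terms of $h$, say $\eps\ll h^{-1}$, all the Pl\"unnecke estimates remain of the form $n^{1+o(1)}$, and the lower bound $n^{c(h)k}$ simplifies to the announced $n^{\lambda h^{1/3}}$ for a suitable $\lambda=\lambda(h)>0$.

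The principal obstacle is not analytic but one of bookkeeping: one must verify that the constant $c(h)$ produced by the Croot--Hart counting argument does not decay so quickly with $h$ as to spoil the final $h^{1/3}$ exponent, and that the cumulative $O(m\eps)$ slack in the iterated Pl\"unnecke estimates can be absorbed uniformly over the range of $m$ appearing. No new analytic input beyond Theorem \ref{theorem11.4} is required; the sum--product content lies entirely within the Croot--Hart framework, into which our improved Tarry bound is being inserted.
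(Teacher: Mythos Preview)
Your approach matches the paper's exactly: the paper supplies no proof of Theorem \ref{theorem11.5} at all, merely stating the result and noting that it sharpens Croot and Hart's exponent $(h/\log h)^{1/3}$ once the improved Tarry bound $W(k,h)\le k^2-\sqrt{2}\,k^{3/2}+4k$ of Theorem \ref{theorem11.4} is fed into their machinery. Your sketch is therefore already more detailed than what the paper offers, and the optimisation $k\asymp h^{1/3}$ (whence $s\asymp k^2\asymp h^{2/3}$) is the correct one, since the Croot--Hart counting argument effectively imposes a constraint of the shape $sk\lesssim h$ and returns an exponent of order $k$.
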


The aforementioned result of Croot and Hart (see \cite[Theorem 1.2]{CH2010}) delivers a similar conclusion, though with the exponent $h^{1/3}$ replaced by $(h/\log h)^{1/3}$.\par

\par We note also that on writing
$$\grS(s,k)=\sum_{q=1}^\infty \underset{(a_1,\ldots ,a_k,q)=1}{\sum_{a_1=1}^q\dots \sum_{a_k=1}^q}\Bigl|q^{-1}\sum_{r=1}^qe((a_1r+\ldots +a_kr^k)/q)\Bigr|^{2s}$$
and
$$\calJ(s,k)=\int_{\dbR^k}\Bigl| \int_0^1e(\bet_1\gam+\ldots +\bet_k\gam^k)\d\gam \Bigr|^{2s}\d\bfbet ,$$
the method of proof of \cite[Theorem 1.2]{Woo2011a} may be modified in the light of Theorem \ref{theorem1.1} to obtain the asymptotic formula
$$J_{s,k}(X)\sim \grS(s,k)\calJ(s,k)X^{2s-\frac{1}{2}k(k+1)},$$
provided only that $k\ge 3$ and $s\ge k^2$. In \cite[Theorem 1.2]{Woo2011a}, such a conclusion was obtained for $s\ge k^2+k+1$. A similar improvement holds also for work on the asymptotic formula in the Hilbert-Kamke problem.\par

Finally, write
$$F_k(\bfbet;X)=\sum_{1\le x\le X}e(\bet_kx^k+\bet_{k-2}x^{k-2}+\ldots +\bet_1x).$$
L.-K. Hua investigated the problem of bounding the least integer $C_k$ such that, whenever $s\ge C_k$, one has
$$\oint |f_k(\bfalp;X)|^s\d\bfalp \ll X^{s-\frac{1}{2}k(k+1)+\eps},$$
and likewise the least integer $S_k$ such that, whenever $s\ge S_k$, one has
$$\oint |F_k(\bfbet;X)|^s\d\bfbet \ll X^{s-\frac{1}{2}(k^2-k+2)+\eps}.$$

\begin{theorem}\label{theorem11.6} When $k\ge 3$, one has $C_k\le 2k^2-2$ and $S_k\le 2k^2-2k$.
\end{theorem}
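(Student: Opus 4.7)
The theorem consists of two claims which I would handle by distinct routes, both resting on the Vinogradov mean value estimates available from Theorem \ref{theorem1.1}.

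For $C_k \le 2k^2 - 2$, the even case is immediate. When $s = 2t$ with $t \ge k^2 - 1$, one has $\oint |f_k|^{2t}\d\bfalp = J_{t,k}(X)$, and Theorem \ref{theorem1.1} supplies $J_{t,k}(X) \ll X^{2t - \frac{1}{2}k(k+1) + \eps}$. For odd $s \ge 2k^2 - 1$, use the pointwise bound $|f_k(\bfalp;X)| \le X$ to write $\oint |f_k|^s\d\bfalp \le X \cdot J_{(s-1)/2,k}(X)$, then apply the even case to $s-1 \ge 2k^2 - 2$.

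For $S_k \le 2k^2 - 2k$, the task is to bound a mean value over $[0,1)^{k-1}$ of a Weyl-type sum missing the $x^{k-1}$ coefficient; the target exponent $\frac{1}{2}(k^2-k+2)$ is one more than $\frac{1}{2}k(k-1)$, suggesting a reduction to degree $k-1$. I would apply Weyl differencing once to write $|F_k(\bfbet;X)|^2 = \sum_{|u|\le X} G_u(\bfbet)$, where $G_u(\bfbet) = \sum_x e(\phi(x+u) - \phi(x))$ with $\phi(x) = \bet_k x^k + \bet_{k-2}x^{k-2} + \ldots + \bet_1 x$. Since the $x^k$ terms cancel, each $G_u$ is an exponential sum of degree $k-1$ in $x$, with coefficients linear in $\bfbet$. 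The power-mean inequality and integration over $\bfbet$ yield
\[
\oint |F_k|^{2t}\,\d\bfbet \ll X^{t-1}\Bigl(X^t + \sum_{0 < |u| \le X} \oint |G_u|^t\,\d\bfbet\Bigr).
\]
For each $u \ne 0$, a linear change of variables (in which $\bet_1$ contributes only to a phase, and $(\bet_2, \ldots, \bet_{k-2}, \bet_k)$ maps to the nontrivial frequencies of $G_u$) identifies $\oint |G_u|^{2s}\,\d\bfbet$ with a Vinogradov-type mean value at degree $k-1$. Applying Theorem \ref{theorem1.1} in that degree, valid for $s \ge (k-1)^2 - 1 = k^2 - 2k$, gives $J_{s,k-1}(X) \ll X^{2s - \frac{1}{2}k(k-1) + \eps}$. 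Summing over $|u| \le X$ and tracking the factors of $X$, one arrives at $\oint |F_k|^{2t}\,\d\bfbet \ll X^{2t - \frac{1}{2}(k^2 - k + 2) + \eps}$ for $2t \ge 2k(k-1)$. Odd $s$ is then treated by the same trivial-bound trick as in Part 1.

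The principal technical obstacle lies in the change-of-variables step: one must verify that the linear map from $(\bet_2, \ldots, \bet_{k-2}, \bet_k)$ to the coefficient vector of $G_u$ has the correct rank for generic $u$, and compute its Jacobian precisely enough to relate $\oint |G_u|^{2s}\,\d\bfbet$ to a definite multiple of $J_{s, k-1}(X)$. The accounting must be sharp: losing even a single factor of $X$ would weaken the conclusion back to the $S_k \le 2k^2 - 2$ bound obtained immediately from the direct estimate $\oint |F_k(\bfbet;X)|^{2t}\,\d\bfbet \ll X^{k-1} J_{t,k}(X)$ combined with Theorem \ref{theorem1.1} at degree $k$.
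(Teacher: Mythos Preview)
Your treatment of $C_k$ is fine and matches the paper's.

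For $S_k$, the approach has a genuine gap, and it is more severe than the obstacle you flag. Two issues compound. First, the power-mean step is already too lossy: the diagonal contribution $u=0$ alone gives $X^{t-1}\cdot X^t=X^{2t-1}$, which exceeds the target $X^{2t-\frac{1}{2}(k^2-k+2)+\eps}$ for every $k\ge 2$. Second, and this is the point you anticipated, the change of variables fails precisely because $F_k$ omits the $x^{k-1}$ term. After differencing, the coefficients $c_{k-1}=ku\bet_k$ and $c_{k-2}=\tbinom{k}{2}u^2\bet_k$ both depend on $\bet_k$ alone, so the linear map $(\bet_2,\ldots,\bet_{k-2},\bet_k)\mapsto(c_1,\ldots,c_{k-1})$ goes from $k-2$ variables to $k-1$ and has rank only $k-2$. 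Hence $\oint|G_u|^{2s}\d\bfbet$ is not $J_{s,k-1}(X)$ but the count of $\bfx,\bfy$ with $D_1=\cdots=D_{k-3}=0$ together with the single relation $(k-1)uD_{k-2}+2D_{k-1}=0$, where $D_j=\sum_i(x_i^j-y_i^j)$; this system has only $k-2$ constraints and is heuristically larger than $J_{s,k-1}$ by a factor $X^{k-2}$.

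The paper takes a different route. It quotes the inequality
\[
\oint|F_k(\bfbet;X)|^{2t}\d\bfbet\ll X^{k-2+\eps}J_{t,k}(2X)+X^{\eps-1}J_{t,k-1}(2X)
\]
from \cite[equation (10.10)]{Woo2011a}, and sets $t=k(k-1)$. The $J_{t,k-1}$ term is handled by Theorem~\ref{theorem1.1} at degree $k-1$. For the $J_{t,k}$ term one has $k(k-1)<k^2-1$, so Theorem~\ref{theorem1.1} does not apply directly; instead the paper H\"older-interpolates between $J_{(k-2)(k+1),k}$ (bounded via Theorem~\ref{theorem1.4} with parameter $t=2$, yielding $\Del=(k+1)/(k-1)$) and $J_{k^2-1,k}$ (via Theorem~\ref{theorem1.1}) to obtain $J_{k(k-1),k}(X)\ll X^{2k(k-1)-\frac{1}{2}k(k+1)+1+\eps}$. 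Thus Theorem~\ref{theorem1.4}, not just Theorem~\ref{theorem1.1}, is essential to the argument.
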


\begin{proof} The bound on $C_k$ is immediate from Theorem \ref{theorem1.1}. In order to establish the bound on $S_k$, we begin by observing that \cite[equation (10.10)]{Woo2011a} supplies the estimate
\begin{equation}\label{11.2}
\oint |F_k(\bfbet;X)|^{2t}\d\bfbet \ll X^{k-2+\eps}J_{t,k}(2X)+X^{\eps-1}J_{t,k-1}(2X).
\end{equation}
Write $u=(k-2)(k+1)$. Then an application of Theorem \ref{theorem1.4} with $t=2$ shows that
$$J_{u,k}(2X)\ll X^{2u-\frac{1}{2}k(k+1)+\Del},$$
with $\Del=(k+1)/(k-1)$. Consequently, on applying H\"older's inequality in combination with Theorem \ref{theorem1.1}, we obtain the bound
\begin{align*}
J_{k(k-1),k}(X)&\le \Bigl( \oint |f_k(\bfalp;X)|^{2u}\d\bfalp\Bigr)^{(k-1)/(k+1)}\Bigl( \oint |f_k(\bfalp;X)|^{2k^2-2}\d\bfalp\Bigr)^{2/(k+1)}\\
&\ll X^\eps (X^{2u-\frac{1}{2}k(k+1)+(k+1)/(k-1)})^{(k-1)/(k+1)}(X^{2k^2-2-\frac{1}{2}k(k+1)})^{2/(k+1)}\\
&\ll X^{2k(k-1)-\frac{1}{2}k(k+1)+1+\eps}.
\end{align*}
On the other hand, it follows from Theorem \ref{theorem1.1} that whenever $s\ge k(k-2)$, then one has
$$J_{s,k-1}(X)\ll X^{2s-\frac{1}{2}k(k-1)+\eps}.$$
On substituting these estimates into (\ref{11.2}), we conclude that
\begin{align*}
\oint |F_k(\bfbet;X)|^{2k(k-1)}\d\bfbet &\ll X^{2k(k-1)+\eps}(X^{1-\frac{1}{2}k(k+1)+(k-2)}+X^{-\frac{1}{2}k(k-1)-1})\\
&\ll X^{2k(k-1)-\frac{1}{2}(k^2-k+2)+\eps}.
\end{align*}
We therefore see that $S_k\le 2k(k-1)$, and this completes the proof of the theorem.
\end{proof}

For comparison, in \cite[Theorems 1.1 and 10.3]{Woo2011a} we derived the weaker bounds $C_k\le 2k^2+2k$ and $S_k\le 2k^2+2k-4$. When $k\ge 4$, the conclusion of Theorem \ref{theorem11.6} improves also on the bounds obtained by Hua \cite[Chapter 5]{Hua1965}, namely
$$C_3\le 16,\quad C_4\le 46,\quad C_5\le 110,\ldots $$
and
$$S_3\le 10,\quad S_4\le 32,\quad S_5\le 86,\ldots .$$
Moreover, Theorem \ref{theorem11.6} matches the bound established by Hua for $C_3$.

\bibliographystyle{amsbracket}

\begin{thebibliography}{18}

\bibitem{ACK2004}
G. I. Arkhipov, V. N. Chubarikov and A. A. Karatsuba, \emph{Trigonometric sums in number theory and analysis}, de Gruyter Expositions in Mathematics, \textbf{39}, Walter de Gruyter, Berlin, 2004.

\bibitem{AK1978}
G. I. Arkhipov and A. A. Karatsuba, \emph{A new estimate of an integral of I. M. Vinogradov}, Izv. Akad. Nauk SSSR Ser. Mat. \textbf{42} (1978), 751--762.

\bibitem{Bok1994}
K. D. Boklan, \emph{The asymptotic formula in Waring's problem}, Mathematika \textbf{41} (1994), 329--347.

\bibitem{BW2011}
K. D. Boklan and T. D. Wooley, \emph{On Weyl sums for smaller exponents}, Funct. Approx. Comment. Math. (to appear).

\bibitem{CH2010}
E. Croot and D. Hart, \emph{$h$-fold sums from a set with few products}, SIAM J. Discrete Math. \textbf{24} (2010), 505--519.

\bibitem{For1995}
K. B. Ford, \emph{New estimates for mean values of Weyl sums}, Internat. Math. Res. Notices (1995), 155--171.

\bibitem{HB1988}
D. R. Heath-Brown, \emph{Weyl's inequality, Hua's inequality, and Waring's problem}, J. London Math. Soc. (2) \textbf{38} (1988), 216--230.

\bibitem{Hua1965}
L.-K. Hua, \emph{Additive theory of prime numbers}, American Math. Soc., Providence, RI, 1965.

\bibitem{Par2009}
S. T. Parsell, \emph{On the Bombieri-Korobov estimate for Weyl sums}, Acta Arith. \textbf{138} (2009), 363--372.

\bibitem{RS2000}
O. Robert and P. Sargos, \emph{Un th\'eor\`eme de moyenne pour les sommes d'exponentielles. Application \'a l'in\'egalit\'e de Weyl}, Publ. Inst. Math. (Beograd) (N.S.) \textbf{67} (2000), 14--30.

\bibitem{Tyr1987}
O. V. Tyrina, \emph{A new estimate for a trigonometric integral of I. M. Vinogradov}, Izv. Akad. Nauk SSSR Ser. Mat. \textbf{51} (1987), 363--378.

\bibitem{Vau1986}
R. C. Vaughan, \emph{On Waring's problem for smaller exponents, II}, Mathematika \textbf{33} (1986), 6--22.

\bibitem{Vau1997}
R. C. Vaughan, \emph{The Hardy-Littlewood method}, Cambridge University Press, Cambridge, 1997.

\bibitem{VW1997}
R. C. Vaughan and T. D. Wooley, \emph{A special case of Vinogradov's mean value theorem}, Acta Arith. \textbf{79} (1997), 193--204.

\bibitem{Vin1947}
I. M. Vinogradov, \emph{The method of trigonometrical sums in the theory of numbers}, Trav. Inst. Math. Stekloff \textbf{23} (1947), 109pp.

\bibitem{Woo1992}
T. D. Wooley, \emph{On Vinogradov's mean value theorem}, Mathematika \textbf{39} (1992), 379--399.

\bibitem{Woo1994}
T. D. Wooley, \emph{Quasi-diagonal behaviour in certain mean value theorems of additive number theory}, J. Amer. Math. Soc. \textbf{7} (1994), 221--245.

\bibitem{Woo1996}
T. D. Wooley, \emph{Some remarks on Vinogradov's mean value theorem and Tarry's problem}, Monatsh. Math. \textbf{122} (1996), 265--273.

\bibitem{Woo2011a}
T. D. Wooley, \emph{Vinogradov's mean value theorem via efficient congruencing}, Annals of Math. (to appear), arXiv:1101.0574.

\bibitem{Woo2011b}
T. D. Wooley, \emph{The asymptotic formula in Waring's problem}, Internat. Math. Res. Notices (in press).
\end{thebibliography}
\providecommand{\bysame}{\leavevmode\hbox to3em{\hrulefill}\thinspace}

\end{document}